\newcommand{\R}{\mathbb{R}}
\newtheorem{theorem}{Theorem}[section]
\newtheorem{lemma}[theorem]{Lemma}
\newtheorem{corollary}[theorem]{Corollary}
\newtheorem*{definition*}{Definition}
\newtheorem{remark}[theorem]{Remark}
\newtheorem{proposition}[theorem]{Proposition}
\numberwithin{equation}{section}
\newcommand{\Sp}{\mathbb{S}}
\newcommand\K{\mathcal{K}^n}
\newcommand\vol{\mathrm{vol}}
\newcommand\KL{\mathrm{KL}}
\newcommand\MVal{{\bf MVal}^{SO(n)}}
\newcommand\Val{{\bf Val}}
\newcommand\costrans{\mathrm{C} \,}
\newcommand\trace{\mathrm{Tr}}
\newcommand\Kr{\mathcal{K}_{+}^2}
\newcommand\cuscite[1]{\textbf{\cite{#1}}}
\newcommand\odd[1]{#1_{\text{odd}}}
\newcommand\ev[1]{#1_{\text{ev}}}
\title{Minkowski Endomorphisms}
\author{Felix Dorrek}
\date{}
\begin{document}

\maketitle

\vspace{-0.8cm}

\begin{quote}
\footnotesize{ \vskip 1cm \noindent {\bf Abstract.}
Several open problems concerning Minkowski endomorphisms and Minkowski valuations are solved. More precisely, it is proved that all Minkowski endomorphisms are uniformly continuous but that there exist Minkowski endomorphisms that are not weakly-monotone. This answers questions posed repeatedly by Kiderlen \cuscite{Kiderlen06}, Schneider \cuscite{Schneider14} and Schuster \cuscite{Schuster07}. Furthermore, a recent representation result for Minkowski valuations by Schuster and Wannerer is improved under additional homogeneity assumptions. Also a question related to the structure of Minkowski endomorphisms by the same authors is answered. Finally, it is shown that there exists no McMullen decomposition in the class of continuous, even, $SO(n)$-equivariant and translation invariant Minkowski valuations extending a result by Parapatits and Wannerer \cuscite{ParWan13}.

  }
\end{quote}

\vspace{0.6cm}

\section{Introduction}

Let $\K$ denote the space of convex bodies (nonempty, compact, convex sets) in $\mathbb{R}^{n}$ endowed with the Hausdorff metric and Minkowski addition. Naturally, the investigation of structure preserving endomorphisms of $\K$ has attracted considerable attention (see e.g. \cuscite{ArtMil, Schneider74a, Schneider74b, Schneider74c, Schuster07, Kiderlen06}). In particular, in 1974, Schneider initiated a systematic study of continuous Minkowski-additive endomorphisms commuting with Euclidean motions. This class of endomorphisms, called Minkowski endomorphisms, turned out to be particularly interesting. 
\begin{definition*}
A Minkowski endomorphism is a continuous, $SO(n)$-equivariant and translation invariant map $\Phi: \K \to \K$ satisfying 
\[ \Phi(K + L) = \Phi(K) + \Phi(L), \qquad K, L \in \K. \]
\end{definition*}
Note that, in contrast to the original definition, we consider translation invariant instead of translation equivariant maps. However, it was pointed out by Kiderlen that these definitions lead to the same class of maps up to addition of the Steiner point map (see \cuscite{Kiderlen06} for details).
The main question of characterizing the (infinite dimensional) cone of Minkowski endomorphisms is a hard - yet interesting - one since it is intimately tied to the structure of $\K$. Schneider \cuscite{Schneider74b} established a complete classification in the case $n=2$. Since then a number of authors contributed further results and generalizations (see \cuscite{Kiderlen06, Schuster07, Schuster10, SchuWann15, SchuWann16}). In particular, Kiderlen obtained an important convolution representation. To state his result, recall that a convex body $K$ is uniquely determined by its \emph{support function} $h_K(u) = \max \{u \cdot x : \, x \in K \} $.

\begin{theorem}[\!\!\cuscite{Kiderlen06}] \label{thm:kiderlen_representation}
If $\Phi : \K \to \K$ is a Minkowski endomorphism, then there exists a unique zonal distribution $\nu \in C^{-\infty}_{o}(\Sp^{n-1})$ of order at most 2, called the generating distribution of $\Phi$,  such that 
\begin{equation} \label{eq:kiderlen}
 h_{\Phi \, K} = h_K \ast \nu  
\end{equation}
for every $K \in \K$. Moreover, $\Phi$ is uniformly continuous if and only if $\nu$ is a signed Borel measure.
\end{theorem}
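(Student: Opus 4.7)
The plan is to transfer the problem from Minkowski endomorphisms acting on $\K$ to a linear representation problem for $SO(n)$-equivariant operators on function spaces over $\Sp^{n-1}$, and then invoke the classical classification of such operators as convolutions with zonal distributions.

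First, I would define a linear operator $T$ on the real vector space $V$ of differences of support functions by setting $T(h_K - h_L) := h_{\Phi K} - h_{\Phi L}$. Minkowski-additivity of $\Phi$, together with the fact that support functions are additive under Minkowski sums (so that cancellation is immediate in the linear sense), guarantees that $T$ is well-defined and linear. Translation invariance of $\Phi$ is equivalent to $T$ annihilating the $n$-dimensional space of restrictions of linear functionals to $\Sp^{n-1}$, since $h_{K+t} - h_K$ is precisely the linear function $u \mapsto t \cdot u$; and $SO(n)$-equivariance of $\Phi$ translates into $T$ intertwining the natural $SO(n)$-action on $C(\Sp^{n-1})$.

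Next, I would show that $T$ extends continuously to an operator from $C^2(\Sp^{n-1})$ to $C(\Sp^{n-1})$. The key input is the standard regularity fact that any $f \in C^2(\Sp^{n-1})$ can be written as $f = h_K - c$, where $c > 0$ is a constant controlled by $\|f\|_{C^2}$ (viewed as the support function of a ball of radius $c$) and $K$ is a convex body: for $c$ large enough, the $1$-homogeneous extension of $f + c$ has nonnegative spherical Hessian and is therefore a support function. Combined with the Hausdorff continuity of $\Phi$ - equivalently, the continuity of $h_K \mapsto h_{\Phi K}$ in the sup norm on the cone of support functions - this yields the desired $C^2$-to-sup norm bound for $T$.

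With $T$ now a continuous, $SO(n)$-equivariant linear operator $C^2(\Sp^{n-1}) \to C(\Sp^{n-1})$ annihilating linear functions, I would invoke the classical representation of such operators by convolution with a zonal distribution: $T f = f \ast \nu$ for a unique $\nu$ of order at most $2$, zonal because $T$ commutes with rotations fixing a pole, and lying in $C^{-\infty}_o(\Sp^{n-1})$ precisely because $T$ kills degree-one spherical harmonics. Uniqueness of $\nu$ follows from the density of differences of support functions in $C^2(\Sp^{n-1})$. For the moreover part, $\Phi$ is uniformly continuous on $\K$ if and only if $T|_V$ is Lipschitz in the sup norm, which holds if and only if $T$ extends to a bounded operator $C(\Sp^{n-1}) \to C(\Sp^{n-1})$, and by the Riesz representation theorem this happens if and only if $\nu$ is a signed Borel measure. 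The main obstacle I expect is the careful quantitative decomposition of $C^2$-functions as differences of support functions (and the resulting norm bound), together with verifying that the translation-invariance condition matches exactly the meaning of the subscript in $C^{-\infty}_o(\Sp^{n-1})$.
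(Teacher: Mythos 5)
The paper attributes Theorem~\ref{thm:kiderlen_representation} to Kiderlen and gives no proof of it, so there is no in-paper argument to compare your proposal against. On its own terms, your outline is a faithful reconstruction of the standard proof: linearize $\Phi$ to a well-defined linear operator $T$ on differences of support functions (well-definedness and additivity from Minkowski additivity, positive homogeneity from additivity plus continuity), use the fact that $f+c$ is a support function whenever $f \in C^2(\Sp^{n-1})$ and $c$ is large relative to $\|f\|_{C^2}$ to push $T$ through to a bounded operator $C^2(\Sp^{n-1}) \to C(\Sp^{n-1})$, and then appeal to the classification of $SO(n)$-equivariant operators as zonal convolutions --- Schur's lemma applied degree-by-degree to spherical harmonics, with the $C^2$-to-$C$ bound controlling the multiplier growth and hence the order of $\nu$, and the annihilation of linear functions placing $\nu$ in $C^{-\infty}_o(\Sp^{n-1})$. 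Two places deserve more care in a complete write-up. First, ``Hausdorff continuity of $\Phi$'' by itself does not directly yield the operator bound; what you actually use is that $\Phi$ carries bounded (hence precompact) subsets of $\K$ to bounded subsets, so that $\|h_{\Phi K}\|$ is uniformly controlled over all $K$ with $h_K = f + c$ and $\|f\|_{C^2} \le 1$. This point is worth stressing precisely because the paper's Theorem~\ref{thm:main_theorem} shows that $T$ is in fact bounded in the sup norm --- i.e.\ $\nu$ is always a measure --- and that is a genuine theorem, not a formal consequence of continuity. Second, for the ``moreover'' equivalence you need uniform continuity of $\Phi$ taken globally on $\K$, not merely on bounded sets, in order for the extension of $T$ to be bounded on $C(\Sp^{n-1})$ and for the Riesz representation step to apply. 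With those clarifications your argument is sound and matches Kiderlen's strategy.
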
 
Here $C^{-\infty}_{o}(\Sp^{n-1})$ denotes the space of distributions vanishing on the restriction of linear functions to the sphere and $ h_K \ast \nu$ denotes the spherical convolution of the support function with the distribution $\nu$ (see \cuscite{Groemer}). While this theorem gives an explicit description of Minkowski endomorphisms, the important question of which distributions may occur as generating distributions remains open. In particular, it is not known whether all Minkowski endomorphisms are uniformly continuous. This was conjectured by several authors (see \cuscite{Schuster07, Kiderlen06} and \textbf{\cite[\textnormal{Chapter 3.3}]{Schneider14}}). With our first theorem, we confirm this conjecture in a slightly stronger form. For $K \in \K$, we denote the mean width of $K$ by $w(K)$.

\begin{theorem} \label{thm:main_theorem}
For every $n \geq 2$, there exists a constant $C_n \geq 0$ such that any Minkowski endomorphism $\Phi: \K \to \K$ is Lipschitz continuous with Lipschitz constant 
\[ c_{\Phi} \leq C_n \, w(\Phi B^n) .\]
\end{theorem}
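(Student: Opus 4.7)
The plan is to exploit Kiderlen's convolution representation~\eqref{eq:kiderlen} to reduce the Lipschitz estimate
\[
 d_H(\Phi K, \Phi L) \leq C_n\, w(\Phi B^n)\, d_H(K, L)
\]
to an operator-norm bound
\[
 \|(h_K - h_L) \ast \nu \|_{\infty} \leq C_n\, w(\Phi B^n)\, \|h_K - h_L\|_\infty
\]
on the generating distribution $\nu$ of $\Phi$. Because $h_{B^n} \equiv 1$ and $\nu$ is zonal, $1 \ast \nu$ is a constant equal to the total signed mass of $\nu$; hence $\Phi B^n$ is a centered ball and $w(\Phi B^n) = 2\,\nu(1)$. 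The theorem thus amounts to a quantitative comparison between the operator norm of $T_\nu\colon f \mapsto f \ast \nu$ (restricted to differences of support functions) and the signed mass $\nu(1)$.

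My first step would be to show that $\nu$ is actually a signed Borel measure, which by the second half of Theorem~\ref{thm:kiderlen_representation} is equivalent to $\Phi$ being uniformly continuous. I would identify the zonal distribution $\nu$ on $\Sp^{n-1}$ with its profile distribution on $[-1,1]$; since $\nu$ has order at most $2$, this profile can be written as the distributional second derivative of a signed Radon measure. The governing structural constraint is that $h_K \ast \nu$ is a support function (up to a linear term) for every $K \in \K$, equivalently that its $1$-homogeneous extension to $\R^n$ is convex.

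To turn this into quantitative information, I would evaluate the support-function constraint on a carefully chosen parametric family of test bodies --- spherical caps of varying opening angle around the pole, line segments, and smooth approximations of the identity --- and read off pointwise estimates on the profile of $\nu$. The target is to show both that the profile is of bounded variation (so that $\nu$ is a finite signed measure) and that $\|\nu\|_{\mathrm{TV}} \leq C_n\, \nu(1)$. Once in place, this yields
\[
 \|(h_K - h_L) \ast \nu\|_\infty \leq \|\nu\|_{\mathrm{TV}}\, \|h_K - h_L\|_\infty \leq C_n\, w(\Phi B^n)\, d_H(K,L),
\]
as required.

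The main obstacle is the quantitative control of the negative variation $\nu^-$ by the signed mass $\nu(1)$. As the paper itself demonstrates, Minkowski endomorphisms need not be weakly monotone, so $\nu^-$ need not vanish, and a bare $L^1$/$L^\infty$ duality cannot recover the sharp dependence. The convexity (positive semidefiniteness of the spherical Hessian $\nabla^2 + \mathrm{id}$) of $h_K \ast \nu$ for every support function $h_K$ must therefore be exploited in a geometric fashion, most plausibly by localizing test bodies near the singular support of $\nu^-$ and averaging over the zonal orbit. The dimensional constant $C_n$ enters precisely at this step and quantifies the loss incurred by non-monotonicity.
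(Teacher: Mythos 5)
Your reduction to the operator-norm bound $\|\nu\|_{\mathrm{TV}} \leq C_n\,\nu(1)$ is exactly the right target, and you correctly identify it as the hard part. But the proposal has a genuine gap there: it offers no concrete mechanism to obtain the total-variation bound, and the heuristics you suggest (test against caps, segments, approximations of the identity, ``localize near the singular support of $\nu^-$ and average'') do not obviously produce one. The constraint ``$h_K\ast\nu$ is a support function for all $K$'' is a convexity constraint that is awkward to dualize directly, and since non-weakly-monotone endomorphisms exist, you cannot hope to show $\nu^-=0$; you need a structural reason why $\nu^-$ is dominated by $\nu^+$, and the proposal never supplies it.

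The paper's proof circumvents this by \emph{linearizing} the constraint before dualizing. Applying $\Box_n$ to $h_{\Phi K} = h_K\ast\nu$ and using $s_1(K,\cdot)=\Box_n h_K$, one obtains the necessary condition $\int_{\Sp^{n-1}} s_1(K,u)\,d\mu(u)\geq 0$ for all smooth bodies of revolution $K$ (Lemma~\ref{first_area_necessary}). This places the generating measure in the dual cone $\mathcal{S}^\ast$ of the cone $\mathcal{S}$ of first area-measure densities of bodies of revolution. The crucial technical input --- absent from your proposal --- is Firey's Theorem~\ref{thm:firey_area_measures_revolution}, which gives an explicit, checkable description of $\mathcal{S}$ via integral inequalities. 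Combined with the bipolar theorem (Theorem~\ref{double_dual}), this lets the author identify $\mathcal{S}^\ast$ as the closed conic hull of two explicit one-parameter families of measures $\tau_{\alpha,o},\sigma_{\beta,o}$, on each of which the ratio $\|\cdot\|_{\mathrm{TV}}/(\cdot)(\Sp^{n-1})$ is bounded by an explicit dimensional constant. Lower semicontinuity of total variation then extends the bound to all of $\mathcal{S}^\ast$, hence to all generating measures. Finally, there is a second point your ordering of steps misses: the paper does \emph{not} first show $\nu$ is a measure and then bound it. It proves the uniform Lipschitz estimate for \emph{smooth} endomorphisms (whose generating distributions are automatically smooth functions), and then transfers it to arbitrary $\Phi$ by approximation with smooth endomorphisms (density in $\MVal$). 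Only afterward does one conclude, via the second half of Theorem~\ref{thm:kiderlen_representation}, that every generating distribution is a measure. Trying to establish measure-ness first, as you propose, puts you back at the original open problem with no new leverage.
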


As a consequence, we conclude that every Minkowski endomorphism is generated by a measure; providing a stronger form of Theorem \ref{thm:kiderlen_representation}. \\

An important class of endomorphisms that are completely characterized is that of weakly monotone Minkowski endomorphisms. We recall that the \emph{Steiner point} of a convex body $K \in \K$ is defined by $s(K) = \frac{1}{\vol(B^n)} \, \int_{\Sp^{n-1}} h_K(u) u \, du$.

\begin{definition*}
A Minkowski endomporphism $\Phi$ is called \emph{weakly monotone} if and only if it is monotone (w.r.t. set-inclusion) on the set of all convex bodies with Steiner point at the origin.
\end{definition*}

Let $\mathcal{M}_o(\Sp^{n-1})$ denote the space of all signed Borel measures on $\Sp^{n-1}$ having their center of mass at the origin. The following theorem by Kiderlen completely characterizes weakly monotone Minkowski endomorphisms.

\begin{theorem}[\!\cuscite{Kiderlen06}] \label{thm:kiderlen_positive}
Let $\Phi: \K \to \K$ be a Minkowski endomorphism. Then $\Phi$ is weakly monotone if and only if it is generated by a measure $\mu \in \mathcal{M}_o(\Sp^{n-1})$, that is the orthogonal projection of a non-negative measure $\nu \in \mathcal{M}(\Sp^{n-1})$ to $\mathcal{M}_o(\Sp^{n-1})$. Moreover, any such measure $\mu \in \mathcal{M}_o(\Sp^{n-1})$ generates a weakly-monotone Minkowski endomorphism.
\end{theorem}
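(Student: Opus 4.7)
The plan is to prove the equivalence in each direction separately, obtaining the ``moreover'' clause as a byproduct of the sufficient direction.

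For the sufficient direction, suppose $\mu = \pi(\nu)$ for some non-negative $\nu \in \mathcal{M}(\Sp^{n-1})$. By the definition of the orthogonal projection, the measure $\nu - \mu$ takes the form $\ell \cdot du$ for some linear function $\ell$ on $\Sp^{n-1}$. Averaging $\nu$ over the stabilizer $SO(n-1)$ of $e_n$ preserves both non-negativity and the projection $\mu$, so one may assume $\nu$ is zonal; then $\ell$ is zonal too, i.e., a multiple of $u \mapsto \langle e_n, u \rangle$. For any $K \in \K$ with $s(K) = 0$, the support function $h_K$ is $L^2$-orthogonal to the space $Y_1$ of degree-one spherical harmonics, and the multiplier property of spherical convolution therefore forces $h_K \ast (\ell \cdot du) = 0$, giving $h_K \ast \mu = h_K \ast \nu$. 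Because $\nu$ is a non-negative zonal measure, $h_K \ast \nu$ is realized as a Minkowski integral of rotates of $K$, so it is a support function and the assignment $h \mapsto h \ast \nu$ preserves pointwise inequalities. Hence $\mu$ does generate a Minkowski endomorphism, and $K \subseteq L$ with $s(K) = s(L) = 0$ forces $\Phi K \subseteq \Phi L$, proving weak monotonicity.

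For the necessary direction, the M.~Riesz extension theorem will produce a non-negative measure $\tilde\nu$ with $\pi(\tilde\nu) = \mu$. The first step is to verify
\[ \langle \mu, f \rangle \geq 0 \qquad \text{for every smooth } f \geq 0 \text{ with } f \perp Y_1. \]
Given such $f$, choose $c > 0$ large enough that $D^2(f+c) + (f+c) I$ is positive semi-definite on $\Sp^{n-1}$, so that $f + c = h_L$ for a smooth convex body $L$. Since both $f$ and constants are orthogonal to $Y_1$, one has $s(L) = 0$; since $f \geq 0$, one has $cB^n \subseteq L$. Weak monotonicity of $\Phi$ gives $h_L \ast \mu \geq h_{cB^n} \ast \mu$, and evaluating at $e_n$ and using the zonality of $\mu$ to identify $(f \ast \mu)(e_n)$ with $\langle \mu, f \rangle$ yields the inequality. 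Applying the M.~Riesz extension theorem to the positive linear functional $\langle \mu, \cdot \rangle$ on the subspace $V \cap C^\infty(\Sp^{n-1})$ of $C^\infty(\Sp^{n-1})$---which is cofinal in $C^\infty$, since every function is dominated by its supremum constant, and constants belong to $V$---produces a positive linear extension $\tilde T$ on $C^\infty$. Positivity forces the automatic bound $|\tilde T(f)| \leq \|f\|_\infty \tilde T(\mathbf{1})$, so $\tilde T$ extends by continuity to a positive bounded functional on $C(\Sp^{n-1})$ represented by a non-negative Radon measure $\tilde\nu$ via Riesz--Markov. Since $\tilde \nu$ and $\mu$ coincide on $V$ while $\mu$ vanishes on $Y_1$, the difference $\tilde\nu - \mu$ is a signed measure of the form $\ell' \cdot du$ for some linear $\ell'$, whence $\mu = \pi(\tilde\nu)$. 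As a byproduct, $\mu$ is a signed measure, so weak monotonicity implies uniform continuity a priori.

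The principal subtlety is that $\mu$ is, at this stage of the theory, only a distribution of order up to $2$, so $\langle \mu, f \rangle$ is directly defined only for smooth $f$; the M.~Riesz extension theorem sidesteps the continuity issue by working on $C^\infty$ and exploiting the automatic sup-norm bound produced by positivity to obtain a measure representation. The geometric heart of the argument is the observation that adding a constant preserves the degree-one component of a function, so that the Steiner point of the body whose support function is $f+c$ remains at the origin; this is precisely why the characterization of weakly monotone Minkowski endomorphisms involves the orthogonal projection onto $\mathcal{M}_o(\Sp^{n-1})$ rather than some larger subspace of measures.
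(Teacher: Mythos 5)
This theorem is stated with attribution to Kiderlen \cuscite{Kiderlen06} and is not proved in the present paper, so there is no in-text argument to compare against; I can only evaluate your proposal on its own merits and against what is natural for this result.

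Your argument is correct. The sufficient direction is the standard ``rotation mean'' argument: for a non-negative zonal $\nu$, the operator $h \mapsto h \ast \nu$ is a weighted Minkowski average over $SO(n)$-orbits, hence carries support functions to support functions and preserves pointwise order; the reduction from $\mu = \pi(\nu)$ to $\nu$ itself by averaging over $SO(n-1)$ and killing the degree-one harmonic is exactly right, and the extension from bodies with $s(K)=0$ to arbitrary $K$ is handled (implicitly) by $\mu \perp Y_1$. The necessary direction via a positive-extension theorem is the natural route and almost certainly the one Kiderlen used: the only real content is (i) the observation that weak monotonicity, applied to the pair $cB^n \subseteq L$ with $h_L = f+c$, yields $\langle \mu, f\rangle \ge 0$ on the cone of non-negative smooth functions in $Y_1^{\perp}$, which is cofinal in $C^\infty(\Sp^{n-1})$ because constants lie in $Y_1^{\perp}$; and (ii) the automatic sup-norm bound that positivity gives, which upgrades the Riesz extension from $C^{\infty}$ to a Radon measure $\tilde\nu$ via Riesz--Markov. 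Your handling of the order-$\le 2$ distributional nature of $\mu$ (pairing only against smooth $f$, recovering measure-ness only at the end) is the correct way to avoid circularity. One step worth spelling out, which you treat a bit tersely, is why $\tilde\nu - \mu$ must be of the form $\ell' \cdot du$: this follows because $C^{\infty}(\Sp^{n-1}) = Y_1 \oplus (Y_1^{\perp}\cap C^{\infty})$ with $Y_1$ finite-dimensional, so a distribution annihilating $Y_1^{\perp}\cap C^{\infty}$ is determined by a linear functional on $Y_1$ and hence representable by an $L^2$-density in $Y_1$. With that made explicit, the argument is complete and, as you note, also re-derives the measure-regularity (hence uniform continuity) of weakly monotone endomorphisms as a byproduct.
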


Interestingly, weakly monotone Minkowski endomorphisms are the only known examples of Minkowski endomorphisms so far. Also, from Schneiders characterization for $n=2$ it follows that all endomorphisms are weakly monotone in that case. The natural question whether Minkowski endomorphisms are weakly monotone in general already implicitly appeared in \cuscite{Kiderlen06}. Later it was stressed by Schneider and Schuster (see \cuscite{Schuster07} and \textbf{\cite[\textnormal{Chapter 3.3}]{Schneider14}}). A positive answer would clearly yield a complete characterization of Minkowski endomorphisms by Theorem \ref{thm:kiderlen_positive}. However, in this article we prove the following:

\begin{theorem} \label{thm:main_theorem1}
For every $n \geq 3$, there exist Minkowski endomorphisms $\Phi: \K \to \K$ that are not weakly monotone.
\end{theorem}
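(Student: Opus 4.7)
By Theorem~\ref{thm:main_theorem} combined with Theorem~\ref{thm:kiderlen_representation}, every Minkowski endomorphism is generated by a zonal signed Borel measure $\mu \in \mathcal{M}_o(\Sp^{n-1})$, and by Theorem~\ref{thm:kiderlen_positive} it is weakly monotone precisely when $\mu$ is the orthogonal projection of some non-negative measure. The plan is thus to exhibit an explicit zonal measure $\mu$ that (a) generates a Minkowski endomorphism but (b) is not the projection of any non-negative measure.

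I would first reformulate (b) concretely. A zonal measure $\mu$ is determined by the density $f$ of its push-forward to $[-1,1]$, and since any non-negative pre-image may be replaced by its $SO(n-1)$-average without affecting the projection, condition (b) is equivalent to requiring that for every $\beta \in \R$ the function $t \mapsto f(t) + \beta t$ is negative somewhere on $[-1,1]$. A convenient sufficient condition is that $f$ is strictly negative at both endpoints $t = \pm 1$: any $\beta \ge 0$ drives $f(-1) - \beta$ negative, while any $\beta \le 0$ drives $f(1) + \beta$ negative. Choosing $f$ even in $t$ also yields $\mu \in \mathcal{M}_o$ automatically, since the defining odd moment $\int t f(t)\,d\sigma(t)$ then vanishes by parity.

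The construction I would try is a perturbation $f = f_0 - \epsilon\,(b(t)+b(-t))$, where $f_0$ is the smooth density of a reference weakly monotone Minkowski endomorphism $\Phi_0$ lying deep inside the cone of generating measures, and $b$ is a smooth non-negative bump concentrated near $t = 1$ with $\epsilon\,b(1) > f_0(1)$. By construction $f$ is even and strictly negative at $t = \pm 1$, so (b) holds.

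The main obstacle is (a): showing that $h_K \ast \mu$ remains a support function for every $K \in \K$. My strategy is to use Funk--Hecke to describe convolution with $\mu_0$ in terms of its Gegenbauer multipliers, and then to exploit the smoothness and strict positivity of $f_0$ to prove a uniform lower bound on the spherical Hessian of $h_K \ast \mu_0$ over convex bodies of a fixed mean width. If $f_0$ is chosen so that this Hessian stays positive-definite with a margin independent of $K$, a sufficiently small perturbation $\epsilon\,(b(t)+b(-t))$ contributes only an arbitrarily small $C^0$-shift to the Hessian, and so $h_K \ast \mu$ retains the positive semi-definite Hessian that characterises a support function. Producing such a robust $f_0$ and establishing the uniform Hessian estimate is the technical heart of the argument, and it is precisely the slack available only in dimensions $n \ge 3$ --- absent in the plane by Schneider's classification --- that makes the theorem possible.
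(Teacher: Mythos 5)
Your high-level architecture — start from a reference endomorphism with positive generating density $f_0$ and perturb it past the boundary of the non-negative cone by a narrow bump near the pole — is in fact the same shape as the paper's argument. The paper takes $\Phi_1(K)=w(K)B^n$ (generating density the constant $1$) as the reference and subtracts a monotone endomorphism $\Phi_2$ whose non-negative generating function $g$ has $g(\bar e)>1$ and support near $\bar e$; the difference $1-g$ is then the generating function of the desired non-monotone endomorphism. Your reformulation of ``not weakly monotone'' (even density strictly negative at both endpoints) is correct, and the idea of a robust reference with a uniform Hessian lower bound is sound.

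The gap is in the step where you assert that the perturbation ``contributes only an arbitrarily small $C^0$-shift to the Hessian.'' The perturbation density $\epsilon(b(t)+b(-t))$ is by design \emph{not} small in sup norm: you require $\epsilon b(1)>f_0(1)$, so as you narrow the support of $b$ you must make its height grow. Its effect on the Hessian of $h_K\ast\mu$ is governed (via $\Box_n(h_K\ast g)=s_1(K,\cdot)\ast g$) by integrals of the form $\int_{\Sp^{n-1}} g\,dS_1(K,\cdot)$, and a small $L^1$-norm of $g$ alone does not control these uniformly over $K$ with $\|h_K\|\le 1$, because $\|s_1(K,\cdot)\|_\infty$ is unbounded over such bodies. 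What is actually needed is a concentration estimate for the first-order area measure in small caps: this is Firey's bound $S_1(K,C_\alpha)\le A\frac{\sin^{n-2}\alpha}{\cos\alpha}\|h_K\|$, which is Theorem~\ref{thm:area_measure_bound} and the technical heart of Lemma~\ref{lem:rounding_endomorphism}. Note that $\sin^{n-2}\alpha\to 0$ as $\alpha\to 0$ precisely when $n\ge 3$; this, rather than Schneider's planar classification per se, is the concrete mechanism that makes the construction work only in dimensions $\ge 3$. A Funk--Hecke multiplier argument would face exactly the same difficulty, since a narrow bump near $t=1$ has slowly decaying Gegenbauer coefficients. You also leave the choice of $f_0$ and the uniform lower bound unproved; the paper's specific choice $f_0\equiv 1$ is convenient because $r_{\Phi_1 K}(u,v)=w(K)\ge\frac{2\omega_{n-1}}{n-1}\|h_K\|$ for origin-symmetric $K$ gives the needed margin cleanly. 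As written, your proof is a correct outline of the strategy but is missing the one estimate that makes it go through.
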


More recently, the investigations of Minkowski endomorphisms were extended to Minkowski valuations which generalize the notion of Minkowski-additive maps.

\begin{definition*} A map $\phi : \K \to \mathcal{A}$ with values in an abelian semigroup $\mathcal{A}$ is a \emph{valuation} if 
\[ \phi(K) + \phi(L) = \phi(K \cup L) + \phi(K \cap L) \]
whenever $K \cup L$ is convex.
\end{definition*} 

Scalar valuations, where $\mathcal{A} = \mathbb{R}$ or $\mathbb{C}$, were probably first considered in Dehn's solution of Hilbert's third problem. As a natural and important generalization of the notion of measure they have since then played a central role in convex and discrete geometry (see \cuscite{KlainRota} and \textbf{\cite[\textnormal{Chapter 6}]{Schneider14}}). The name \emph{Minkowski valuation} for valuations with values in $\K$ was first coined by Ludwig (see \cuscite{Ludwig05}). She started a line of research focusing on Minkowski valuations that intertwine the linear group (see \cuscite{Ludwig02, Ludwig05, Ludwig10, Abardia12, AbaBer, Haberl12, SchuWann12, Wannerer11}). In most cases, it has been proven that the Minkowski valuations under consideration could be characterized as conic combinations of fundamental and well known valuations such as the projection or difference body operators. 

On the other hand, cones of Minkowski valuations that merely intertwine rotations tend to be much more diverse. As a direct generalization of Minkowski endomorphisms, we will be focusing on the cone of continuous, translation-invariant and $SO(n)$-equivariant Minkowski valuations denoted by $\MVal$. To explain why this generalizes Minkowski endomorphisms, recall that a valuation $\Phi$ is called $j$-homogeneous if $\Phi(\lambda K) = \lambda^j \Phi(K)$ for all $K \in \K$ and $\lambda \geq 0$. Let $\MVal_j$ denote the subcone of $j$-homogeneous valuations in $\MVal$. Then, by a result of Spiegel (see \cuscite{Spiegel}), the cone $\MVal_1$ is precisely the cone of Minkowski endomorphisms. 

First efforts to describe $\MVal_j$ for $j > 1$ go back to Schuster. About the same time as Kiderlen established Theorem \ref{thm:kiderlen_representation} for $\MVal_1$, Schuster obtained a similar convolution representation for the space of $(n-1)$-homogeneous valuations $\MVal_{n-1}$. Later, he was able to prove a representation result for even and smooth elements in $\MVal$ of arbitrary degree of homogeneity. In order to state his result, recall that $\vol_j(K | E)$ denotes the volume of $K \in \K$ projected to $E \in \mathrm{Gr}_{j,n}$. (For the notions of convolution on Grassmannians and smooth valuations, see \cuscite{takeuchi} and Section 2.4, respectively).

\begin{theorem}[\! \cuscite{Schuster10}]
Let $\Phi \in \MVal$ be even, smooth and homogeneous of degree $j \in \{1, \dots, n\}$. Then there exists a unique $(O(j) \times O(n-j))$-invariant and even function $f_{\Phi} \in C^{\infty}(\Sp^{n-1})$, called the Crofton function of $\Phi$, such that
\[ h_{\Phi K} = \vol_j(K | \cdot) \ast f_{\Phi}, \] for all $K \in \K$.
\end{theorem}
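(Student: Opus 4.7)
The plan is to reduce to scalar-valued valuations by pointwise evaluation of the support function and then to invoke Alesker's classification of smooth, even, translation invariant valuations. For each $u \in \Sp^{n-1}$, define $\phi_u(K) := h_{\Phi K}(u)$. Because support functions are Minkowski-additive, the valuation property of $\Phi$ yields, whenever $K \cup L$ is convex,
\[ \phi_u(K \cup L) + \phi_u(K \cap L) = \phi_u(K) + \phi_u(L). \]
Hence each $\phi_u$ is a continuous, even, translation invariant, $j$-homogeneous real-valued valuation on $\K$, smooth in the Alesker sense as a consequence of the smoothness hypothesis on $\Phi$.

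By Alesker's theorem on smooth, even, translation invariant valuations of degree $j$, every such $\phi_u$ admits a Crofton representation
\[ \phi_u(K) = \int_{\mathrm{Gr}_{j,n}} \vol_j(K \, | \, E) \, g_u(E) \, dE \]
for some smooth density $g_u \in C^{\infty}(\mathrm{Gr}_{j,n})$, unique modulo the kernel of the projection-volume transform. The equivariance $\Phi \circ \vartheta = \vartheta \circ \Phi$ gives $\phi_{\vartheta u}(K) = \phi_u(\vartheta^{-1} K)$; a change of variables together with the uniqueness statement forces the compatibility $g_{\vartheta u}(E) = g_u(\vartheta^{-1} E)$ for every $\vartheta \in SO(n)$. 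Thus the family $\{g_u\}$ is determined by $g_{e_n}$ alone, and this density is automatically invariant under the stabilizer $SO(n-1)$ of $e_n$.

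To finish, I would convert the $SO(n-1)$-invariant density $g_{e_n}$ on $\mathrm{Gr}_{j,n} = O(n)/(O(j) \times O(n-j))$ into an $(O(j) \times O(n-j))$-invariant function $f_\Phi$ on $\Sp^{n-1} = SO(n)/SO(n-1)$: both points of view descend to the common biquotient $(O(j) \times O(n-j)) \backslash O(n) / O(n-1)$, which is precisely the setup for the convolution between the two homogeneous spaces in the sense of Takeuchi. Reassembling via $g_u(E) = g_{e_n}(\vartheta^{-1} E)$ with $\vartheta$ any rotation sending $e_n$ to $u$ then recasts the Crofton integral as the desired convolution $\vol_j(K \, | \, \cdot) \ast f_\Phi$ evaluated at $u$.

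The main obstacle will be controlling the kernel of the projection-volume transform in order to choose a canonical $g_u$ that depends smoothly on $u$ and to ensure uniqueness of $f_\Phi$; injectivity of this transform on smooth $(O(j) \times O(n-j))$-invariant even densities handles this and is classical. Evenness of $f_\Phi$ is then automatic from $-\mathrm{id} \in O(j) \times O(n-j)$ combined with the evenness of $\Phi$.
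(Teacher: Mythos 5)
This theorem is quoted by the paper from \cuscite{Schuster10} as background; the paper itself contains no proof to compare yours against. Your outline --- reduce to scalar valuations $\phi_u(K)=h_{\Phi K}(u)$, apply Alesker's Crofton representation for smooth even translation invariant valuations of degree $j$, use $SO(n)$-equivariance to reduce the family $\{g_u\}$ to a single $SO(n-1)$-invariant density $g_{\bar e}$, and then repackage via the biquotient $(O(j)\times O(n-j))\backslash O(n)/O(n-1)$ --- is indeed the right skeleton, and it is the approach taken in Schuster's paper. Your remark that each $\phi_u$ is Alesker-smooth is correct but should be justified in one line: evaluation at $u$ is a continuous linear functional on $C(\Sp^{n-1})$, hence commutes with the $GL(n)$-orbit map of $\Phi$ and with taking its derivatives.

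The genuine gap is in the uniqueness step. You appeal to ``injectivity of this transform on smooth $(O(j)\times O(n-j))$-invariant even densities'' as though it were a standard fact, but the projection-volume (generalized cosine) transform on $C^{\infty}(\mathrm{Gr}_{j,n})$ has a nontrivial kernel for $1<j<n-1$, and the entire substance of the uniqueness claim in the theorem is that this kernel is avoided after one passes to the spherical side. What actually has to be shown is that the Crofton operator $f\mapsto \vol_j(K\,|\,\cdot)\ast f$ has trivial kernel on $(O(j)\times O(n-j))$-invariant, even, smooth functions on $\Sp^{n-1}$; this requires a Funk--Hecke/multiplier computation with spherical harmonics and is not an instance of any pre-existing injectivity statement on the Grassmannian. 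Without it you have existence of some Crofton density up to an unknown kernel element, but no canonical $g_u$ depending smoothly on $u$ and no uniqueness of $f_\Phi$, which is precisely what your ``main obstacle'' sentence concedes but does not resolve.
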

Note that, in the case $j = 1$, the Crofton function is equal to the generating function of $\Phi$ up to a constant. Let $\mathrm{C}: \mathcal{M}(\Sp^{n-1}) \to C(\Sp^{n-1})$ denote the cosine transform (see Section 2.2). It can be shown that
\begin{equation} \label{eq:generating_function} \costrans f = h_L , \quad L \in \K\end{equation}
is a necessary condition for a smooth function $f \in C^{\infty}(\Sp^{n-1})$ to be the Crofton measure of a $j$-homogeneous, even and smooth $\Phi \in \MVal$
(see \cuscite{SchuWann15}). When (\ref{eq:generating_function}) holds, $L$ is called a \emph{generalized zonoid} and $f$ is called the \emph{generating function of the convex body} $L$. 

From the result in \cuscite{Schuster07}, it follows that condition (\ref{eq:generating_function}) is also sufficient for a function $f \in C^{\infty}(\Sp^{n-1})$ to be the Crofton measure of an $(n-1)$-homogeneous Minkowski valuation. Schuster and Wannerer posed the problem of deciding whether generating functions of smooth convex bodies are Crofton measures for every $1 \leq j \leq n-1$ (see \cuscite{SchuWann15}). Note that, in the case $j=1$, this problem provides an inherent conjecture about the structure of the cone of smooth and even Minkowski endomorphisms. However, here we prove the following:

\begin{theorem} \label{thm:main_theorem2}
For $n \geq 3$, there exists an origin symmetric strictly convex and smooth body of revolution $L \in \mathcal{K}^n$ such that its generating function is not a generating function of an even Minkowski endomorphism.
\end{theorem}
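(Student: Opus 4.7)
The plan is to construct $L$ as a small smooth perturbation of the Euclidean unit ball in a single high-order zonal Gegenbauer mode, invert the cosine transform explicitly to write down $f_L$, and then exhibit a test segment $K$ for which $h_K \ast f_L$ violates the support-function condition at the endpoint of $K$. I would work throughout in the Funk--Hecke framework on $\Sp^{n-1}$: for a fixed axis $e$, zonal functions expand in Gegenbauer polynomials $C_k^{(n-2)/2}(u \cdot e)$, and the cosine transform $\costrans$ is diagonal in this basis with multipliers $\lambda_k$ that vanish for odd $k$ and decay algebraically as $k \to \infty$.

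Concretely, I would take $h_L(u) = 1 + \epsilon\, C_{k_0}^{(n-2)/2}(u \cdot e)$ with $k_0$ a large even integer and $\epsilon k_0^2$ small; this makes $L$ origin-symmetric, strictly convex, smooth, and a body of revolution, and a generalized zonoid with generating function $f_L = 1/\lambda_0 + (\epsilon/\lambda_{k_0})\, C_{k_0}^{(n-2)/2}(u \cdot e)$. For the test body, I would take $K = [-v, v]$ with $v \neq \pm e$; the support function $h_K(u) = |u \cdot v|$ has an explicit zonal Gegenbauer expansion about $v$ with coefficients $\alpha_k$, and by Funk--Hecke the convolution is $(h_K \ast f_L)(u) = A + B\, C_{k_0}^{(n-2)/2}(u \cdot v)$ with $A = \alpha_0/\lambda_0 > 0$ a positive constant and $B = \epsilon \alpha_{k_0}/\lambda_{k_0}$. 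At $u = v$, the tangential spherical Hessian of $C_{k_0}^{(n-2)/2}(u \cdot v)$ has eigenvalue $-k_0(k_0+n-2)\, C_{k_0}^{(n-2)/2}(1)/(n-1)$ in every tangent direction, so the support-function condition $\nabla^2(h_K \ast f_L) + (h_K \ast f_L)\, I \succeq 0$ reduces at $u = v$ to the scalar inequality $A + B\, C_{k_0}^{(n-2)/2}(1)\,(1 - k_0(k_0+n-2)/(n-1)) \geq 0$.

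For $n = 3$ one has $C_k^{1/2}(1) = 1$ and $\alpha_k/\lambda_k = (2k+1)/(4\pi)$, and both $\alpha_{k_0}$ and $\lambda_{k_0}$ are Funk--Hecke integrals of $|t|$ against $P_{k_0}$ with the same sign, so $B > 0$, which is the sign required to violate the inequality. Choosing $\epsilon = k_0^{-5/2}$ yields $\epsilon k_0^2 = k_0^{-1/2} \to 0$ (preserving the strict convexity of $L$ uniformly on $\Sp^{n-1}$) while $|B|\, k_0^2 \sim \epsilon k_0^3/(2\pi) = k_0^{1/2}/(2\pi) \to \infty$ overwhelms $A = 1/(4\pi)$, breaking the inequality at $u = v$ for $k_0$ sufficiently large. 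The main obstacle is establishing the analogous quantitative balance for all $n \geq 3$, namely that $|\alpha_k^{(n)}/\lambda_k^{(n)}| \cdot C_k^{(n-2)/2}(1) \cdot k^2$ can be driven faster than $1/\epsilon$ while $\epsilon k^2 \ll 1$. This reduces to classical asymptotics of Gegenbauer polynomials and of the Funk--Hecke integrals $\int_{-1}^1 |t|\, C_k^{(n-2)/2}(t)\,(1-t^2)^{(n-3)/2}\, dt$, obtainable via Darboux's method or stationary phase; once in hand, the construction goes through uniformly for $n \geq 3$.
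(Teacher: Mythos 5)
Your approach is genuinely different from the paper's: the paper argues dually, using Lemma~\ref{first_area_necessary} to place first area measures in $(\mathcal{MG}^\infty)^\ast$ and then showing via Weil's criterion (Corollary~\ref{generating_characterization_axial}) that $S_1(D,\cdot)$ for a double cone $D$ lies outside $(\mathcal{G}^\infty)^\ast$, whereas you attempt a direct construction of $L$ followed by a test against a segment. Unfortunately, the choice of a segment as test body is fatal, and the argument cannot be rescued while keeping segments.

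Indeed, for $K = [-v, v]$ we have $h_K = \vartheta\,|\bar{e}\cdot\,\cdot\,|$, where $\vartheta \in SO(n)$ sends $\bar{e}$ to $v$. Since the spherical convolution is $SO(n)$-equivariant in its first argument and commutative on zonal functions, for \emph{every} generating function $f_L$ of a generalized zonoid $L$ one has
\[ h_K \ast f_L \;=\; \bigl(\vartheta\,|\bar{e}\cdot\,\cdot\,|\bigr)\ast f_L \;=\; \vartheta\bigl(\,|\bar{e}\cdot\,\cdot\,| \ast f_L\bigr) \;=\; \vartheta\bigl(\costrans f_L\bigr) \;=\; \vartheta\, h_L \;=\; h_{\vartheta L}. \]
Thus $h_K \ast f_L$ is automatically the support function of a rotated copy of $L$, so no segment can ever witness the failure of $f_L$ to generate a Minkowski endomorphism. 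This is also visible in the constants: a careful pass through the Funk--Hecke multipliers for your perturbed ball gives $A = 1$ and $B = \epsilon$ (not $B = \epsilon\,\alpha_{k_0}/\lambda_{k_0}$), so that the scalar condition at $u = v$ becomes $1 + \epsilon\,C_{k_0}^{(n-2)/2}(1)\bigl(1 - k_0(k_0+n-2)/(n-1)\bigr) \geq 0$, which is \emph{exactly} the strict-convexity condition you have imposed on $L$ at its pole. The discrepancy arises because the convolution does not multiply raw Gegenbauer coefficients; the Funk--Hecke multiplier of $f_L$ and the Gegenbauer coefficient $\alpha_{k_0}$ of $h_K$ carry reciprocal normalization factors that cancel exactly in $B$.

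To repair a direct attack you would need a test body whose surface area measure has a genuinely singular structure not reproducible by the functionals appearing in the Weil criterion, which is precisely what the paper's double cone achieves: the singular ring of $S_1(D,\cdot)$ at latitude $1/\sqrt{2}$ cannot be a weak limit of the functionals $\Psi_{\alpha,\beta}$. Segments are the worst possible choice for this purpose, since the definition of a generating function is designed to make them trivial.
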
 

More recently, Schuster and Wannerer were able to obtain a general Hadwiger type theorem for $\MVal$.

\begin{theorem}[\!\cuscite{SchuWann16}] \label{thm:hadwiger}
If $\Phi: \K \to \K$ is a continuous Minkowski valuation which is translation invariant and $SO(n)$ equivariant, then there exist uniquely determined $c_0, c_n \geq 0$, $SO(n-1)$ invariant $\mu_j \in \mathcal{M}_{o}(\Sp^{n-1}),$ for $ 1 \leq j \leq n-2$, and an $SO(n-1)$ invariant $f_{n-1} \in C(\Sp^{n-1})$ such that 
\begin{equation} \label{eq:hadwiger}
h_{\Phi K} = c_0 + \sum_{j=1}^{n-2} S_j(K, \cdot) \ast \mu_j + S_{n-1}(K, \cdot) \ast f_{n-1} + c_n \vol_n(K)
\end{equation}
for every $K \in \K$.
\end{theorem}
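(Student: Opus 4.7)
The plan is to establish \eqref{eq:hadwiger} by combining a pointwise McMullen decomposition in the sphere variable with a Crofton-type representation of each homogeneous piece, reducing the hard intermediate degrees to the smooth setting via Alesker's density theorem.

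First, fix $u \in \Sp^{n-1}$. The scalar functional $K \mapsto h_{\Phi K}(u)$ is a continuous, translation invariant, real-valued valuation on $\K$, so McMullen's polynomial expansion theorem yields
\[
h_{\Phi K}(u) = \sum_{j=0}^{n} \phi_j(K,u), \qquad \phi_j(\lambda K, u) = \lambda^j \phi_j(K, u) \quad (\lambda \geq 0).
\]
The Lagrange-interpolation construction of $\phi_j$ from $\lambda \mapsto h_{\Phi(\lambda K)}(u)$ ensures that each $\phi_j(K,\cdot) \in C(\Sp^{n-1})$ and that $\phi_j(\vartheta K, \vartheta u) = \phi_j(K,u)$ for every $\vartheta \in SO(n)$. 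The extremal degrees are then immediate: $\phi_0(K,u)$ is independent of $K$ and rotation invariant, hence a constant $c_0$; and Hadwiger's classification of $n$-homogeneous scalar valuations gives $\phi_n(K,u) = c_n(u)\,\vol_n(K)$, which rotational invariance collapses to $c_n \vol_n(K)$ for a scalar $c_n$. Non-negativity is obtained by testing at $K=\{0\}$ and at $K = rB^n$ with $r \to \infty$, using that $\Phi(rB^n)$ is a centred ball whose radius $\sum_j \alpha_j r^j$ must be non-negative in $r$.

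Second, for each intermediate degree $1 \leq j \leq n-1$, fix a pole $\bar e \in \Sp^{n-1}$. The functional $K \mapsto \phi_j(K, \bar e)$ is a continuous, translation invariant, $j$-homogeneous scalar valuation that is additionally $SO(n-1)$-invariant under the stabiliser of $\bar e$. The key step is the representation
\[
\phi_j(K, \bar e) = \int_{\Sp^{n-1}} \varphi_j(v) \, dS_j(K,v)
\]
for a zonal kernel $\varphi_j$. When $j = n-1$ this is exactly McMullen's representation of $(n-1)$-homogeneous valuations by continuous densities against $S_{n-1}(K,\cdot)$, yielding the continuous function $f_{n-1}$. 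When $1 \leq j \leq n-2$ the kernel $\varphi_j$ is in general only a signed measure; one obtains it by first approximating $\Phi$ by smooth elements of $\MVal$ via Alesker's irreducibility theorem, then invoking the Crofton-type representation for smooth $j$-homogeneous $SO(n-1)$-invariant valuations (rewriting volumes of projections in terms of integrals against $S_j(K,\cdot)$ via integration by parts with the cosine transform), and finally passing to a weak-$*$ limit. Transporting the pointwise identity from $\bar e$ to arbitrary $u$ by $SO(n)$-equivariance and rewriting in spherical convolution form produces $\phi_j(K,u) = (S_j(K,\cdot) \ast \mu_j)(u)$.

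The centring condition $\mu_j \in \mathcal{M}_o(\Sp^{n-1})$ is enforced by observing that a linear component of $\mu_j$ only contributes a translation to $\Phi K$, which is killed by the translation invariance of $\Phi$ and leaves the convolution unchanged. Uniqueness of the whole tuple follows from the McMullen projectors (which extract each homogeneous piece individually) together with injectivity of $\mu \mapsto S_j(K,\cdot) \ast \mu$ on $\mathcal{M}_o(\Sp^{n-1})$, which rests on the weak-$*$ totality of $\{S_j(K,\cdot) : K \in \K\}$. The main obstacle is the intermediate range $1 \leq j \leq n-2$: these degrees fall outside McMullen's and Hadwiger's classical scalar theorems, and producing the signed measure $\mu_j$ from a general continuous (non-smooth) $\Phi$ genuinely requires the smooth-valuation machinery of Alesker, Bernig and Schuster together with a delicate weak-$*$ approximation, with extra care needed for the odd part of $\Phi$ where Crofton representations are not as well behaved.
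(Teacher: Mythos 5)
First, a point of orientation: the paper does not prove this theorem. It is quoted from Schuster and Wannerer \cuscite{SchuWann16}; the present paper only reproves a homogeneous variant (Corollary \ref{corollary}) as a consequence of Theorem \ref{thm:main_theorem}. So there is no in-paper proof to compare against, and your proposal has to stand on its own.

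It does not, because of a genuine gap at the decisive step. Your skeleton --- pointwise McMullen decomposition of $K \mapsto h_{\Phi K}(u)$, Hadwiger and positivity for the extremal degrees, reduction of degree $j$ to a zonal kernel integrated against $S_j(K,\cdot)$, transport by equivariance --- is the standard and correct reduction. The entire difficulty of the theorem, however, is hidden in the phrase ``finally passing to a weak-$*$ limit''. A general continuous, translation-invariant, $j$-homogeneous scalar valuation with $1 \leq j \leq n-2$, even an $SO(n-1)$-invariant one, is represented by pairing $S_j(K,\cdot)$ with a \emph{distribution} of positive order, not with a measure; smooth approximants give smooth kernels $\varphi_j^{(i)}$, but without a bound on $\|\varphi_j^{(i)}\|_{\mathrm{TV}}$ uniform in $i$ there is no weak-$*$ convergent subsequence and hence no limiting measure $\mu_j$. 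Such a bound cannot come from valuation theory alone: it requires the geometric input that $h_{\Phi K}$ is a support function for every $K$, i.e. that $\Phi$ is convex-body-valued. Your argument invokes this hypothesis only to get $c_0, c_n \geq 0$ and never in the intermediate degrees, so as written it would ``prove'' a measure representation for arbitrary continuous $SO(n)$-equivariant valuations with values in $C(\Sp^{n-1})$, which is false. Compare Theorem \ref{thm:main_theorem} of the paper: even in degree $j=1$, where the representation by a generating distribution is classical (Theorem \ref{thm:kiderlen_representation}), upgrading that distribution to a measure is exactly a total-variation estimate of the form $\|\mu\|_{\mathrm{TV}} \leq C_n\,\mu(\Sp^{n-1})$ on a dual cone of first-order area measures, and establishing it is the main theorem of the paper, not a routine limit. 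The same obstruction, compounded by the odd part of $\Phi$ for which you correctly note the Crofton picture degenerates, is what the machinery of \cuscite{SchuWann16} (generalized valuations and equivariant module decompositions) is built to overcome; your proposal names the obstacle in its last sentence but does not surmount it.
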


Additionally, the authors remarked that in general the measures $\mu_j$ could not have densities in $\mathrm{L}^2 (\Sp^{n-1})$. They, however, left it as an open problem whether the $\mu_j$ are absolutely continuous with a density in $\mathrm{L}^1(\Sp^{n-1})$. Under the additional assumption of homogeneity, we are able to give a simplified  proof of Theorem \ref{thm:hadwiger} and also establish the conjectured extra regularity properties. This follows as a corollary from Theorem \ref{thm:main_theorem}.
{}
\begin{corollary} \label{corollary}
If $\Phi \in \MVal_j$, then there exists a zonal $f \in \mathrm{L}^1(\Sp^{n-1})$ such that 
\[ h_{\Phi K} = S_j(K, \cdot) \ast f \]
for every $K \in \K$.
\end{corollary}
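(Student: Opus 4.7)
My plan is to reduce $\Phi \in \MVal_j$ to the representation $h_{\Phi K} = S_j(K, \cdot) \ast \mu_j$ with $\mu_j$ a signed measure via Theorem~\ref{thm:hadwiger}, then associate a Minkowski endomorphism $\Psi$ to $\Phi$ through the first variation at the ball, apply Theorem~\ref{thm:main_theorem} to $\Psi$, and finally invert a second-order spherical differential operator to upgrade $\mu_j$ from a measure to an $L^1$ density.

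For the reduction, I would apply Theorem~\ref{thm:hadwiger} to $\Phi$: matching degrees of homogeneity in $K$ in each summand of (\ref{eq:hadwiger})---the terms $c_0$, $S_i(K, \cdot) \ast \mu_i$, $S_{n-1}(K, \cdot) \ast f_{n-1}$ and $c_n\vol_n(K)$ carry degrees $0$, $i$, $n-1$ and $n$ respectively---so that $j$-homogeneity of $\Phi$ annihilates all but the $j$-th contribution. The cases $j \in \{0, n\}$ are trivial, and $j = n-1$ already yields $f = f_{n-1} \in C(\Sp^{n-1}) \subset L^1(\Sp^{n-1})$. In the remaining range $1 \leq j \leq n-2$, one obtains $h_{\Phi K} = S_j(K, \cdot) \ast \mu_j$ for some zonal $\mu_j \in \mathcal{M}_o(\Sp^{n-1})$, and the task reduces to establishing $L^1$ regularity of $\mu_j$. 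To this end, I would construct an associated Minkowski endomorphism. Because $K \mapsto h_{\Phi K}(u)$ is, for each fixed $u \in \Sp^{n-1}$, a continuous translation-invariant real valuation of degree $j$, McMullen's polynomial expansion yields
\[ h_{\Phi(B^n + tL)} = \sum_{k = 0}^{j} \binom{j}{k} t^k \phi_k(L), \qquad t \geq 0, \]
with each $\phi_k : \K \to C(\Sp^{n-1})$ continuous, translation-invariant, $SO(n)$-equivariant, $k$-homogeneous, and (by polarization) Minkowski-additive in $L$. The key technical step---and the main obstacle---is to verify that $\psi := \phi_1$ always takes values in the cone of support functions, so that $\psi(L) = h_{\Psi L}$ defines a genuine Minkowski endomorphism $\Psi \in \MVal_1$. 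My approach would be to first smooth $\Phi$ by zonal spherical convolution (which preserves $\MVal_j$), where the first variation is visibly a generalized zonoid via the smooth Crofton representation, and then pass to the limit using closedness of support functions under uniform convergence.

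With $\Psi \in \MVal_1$ established, Theorem~\ref{thm:main_theorem} renders $\Psi$ Lipschitz continuous, so by Theorem~\ref{thm:kiderlen_representation} there exists a zonal signed measure $\nu \in \mathcal{M}_o(\Sp^{n-1})$ with $\psi(L) = h_L \ast \nu$. On the other hand, substituting the expansion $S_j(B^n + tL, \cdot) = \sum_k \binom{j}{k} t^k S_k(L, \cdot)$ into $h_{\Phi K} = S_j(K, \cdot) \ast \mu_j$, together with the classical identity $S_1(L, \cdot) = [(\Delta + (n-1))h_L]\,d\sigma$ (with $\Delta$ the spherical Laplacian), gives, on reading off the $t$-linear coefficient, $\psi(L) = j\,h_L \ast (\Delta + (n-1))\mu_j$. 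Comparing the two expressions yields $\nu = j(\Delta + (n-1))\mu_j$ as zonal distributions, hence
\[ \mu_j = \tfrac{1}{j}(\Delta + (n-1))^{-1}\nu. \]
The operator $\Delta + (n-1)$ on $\Sp^{n-1}$ has kernel precisely the restrictions of linear functions, so it is invertible on $\mathcal{M}_o(\Sp^{n-1})$; its inverse is zonal convolution with a Green's function whose singularity on the diagonal of $\Sp^{n-1}$ is of order $|x-y|^{3-n}$ for $n \geq 4$ and logarithmic for $n = 3$---integrable in either case. Young's inequality for spherical convolution then produces $\mu_j \in L^1(\Sp^{n-1})$, completing the proof.
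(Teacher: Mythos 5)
Your proposal follows essentially the same route as the paper: reduce to a single homogeneous term via Theorem~\ref{thm:hadwiger}, take the first variation at the ball to drop to $\MVal_1$, invoke Theorem~\ref{thm:main_theorem} (via Theorem~\ref{thm:kiderlen_representation}) to see that the generating distribution is a measure, and finally invert $\Box_n$ against Berg's Green's function and use an $\mathrm{L}^1$-boundedness of spherical convolution by measures (the paper's Lemma~\ref{lem:convolution_integrable}) to upgrade $\nu$ to an $\mathrm{L}^1$ density. The paper packages the McMullen expansion into the derivation operator $\Lambda$ of \cuscite{SchuWann15,SchuWann16} and uses $\Box_n = \frac{1}{n-1}\operatorname{Tr}\nabla^2(\cdot)_1$ rather than your $\Delta + (n-1)$; these are the same operator up to the factor $n-1$, which you have dropped in writing $S_1(L,\cdot) = (\Delta+(n-1))h_L\,du$ (it should be $s_1(L,\cdot) = \frac{1}{n-1}(\Delta+(n-1))h_L$), but this is harmless for the $\mathrm{L}^1$ conclusion.

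The one place where your argument has a genuine gap is the step you yourself single out as ``the main obstacle'': showing that the first variation $\psi = \phi_1$ lands in the cone of support functions, i.e.\ that $\Lambda^{j-1}\Phi \in \MVal_1$. Your proposed route via smoothing and the Crofton representation of \cuscite{Schuster10} only applies to \emph{even} smooth elements of $\MVal$, whereas Corollary~\ref{corollary} makes no evenness assumption. Moreover, even in the even case it is not immediate that the first variation of a Crofton-represented Minkowski valuation is again a support function --- the Crofton representation expresses $h_{\Phi K}$ as a convolution, not the first variation. The paper avoids both issues by citing \cuscite{SchuWann16} for the fact that $\Lambda$ maps $\MVal$ to $\MVal$ and for the identity $h_{\Lambda^{j-1}\Phi(K)} = S_1(K,\cdot)\ast\nu$; if you want a self-contained treatment of this step, you need an argument covering the odd part as well, which the smooth-Crofton route does not supply.
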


One should mention that Corollary \ref{corollary} does not imply a stronger version of Theorem \ref{thm:hadwiger} because it is not known whether all the summands in (\ref{eq:hadwiger}) have to be support functions. The corresponding question, namely whether any element in $\MVal$ is decomposable into homogeneous Minkowski valuations, was first raised by Schneider and Schuster (see  \cuscite{SchnSchu}, Section 5 and \cuscite{Schuster10}). More generally, Parapatits and Schuster asked this question for Minkowski valuations that do not necessarily intertwine rotations (cf. \cuscite{ParSchu}).  Recently, Parapatits and Wanner proved that in this general setting such a decomposition is not possible (see \cuscite{ParWan13}). However, the original problem of whether such a decomposition exists for $\MVal$ remained open. We will introduce a novel way to construct $SO(n)$-equivariant Minkowski valuations that, together with Theorem \ref{thm:main_theorem2} and the result from \cuscite{ParWan13}, yields the following theorem.

\begin{theorem} \label{thm:main_theorem3}
If $n\geq 3$, then there exists a continuous, even, translation-invariant and $SO(n)$-equivariant Minkowski valuation $\Phi: \K \to \K$ which cannot be decomposed into a sum of homogeneous Minkowski valuations.
\end{theorem}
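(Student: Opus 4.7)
By Theorem \ref{thm:main_theorem2}, fix an origin-symmetric, smooth, strictly convex body of revolution $L \in \K$ whose generating function $f \in C^{\infty}(\Sp^{n-1})$ is not the generating function of any even Minkowski endomorphism; equivalently, there exists some $K_{0} \in \K$ such that $h_{K_{0}} \ast f$ is not the support function of any convex body. Nevertheless, since $f$ is the generating function of the generalized zonoid $L$ and is $SO(n-1)$-invariant, Schuster's representation of $\MVal_{n-1}$ from \cuscite{Schuster07} (recalled after Theorem \ref{thm:main_theorem2}) tells us that $f$ \emph{is} the Crofton function of some even $\Psi_{n-1} \in \MVal_{n-1}$. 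The plan is to use $\Psi_{n-1}$ as a building block of an even $\Phi \in \MVal$ whose hypothetical McMullen decomposition $\Phi = \sum_{j=0}^{n} \Phi_{j}$ with $\Phi_{j} \in \MVal_{j}$ would be forced to have $\Phi_{1}$ an even Minkowski endomorphism with generating function $f$, yielding the desired contradiction.

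\textbf{Construction.} For any $\Phi \in \MVal$, applying McMullen's decomposition theorem to the scalar continuous translation-invariant valuation $K \mapsto h_{\Phi K}(u)$ produces, for each $u \in \Sp^{n-1}$, a unique polynomial expansion
\[
h_{\Phi(\lambda K)}(u) = \sum_{j=0}^{n} \lambda^{j} \, p_{j}(K, u), \qquad \lambda \geq 0,
\]
in which each $p_{j}(K, \cdot)$ is a continuous, translation-invariant, $SO(n)$-equivariant $j$-homogeneous scalar valuation in $K$. A McMullen decomposition of $\Phi$ \emph{within} $\MVal$ exists if and only if each $p_{j}(K, \cdot)$ is itself a support function for every $K \in \K$. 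Following the blueprint of Parapatits and Wannerer \cuscite{ParWan13} --- who, outside the $SO(n)$-equivariant class, produce non-decomposable Minkowski valuations by arranging the scalar coefficients $p_{j}$ not to be support functions individually even though $\sum_{j} \lambda^{j} p_{j}$ always is --- I plan to engineer an even $\Phi \in \MVal$ for which $p_{1}(K, \cdot) = h_{K} \ast f$ while the entire sum $\sum_{j} \lambda^{j} p_{j}(K, \cdot)$ remains a support function for all $\lambda \geq 0$ and $K \in \K$. The novel ingredient is an $SO(n)$-equivariant construction that combines $\Psi_{n-1}$ (carrying $f$ into degree $n-1$) with carefully chosen lower-degree Minkowski valuations (drawn, for example, from Theorem \ref{thm:kiderlen_positive} and Corollary \ref{corollary}) so that the higher-degree contributions absorb the failure of $h_{K} \ast f$ to be a support function, but do not alter the $\lambda$-linear coefficient in the expansion above.

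\textbf{Main obstacle.} The content lies entirely in the construction step: one must simultaneously ensure that $\Phi$ is (i) Minkowski-body valued, (ii) a valuation in the inclusion--exclusion sense, (iii) even, $SO(n)$-equivariant, and translation invariant, while (iv) having its $\lambda$-linear coefficient equal to $h_{K} \ast f$ for the specific $f$ fixed above. Once such a $\Phi$ is produced, the conclusion is immediate: any homogeneous decomposition $\Phi = \sum_{j} \Phi_{j}$ with $\Phi_{j} \in \MVal_{j}$ would, by uniqueness of the McMullen expansion, satisfy $h_{\Phi_{1} K}(u) = p_{1}(K, u) = (h_{K} \ast f)(u)$ for every $K$, exhibiting $\Phi_{1}$ as an even Minkowski endomorphism with generating function $f$ --- which is excluded by our choice of $L$ via Theorem \ref{thm:main_theorem2}. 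Hence no such decomposition of $\Phi$ exists.
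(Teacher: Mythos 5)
Your strategy correctly identifies the role of Theorem \ref{thm:main_theorem2}: the goal is to produce an even $\Phi \in \MVal$ whose $\lambda$-linear McMullen coefficient $p_1(K,\cdot)$ equals $h_K \ast \rho_L$ for a body of revolution $L$ whose generating function $\rho_L$ fails to be the generating function of any Minkowski endomorphism. If such a $\Phi$ exists, uniqueness of the scalar McMullen expansion forces any decomposition $\Phi = \sum_j \Phi_j$ in $\MVal$ to have $h_{\Phi_1 K} = h_K \ast \rho_L$, a contradiction. This is indeed the logic the paper uses, and your reduction is sound.

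However, there is a genuine gap: the crucial construction step is deferred (``I plan to engineer\dots'') and the suggested mechanism does not work. You propose to route $f = \rho_L$ through the $(n-1)$-homogeneous Minkowski valuation $\Psi_{n-1}$ with Crofton function $f$, but a Crofton function $f$ for $\Psi_{n-1}$ gives $h_{\Psi_{n-1} K} = \mathrm{vol}_{n-1}(K|\cdot)\ast f = \tfrac{1}{2}\,S_{n-1}(K,\cdot)\ast h_L$; transferring this down to degree $1$ via the derivation operator $\Lambda$ of Corollary \ref{corollary} produces, up to constants, $h_K \ast \Box_n h_L = h_K \ast s_1(L,\cdot)$, not $h_K \ast \rho_L$. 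So there is no mechanism in your outline by which the $1$-homogeneous coefficient acquires the form $h_K \ast \rho_L$, and adding ``carefully chosen lower-degree Minkowski valuations'' does not change $p_1$ at all, since McMullen coefficients of different degrees cannot interfere with one another.

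The paper fills this gap with a genuinely new ingredient, namely the averaged construction
\[
\Psi_{L,\phi} K (u) := \int_{SO(n)} \phi(\theta^{-1}K)\, h_{\theta L}(u)\, d\theta,
\]
for a (possibly signed) $\phi \in \Val$ and a fixed convex body $L$. If $\phi \geq 0$ then $\Psi_{L,\phi}K$ is automatically a support function, and for $\phi \in \Val_1^+$ $SO(n-1)$-invariant with Klain function $\mathrm{Kl}_\phi$ one gets $\costrans(\Psi_{L,\phi}K) = h_K \ast h_L \ast \mathrm{Kl}_\phi$. Choosing $\phi_\epsilon = c_\epsilon + \varphi_\epsilon + d_\epsilon V_2$ positive (Parapatits--Wannerer) with $\mathrm{Kl}\varphi_\epsilon$ converging weakly to $\tfrac{1}{2}(\delta_{\bar e} + \delta_{-\bar e})$, the $1$-homogeneous coefficient of $\Psi_{L,\phi_\epsilon}$ is $\Psi_{L,\varphi_\epsilon}$, and if \emph{all} of these were support functions for all $\epsilon$ and all generalized zonoids $L$, then in the limit $h_K \ast \rho_L$ would be a support function for all $K$ and all such $L$ --- precisely what Theorem \ref{thm:main_theorem2} rules out. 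Note also that the paper's argument is a contradiction across the whole family $\{\Phi_{L,\phi_\epsilon}\}$ rather than an exhibition of one explicit non-decomposable $\Phi$; your proposal presupposes a single explicit construction, which is the hard part and is not supplied.
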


\vspace{0.1cm}

\section{Background Material}

In this section we collect the necessary background and definitions from convex geometry, functional analysis and analysis on the sphere. We also prove, a auxiliary corollary of a result by Weil. Finally, we recall some definitions and results from the theory of valuations that we will require later on. \\

\noindent{\bf 2.1. Cones in Locally Convex Spaces.} Let $X$ be a locally convex vector space and $X^{\ast}$ its dual space equipped with the weak-$^{\ast}$ topology. Recall that the \emph{dual space} of $X^*$ equipped with the weak-$^{\ast}$ topology can be identified with $X$ (cf. \cuscite{Rudin}). A \emph{cone} is a set $C \subseteq X$ such that $x, y \in C \Rightarrow x+y \in C$ and $x \in C \Rightarrow \lambda x \in C$ for all $\lambda \geq 0$. Given a set $M \subseteq X$ the smallest closed cone containing $M$ is denoted by 
\[ \mathrm{cone} (M) = \overline { \left\{ \sum_{i=1}^{m} \alpha_i x_i : \alpha_i \geq 0, \, m \in \mathbb{N}, \,\, x_i \in M \right\} } . \] For a cone 
$C \subseteq X $ its dual cone is defined by \[C^{\ast} := \{ f \in X^{\ast} : \, f(x) \geq 0, \, \, x \in C \} .\] 
The following theorem is a well-known consequence of the Hahn-Banach theorem. For the reader's convenience, we provide a short proof.
\begin{theorem} \label{double_dual}
Let $C \subseteq X$ be a cone. Then 
\begin{equation} \label{eq:double_dual}
C^{\ast \ast} = \overline{C} ,
\end{equation}
where the closure is taken in the topology of $X$.
\end{theorem}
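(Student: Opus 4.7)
The plan is to prove the two inclusions $\overline{C} \subseteq C^{\ast\ast}$ and $C^{\ast\ast} \subseteq \overline{C}$ separately, with the nontrivial direction handled by a standard Hahn--Banach separation argument.

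For the easy inclusion, I would first observe that $C^{\ast\ast}$, viewed as a subset of $X$ via the identification of $(X^{\ast})^{\ast}$ (with the weak-$^{\ast}$ topology on $X^{\ast}$) with $X$, has the concrete description
\[ C^{\ast\ast} = \{ x \in X : f(x) \geq 0 \text{ for all } f \in C^{\ast} \}. \]
Any $x \in C$ satisfies $f(x) \geq 0$ for every $f \in C^{\ast}$ by definition of the dual cone, so $C \subseteq C^{\ast\ast}$. Moreover, $C^{\ast\ast}$ is closed in $X$ as an intersection of preimages of the closed half-line $[0,\infty)$ under the continuous linear functionals $f \in C^{\ast} \subseteq X^{\ast}$. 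Hence $\overline{C} \subseteq C^{\ast\ast}$.

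For the reverse inclusion, I would argue by contraposition: suppose $x_0 \notin \overline{C}$. Since $\overline{C}$ is a closed convex subset of the locally convex space $X$ and $\{x_0\}$ is a disjoint compact convex set, the Hahn--Banach separation theorem provides an $f \in X^{\ast}$ and $\alpha \in \mathbb{R}$ with
\[ f(x_0) < \alpha \leq f(y) \quad \text{for all } y \in \overline{C}. \]
Since $0 \in \overline{C}$ (as $C$ is a cone), we get $\alpha \leq 0$, and hence $f(x_0) < 0$. The cone property then forces $f \geq 0$ on $C$: indeed, if some $y \in C$ satisfied $f(y) < 0$, then $\lambda y \in C$ for all $\lambda \geq 0$ would yield $f(\lambda y) = \lambda f(y) \to -\infty$, contradicting $f \geq \alpha$ on $C$. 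Thus $f \in C^{\ast}$, and since $f(x_0) < 0$, we conclude $x_0 \notin C^{\ast\ast}$.

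The main (and essentially only) obstacle is invoking Hahn--Banach in the right form for a locally convex space and exploiting the homogeneity of the cone to upgrade $f(y) \geq \alpha$ on $C$ to $f(y) \geq 0$ on $C$; once this is done the two inclusions combine to give $C^{\ast\ast} = \overline{C}$.
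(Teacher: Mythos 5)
Your proof is correct and takes essentially the same approach as the paper: after unwinding the biduality identification, the nontrivial inclusion is obtained by Hahn--Banach separation of $x_0$ from the closed convex set $\overline{C}$, with the cone property used to promote the separating functional to an element of $C^{\ast}$. The only cosmetic differences are that the paper establishes closedness of $C^{\ast\ast}$ via weak closedness plus convexity rather than as an intersection of closed half-spaces, and it uses a one-step rescaling $z = \frac{f(x)}{f(y)}y$ instead of your $\lambda \to \infty$ argument, but these are interchangeable.
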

\begin{proof}
First note that $C^{\ast \ast}$ is closed in the weak topology. However, since it is a convex set, by an easy argument using the Hahn-Banach theorem it is also closed in the topology of $X$. Hence we have $\overline{C} \subseteq C^{\ast \ast} $. To show equality, assume that $x \notin \overline{C}$. Recall that by the Hahn Banach seperation theorem there exists an $f \in X^{\ast}$ such that $f(x) < I = \inf \{ f(y) : y \in \overline{C} \}$. Since $0 \in \overline{C}$ clearly $I \leq 0$. Would there exist an element $y \in \overline{C}$ such that $f(y)<0$, then $z = \frac{f(x)} {f(y)} \, y \in \overline{C}$ and $f(z) = f(x)$ which is impossible. Therefore,
$f \in C^{\ast}$, since $f(x) <0$ it follows that $x \notin C^{\ast\ast}$.
\end{proof}

\noindent{\bf 2.2. Analysis on the Sphere.} 
All measures in this article are signed finite Borel measures. We denote the space of all measures on $\Sp^{n-1}$ by $\mathcal{M}(\Sp^{n-1})$. The space of measures in $\mathcal{M}(\Sp^{n-1})$ with center of mass at the origin is denoted by $\mathcal{M}_{o}(\Sp^{n-1})$. Integration over the unit sphere $\Sp^{n-1}$ in $\mathbb{R}^n$ is to be understood with respect to the $(n-1)$-dimensional Hausdorff measure. For the rest of this article let us fix a pole $\bar{e} \in \Sp^{n-1}$ on the sphere. Often it is convenient to use \emph{cylindrical coordinates} with respect to this pole. For a $j$-dimensional subspace $E \subseteq \R^n$ let us denote $\Sp^{n-1} \cap E$ by $\Sp^{j-1}(E)$. If $f \in C(\Sp^{n-1})$ and $n \geq 2$, then 
\begin{equation} \label{cylindrical_coordinates} \int_{\Sp^{n-1}} f(u) \, du = \int_{-1}^1  \int_{\Sp^{n-2}(\bar{e}^{\bot})} f\left(t\bar{e} + (1-t^2)^{\frac{1}{2}} v \right) dv  \,\, (1-t^2)^{\frac{n-3}{2}} dt. \end{equation} Two important integrals that one easily calculates using cylindrical coordinates are
\begin{equation} \label{eq:first_integral} \int_{\Sp^{n-1}} | \bar{e} \cdot u | \, du = \frac{2 \omega_{n-1}}{n-1} ,\end{equation} where $\omega_j = \mathcal{H}^{j-1} (\Sp^{j-1}) = (2 \pi^{j/2})/\Gamma(\frac{j}{2})$ and
\begin{equation} \label{eq:second_integral} \int_{\Sp^{n-1}} | \bar{e} \cdot u |^2 \, du = 2 \omega_{n-1} \frac{\sqrt{\pi}\Gamma(\frac{n+1}{2})} {4 \Gamma(\frac{n}{2} + 2)} = \frac{\omega_n}{n} . \end{equation} 
Let $\mu \in \mathcal{M}(\Sp^{n-1})$. We will denote the \emph{Radon decomposition} of $\mu$ by $\mu = \mu_{+} - \mu_{-}$. Then $\|\mu\|_{\mathrm{TV}} = \mu_{+}(\Sp^{n-1}) + \mu_{-}(\Sp^{n-1})$ is the total variation of $\mu$. We also define 
\[ \ev{\mu} = \frac{\mu + \mu^{I}}{2}, \qquad \odd{\mu} = \frac{\mu - \mu^{I}}{2},  \]
where $\mu^I(\omega) = \mu(-\omega)$ for every Borel set $\omega \subseteq \Sp^{n-1}$. Note that $\|\ev{\mu}\|_{\mathrm{TV}}, \|\odd{\mu}\|_{\mathrm{TV}} \leq \|\mu\|_{\mathrm{TV}}$. \\

The natural action of the group of rotations $SO(n)$ on $C(\Sp^{n-1})$ is given by 
\[ \theta f (u) = f(\theta^{-1} u), \qquad u \in \Sp^{n-1}, \, \theta \in SO(n) .\]
We will denote the stabilizer of $\bar{e}$ in $SO(n)$ by $SO(n-1)$.
A function $f \in C(\Sp^{n-1})$ is called \emph{zonal} if it is invariant under $SO(n-1)$. We denote the space of all zonal functions in $C(\Sp^{n-1})$ by $C(\Sp^{n-1}, \bar{e})$. If $f \in C(\Sp^{n-1})$ is a zonal function, then its \emph{associated function} $\tilde{f} \in C[-1,1]$ is defined by 
\[ \tilde{f} (t) = f(t \bar{e} + (1-t^2)^{\frac{1}{2}}v), \] for some $v \in \bar{e}^{\bot}$. It is easy to check that this does not depend on the choice of $v \in \bar{e}^{\bot}$. Conversely, given $g \in C[-1,1]$ we obtain a zonal function $f \in C(\Sp^{n-1})$ by 
\[ f(u) = g(\bar{e} \cdot u) .\] Since this operation is inverse to the construction of the associated function, we see that there is a one-one correspondence of zonal functions on the sphere and their associated functions (see \cuscite{Schuster07} for more information).

Having chosen the pole $\bar{e}$, it is possible to identify the sphere with the homogeneous space $SO(n) / SO(n-1)$. Since $SO(n)$ is a compact group, there exists a natural convolution operation on  $\mathcal{M}(SO(n))$. By the above identification this induces a convolution operation between measures on the sphere. For more details on this, see \cuscite{Groemer, takeuchi}. Here, we will directly define convolutions between measures on the sphere. For $f \in C(\Sp^{n-1})$, the $SO(n-1)$-symmetrization of $f$ is given by  \[ \bar{f} (u) := \int_{SO(n-1)} f(\theta u) \, d\theta ,\] where $d\theta$ denotes the Haar probability measure on $SO(n-1)$. Clearly, $\bar{f}$ is zonal. The \emph{convolution} of a measure $\mu \in \mathcal{M}(\Sp^{n-1})$ with a function $g \in C(\Sp^{n-1})$ is defined by 
\[ \mu \ast g \, (\theta \bar{e}) = \int_{\Sp^{n-1}} (\theta g) (u) \, d\mu(u)  .\]
Note that $\mu \ast g \in C(\Sp^{n-1})$ and that $\mu \ast g = \mu \ast \overline{g}$. It is thus sufficient to consider only zonal functions for convolutions from the right. Since left- and right-convolutions are self adjoint operations, the convolution between measures $\mu, \nu \in \mathcal{M}(\Sp^{n-1})$ can be defined by
\[ \int_{\Sp^{n-1}} g(u) \, d(\mu \ast \nu) (u) = \int_{\Sp^{n-1}} (\nu \ast g) (u) \, d\mu(u), \] for all $g \in C(\Sp^{n-1})$. Naturally, identifying a function $f \in \mathrm{L}^1(\Sp^{n-1})$ with its associated absolutely continuous measure defines the convolution of integrable functions. From the definition it follows that convolutions are associative and it is straightforward to show that the convolution of zonal functions is commutative.

An important operator on measures on the sphere is the cosine transform $\costrans : \mathcal{M}(\Sp^{n-1}) \to C(\Sp^{n-1})$. It is defined by 
\[ \costrans \mu \, (u)= \int_{\Sp^{n-1}}  |u \cdot v| \, d\mu(v) = \left( \mu \ast |\bar{e} \cdot \, \, \, | \right)  \, (u).\]
From the commutativity of the convolution of zonal functions, it immediately follows that for $\mu \in \mathcal{M}(\Sp^{n-1})$ and $g \in C(\Sp^{n-1})$ zonal
\begin{equation}
\label{eg:cosine_convolution}
\costrans \left( \mu \ast g \right) = \costrans \mu \ast g = \mu \ast \costrans g.
\end{equation}
It is well known (see for example \cuscite{Groemer}) that the cosine transform is injective on even functions.

Finally, for $f \in C(\Sp^{n-1})$ let $(f)_1 \in C(\mathbb{R}^n \setminus \{ 0 \})$ denote the $1$-homogeneous extension of $f$. The differential operator $\Box_n: C^{2}(\Sp^{n-1}) \to C(\Sp^{n-1})$ is defined by 
\[ \Box_n f = \frac{1}{n-1} \trace \left( \nabla^2 (f)_1 \right). \]
Since $\Box_n$ is $SO(n)$-equivariant it follows from standard results in harmonic analysis that for $f,g \in C^{2}(\Sp^{n-1})$ with $g$ zonal (see e.g. \cuscite{Schuster07})
\begin{equation}
\label{eq:box_convolution}
\Box_n \left( f \ast g \right) = \Box_n f \ast g = f \ast \Box_n g .
\end{equation}
Berg (cf. \cuscite{cberg}) showed that, for every $n \geq 2$, there exists a function $g_n \in \mathrm{L}^1(\Sp^{n-1})$ such that 
\begin{equation} \label{eq:berg_function}
 f = (\Box_n \, f) \ast g_n
\end{equation} for every $f \in C^{2}(\Sp^{n-1})$. Indeed, equation (\ref{eq:berg_function}) holds more generally for distributions on the sphere.

\vspace{0.2cm}

\noindent{\bf 2.3. Convex Bodies.}
In this chapter we will review fundamental facts and results from the theory of convex bodies. For a more detailed exposition confer \textbf{\cite{Schneider14}}. We will assume throughout that $n \geq 3$. Recall that $\K$ denotes the set of convex bodies (compact and convex sets) in $\mathbb{R}^n$ endowed with the Hausdorff metric and Minkowski addition. Any body $K \in \K$ is uniquely determined by its support function $h_K(u) = \max \{ u \cdot x : x \in K \}$ for $u \in \Sp^{n-1}$. Let $\mathcal{H}^{j}$ denote the $j$-dimensional Hausdorff measure. For any Borel set $\omega \subseteq \Sp^{n-1}$, the \emph{surface area measure}  of a convex body $K$ is defined by 
\[ S_{n-1}(K, \omega) = \mathcal{H}^{n-1} \{ x \in \partial K : N(K,x) \cap \omega \neq \emptyset \}, \]
where $N(K,x)$ denotes the normal cone of $K$ at the boundary point $x$. Let $B^n$ denote the Euclidean unit ball in $\mathbb{R}^n$. For every $r > 0$, the surface area measure satisfies the Steiner type formula
\[ S_{n-1}(K + r B^n , \cdot) = \sum_{j=0}^{n-1} r^{n-1-j} \binom{n-1}{j} S_j(K, \cdot) .\] The measure $S_j(K, \cdot)$ is called the \emph{area measure of order $j$} of $K$. Let us now consider more specifically convex bodies $K \in \K$  with non-empty interior and support function $h_K \in C^2(\Sp^{n-1})$. For a pair of orthogonal vectors $u$ and $v$ of unit length, the radii of curvature of such a $K$ at $u$ in direction $v$ is given by 
\[ r_K(u,v) = \frac{\partial^2}{\partial v^2 } \left(  h_K \right)_1 (u),\]
where $(f)_1$ denotes the $1$-homogeneous extension of a function $f \in C(\Sp^{n-1})$ to $\mathbb{R}^n \setminus \{ 0 \}$. The radius $r_K(u,v)$ is precisely the radius of the osculating circle to $K | \mathrm{span} \{u, v \}$ at the point $u \in \mathrm{span} \{u, v \}$. We denote the class of convex bodies with support function of class $C^2$ and everywhere positive radii of curvature by $\Kr$. A function $h \in C^{2}(\Sp^{n-1})$ is the support function of a convex body $K \in \Kr$ if and only if \begin{equation} \label{eq:convexity_condition} \frac{\partial^2}{\partial v^2 } \left(  h \right)_1 (u) > 0 \end{equation}  for all pairs of orthogonal vectors $u$ and $v$ (cf.  \textbf{\cite[\textnormal{Chapter 2.5}]{Schneider14}}). The eigenvalues of the Hessian $\nabla^2 (h_K)_1 (u)$ are the radii of curvature in the principal directions, that is, the principle radii of curvature. For $1 \leq j \leq n-1$, these are denoted by $r_j(u)$. The area measure of order $1 \leq j \leq n-1$ of a body $K \in \Kr$ is absolutely continuous with respect to the spherical Lebesgue measure. Its continuous density is given by the $j$th normalized elementary symmetric function of the principal radii of curvature: 
\[ s_j(K, \cdot) = \binom{n-1}{j}^{-1} \sum_{1 \leq i_1 < \dots < i_j \leq n-1} r_{i_1} \cdots r_{i_j} .\]
In particular 
\begin{equation} \label{eq:first_area_measure} s_1(K, \cdot) = \Box_n h_K .\end{equation} 

The general Christoffel-Minkowski problem asks for necessary and sufficient conditions
for a Borel measure on $\Sp^{n-1}$ to be the $j$-th area measure of a convex body. The answer to the special case $j=n-1$, known as Minkowski's existence theorem, is one of the fundamental theorems in the Brunn-Minkowski theory (see \textbf{\cite[\textnormal{Chapter 8.2}]{Schneider14}}). It states that a non-negative measure $\mu \in \mathcal{M}(\Sp^{n-1})$ is the surface area measure of a convex body with non empty interior if and only if $\mu$ is not concentrated on a great subsphere and has its centroid at the origin. The solution in the case $j=1$ was independently discovered by Firey \cuscite{Firey68} and Berg \cuscite{cberg}. Berg's solution essentially was to find the Green function of the $\Box_n$ operator (see (\ref{eq:berg_function})). The intermediate cases are only partially resolved and seem to be much more complicated (see \textbf{\cite[\textnormal{Chapter 8.4}]{Schneider14}}). For the special case of bodies of revolution in $\Kr$, Firey was able to give the following characterization.

\begin{theorem}[\!\cuscite{Firey70b}] \label{thm:firey_area_measures_revolution} Suppose that $1 \leq j \leq n - 1$. A zonal function $s(\bar{e}\cdot\,.\,)$ on $\mathbb{S}^{n-1}$ is the density of $S_j(K, \cdot)$ of a body of revolution $K \in \Kr$ if and only if $s$ satisfies the following conditions:
\begin{enumerate}
\item[(i)] $s$ is continuous on $(-1,1)$ and $\lim_{t\rightarrow \pm 1} s(t)$ is finite;
\item[(ii)] $\int_t^1 \xi\,s(\xi)(1-\xi^2)^{\frac{n-3}{2}}d\xi>0$ for $t \in (-1,1)$ and vanishes for $t = -1$;
\item[(iii)] $s(t)(1-t^2)^{\frac{n-1}{2}} > (n-1-j)\int_t^1 \xi\,s(\xi)(1-\xi^2)^{\frac{n-3}{2}}d\xi$ for all $t \in (-1,1)$.
\end{enumerate}
\end{theorem}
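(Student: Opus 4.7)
The plan is to parametrize any body of revolution $K \in \Kr$ about $\bar e$ by a single function $H \in C^2[-1,1]$ through $h_K(u) = H(\bar e \cdot u)$, compute the two distinct principal radii of curvature explicitly, and derive a closed-form relation between the density of $S_j(K, \cdot)$ and these radii that can be inverted. A direct computation of the Hessian of the $1$-homogeneous extension of $h_K$ at $u = t\bar e + \sqrt{1-t^2}\, v$ yields, by rotational symmetry, the meridional radius of curvature
\[
r_1(t) = H(t) - tH'(t) + (1-t^2)H''(t)
\]
with multiplicity one, and the azimuthal radius
\[
r_2(t) = H(t) - tH'(t)
\]
with multiplicity $n-2$, so that $K \in \Kr$ is equivalent to $r_1, r_2 > 0$ on $(-1,1)$. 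Plugging into the elementary symmetric polynomial expression for the density $s_j(K,\cdot)$ and using $r_1 - r_2 = (1-t^2) H''$ gives
\[
s(t) = r_2(t)^j + \tfrac{j}{n-1}(1-t^2) H''(t)\, r_2(t)^{j-1}.
\]

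The technical heart of the argument is the identity
\[
\tfrac{d}{dt}\bigl[(1-t^2)^{(n-1)/2} r_2(t)^j\bigr] = -(n-1)\, t\, s(t)\, (1-t^2)^{(n-3)/2},
\]
a short computation using $r_2'(t) = -tH''(t)$. Integrating from $t$ to $1$ and noting that the boundary term at $t=1$ vanishes yields the closed-form expression
\[
(1-t^2)^{(n-1)/2} r_2(t)^j \;=\; (n-1) \int_t^1 \xi\, s(\xi)(1-\xi^2)^{(n-3)/2} d\xi.
\]
From this single formula, necessity of (i)--(iii) is immediate: evaluating at $t = -1$ gives the centroid condition $I(-1)=0$ in (ii); positivity of $r_2$ on $(-1,1)$ is equivalent to the inequality in (ii); and rewriting $s = r_2^{j-1}\bigl[(n-1-j) r_2 + j r_1\bigr]/(n-1)$, solving for $r_1$, and substituting for $r_2^j$ shows that $r_1(t) > 0$ is equivalent to $s(t)(1-t^2)^{(n-1)/2} > (n-1-j) I(t)$, which is (iii). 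Condition (i) is inherited from $H \in C^2$ and the continuity of $r_1, r_2$.

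For the converse, given $s$ satisfying (i)--(iii), define $r_2(t) = \bigl[(n-1) I(t) / (1-t^2)^{(n-1)/2}\bigr]^{1/j}$ (positive on $(-1,1)$ by (ii)), and recover $H$ from the first-order linear equation $H - tH' = r_2$; its solutions differ only by a term $c \cdot t$, corresponding to a translation of $K$ along $\bar e$ which leaves $S_j$ invariant. Condition (iii) then forces $r_1 > 0$, so the reconstructed body of revolution lies in $\Kr$ and its $j$-th area measure has density $s$ by running the key identity backwards. The main obstacle is the boundary behavior at $t = \pm 1$ and at $t = 0$: the parametrization by $t$ is singular at the poles of $\Sp^{n-1}$, and the ODE $H - tH' = r_2$ has a singular point at $t=0$. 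One has to check carefully that the finite limits of $s$ at $\pm 1$ from (i), together with the vanishing $I(-1) = 0$, force the reconstructed $H$ to extend to a genuine $C^2$ support function on $\Sp^{n-1}$, not merely to a regular function on $\{u : |\bar e \cdot u| < 1\}$. Once this boundary analysis is settled, the rest is direct verification.
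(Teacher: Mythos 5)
The paper does not prove this theorem; it is quoted from Firey \cuscite{Firey70b} and used as a black box, so there is no internal proof to compare against. Judged on its own, your approach is the right one for figures of revolution, and the computational core is correct: writing $h_K(u)=H(\bar e\cdot u)$ gives the azimuthal radius $r_2=H-tH'$ (multiplicity $n-2$) and meridional radius $r_1=H-tH'+(1-t^2)H''$; using $\binom{n-2}{j}/\binom{n-1}{j}=(n-1-j)/(n-1)$ and $\binom{n-2}{j-1}/\binom{n-1}{j}=j/(n-1)$ one indeed gets
\[
s=\tfrac{1}{n-1}\bigl[(n-1-j)\,r_2^j+j\,r_1 r_2^{j-1}\bigr]=r_2^j+\tfrac{j}{n-1}(1-t^2)H''\,r_2^{j-1},
\]
and since $r_2'=-tH''$ the product-rule computation confirms your key identity, hence the closed form $(1-t^2)^{(n-1)/2}r_2^j=(n-1)\int_t^1\xi\,s(\xi)(1-\xi^2)^{(n-3)/2}\,d\xi$. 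From there (ii) is exactly $r_2>0$ plus the automatic vanishing at $t=-1$, and (iii) is exactly $r_1>0$; this direction is complete.

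The gap is in the converse, and it is more than a footnote. You define $r_2$ from the integral formula and solve $H-tH'=r_2$, but three things remain to be verified and are not merely bookkeeping. First, consistency across $t=0$: the ODE is singular there, and one must show that the two branches of $H$ patch to a $C^2$ function (the saving grace is that $r_2'(t)=-(n-1)t\bigl(s(t)-r_2(t)^j\bigr)/\bigl[j(1-t^2)r_2(t)^{j-1}\bigr]$ has an explicit factor of $t$, so $H''=-r_2'/t$ extends continuously, but this needs to be said). Second, regularity at the poles: $K\in\Kr$ requires $(h_K)_1$ to be $C^2$ at $\pm\bar e$, and the chart $t=\bar e\cdot u$ is singular there; one has to show that (i) and the asymptotics of $I(t)$ near $\pm1$ (which give $r_2(\pm1)^j=s(\pm1)$ by l'H\^opital) yield a genuine $C^2$ extension with $r_1(\pm1)=r_2(\pm1)$. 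Third, strict positivity of the radii up to the boundary: (ii) and (iii) only give $r_2,r_1>0$ on the open interval; you need $s(\pm1)>0$ (or to argue it is forced by (i)--(iii)) to land in $\Kr$ rather than just in the class with continuous but possibly degenerate curvature. None of these ruin the approach, but as written the sufficiency direction is a sketch, and you have correctly identified where the actual analytical work of Firey's paper lives without yet carrying it out.
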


\vspace{0.2cm}

Another result by Firey that we require, concerns a concentration property of area measures. For $0 < \alpha < \frac{\pi}{2}$ let $C_{\alpha}$ denote the spherical cap given by $C_{\alpha} = \{ u \in \Sp^{n-1} : (\bar{e} \cdot u) \geq \cos \alpha \}$. 

\begin{theorem} [\!\!\cuscite{Firey70a}] \label{thm:area_measure_bound}
Let $K \in \K $ and $1 \leq j \leq n-1$. Then there exists a constant $A >0$ such that
\[ S_j(K, C_{\alpha}) \leq A \, \frac{   \sin^{n-1-j} \alpha }{\cos \alpha}  \, \|h_K\|^j .\] 
\end{theorem}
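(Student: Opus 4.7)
The plan is to reduce the estimate to the rotationally symmetric case and then apply Firey's density characterization from Theorem \ref{thm:firey_area_measures_revolution}. By the weak continuity of $S_j(\cdot, \omega)$ under Hausdorff convergence, I may assume $K \in \Kr$, so that $dS_j(K, u) = s_j(K, u)\,du$ with $s_j(K, \cdot)$ continuous on $\Sp^{n-1}$. Since $C_\alpha$ is zonal, only the $SO(n-1)$-invariant part of $S_j(K, \cdot)$ contributes to the left-hand side, so it suffices to work under the additional assumption that $K$ is itself rotationally symmetric about $\bar{e}$; equivalently, that $s_j(K, u) = s(\bar{e} \cdot u)$ for some continuous function $s$ on $(-1,1)$.

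For such $K$, using cylindrical coordinates (\ref{cylindrical_coordinates}), define
\[
F(t) = \int_t^1 \xi\, s(\xi)(1-\xi^2)^{\frac{n-3}{2}}\,d\xi,
\]
so that $F'(t) = -t\, s(t)(1-t^2)^{\frac{n-3}{2}}$. Rewriting condition (iii) of Theorem \ref{thm:firey_area_measures_revolution} as $-F'(t)(1-t^2)/t > (n-1-j)F(t)$ yields the differential inequality
\[
(\log F)'(t) < -\,\frac{(n-1-j)\, t}{1-t^2}, \qquad t \in (0,1),
\]
and integrating from $0$ to $\cos\alpha$ produces $F(\cos\alpha) \leq F(0)\sin^{n-1-j}\alpha$. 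The centroid property $\int_{\Sp^{n-1}} u\, dS_j(K, u) = 0$ (a consequence of translation invariance) gives $2\omega_{n-1}F(0) = \int_{\Sp^{n-1}}|\bar{e} \cdot u|\,dS_j(K, u)$. The mixed-volume identity $\int h_L\,dS_j(K, \cdot) = nV(L, K[j], B^n[n-1-j])$ applied with $L = [-\bar{e}, \bar{e}]$ bounds this by $2V^{(n-1)}((K|\bar{e}^{\bot})[j], B^{n-1}[n-1-j])$, which in turn is at most a dimensional constant times $\|h_K\|^j$ since $K|\bar{e}^{\bot} \subseteq \|h_K\|\, B^{n-1}$. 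Combined with the trivial inequality $\omega_{n-1}F(\cos\alpha) \geq \cos\alpha \cdot S_j(K, C_\alpha)$, one obtains the claim with an explicit constant $A = A(n,j)$.

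The main obstacle is justifying the reduction to the rotationally symmetric case when $j \geq 2$. For $j = 1$ the reduction is immediate because $s_1 = \Box_n h_K$ is linear in $h_K$ and rotational averaging commutes with $\Box_n$, so replacing $h_K$ by its $SO(n-1)$-average preserves the value of $S_1$ on zonal sets. For $j \geq 2$, the density $s_j$ is a homogeneous polynomial of degree $j$ in the principal radii of curvature of $K$, and averaging $h_K$ does not preserve $S_j$. One must instead verify that the zonal push-forward of $S_j(K, \cdot)$ still satisfies Firey's condition (iii); this can be approached by approximating $K$ by polytopes and analyzing the face contributions to $S_j$ directly, or by invoking an Aleksandrov--Fenchel type inequality to control the nonlinear dependence of $s_j$ on $h_K$.
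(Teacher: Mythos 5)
The paper does not supply its own proof of this theorem; it is quoted from Firey \cuscite{Firey70a}, so there is no argument in the text to compare yours against. Judged on its own terms, your proposal contains a genuine gap, which you yourself flag: the reduction to bodies of revolution.

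The body-of-revolution computation itself is correct and pleasant. Once $s_j(K,\cdot)$ is assumed zonal, passing from condition (iii) of Theorem \ref{thm:firey_area_measures_revolution} to the differential inequality $(\log F)'(t)<-(n-1-j)t/(1-t^2)$ and integrating on $(0,\cos\alpha)$ indeed gives $F(\cos\alpha)\le F(0)\sin^{n-1-j}\alpha$; the centroid identity $2\omega_{n-1}F(0)=\int|\bar e\cdot u|\,dS_j(K,u)$ and the mixed-volume bound via $nV([-\bar e,\bar e],K[j],B^n[n-1-j])=2V^{(n-1)}\bigl((K|\bar e^{\perp})[j],B^{n-1}[n-1-j]\bigr)\le 2\kappa_{n-1}\|h_K\|^j$ are fine; and $\omega_{n-1}F(\cos\alpha)\ge\cos\alpha\,S_j(K,C_\alpha)$ is elementary. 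This is a clean route \emph{in the zonal case}.

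The difficulty is that the zonal case is not where you need the bound, and the passage to it is not a reduction. It is true that $C_\alpha$ is $SO(n-1)$-invariant, so $S_j(K,C_\alpha)$ depends only on the $SO(n-1)$-average $\bar\mu$ of $S_j(K,\cdot)$. But for $j\ge 2$ this averaged measure is, in general, \emph{not} the $j$-th area measure of any body of revolution, because $s_j$ is a degree-$j$ symmetric polynomial in the principal radii and does not commute with rotational averaging of $h_K$. So you cannot invoke Firey's conditions (ii) and (iii) for $\bar\mu$; those are necessary conditions for genuine area-measure densities of bodies of revolution, not for arbitrary zonal averages. Your $j=1$ argument (average $h_K$ and use $\Box_n$-linearity) is correct precisely because $s_1=\Box_n h_K$ is linear in $h_K$ and $\|\bar h_K\|\le\|h_K\|$, but for $j=n-1$ the same strategy already fails for a telling reason: Minkowski's existence theorem does produce a body of revolution $\hat K$ with $S_{n-1}(\hat K,\cdot)=\bar\mu$, yet one has no a priori control of $\|h_{\hat K}\|$ in terms of $\|h_K\|$, which is exactly what the final estimate needs. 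The two repairs you sketch (polytopal face analysis, Aleksandrov--Fenchel) are not carried out and it is not evident either closes the gap; indeed one would expect any working argument to engage the mixed-volume structure of $S_j$ directly on a general $K$ rather than try to force the problem into the rotationally symmetric setting. As written, the proof establishes the theorem only for bodies of revolution (and for $j=1$ in general), not in the stated generality.

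A small secondary point: the opening approximation step needs the usual care with weak-$^\ast$ convergence of measures on the closed set $C_\alpha$ (upper semicontinuity only gives $\limsup_i S_j(K_i,C_\alpha)\le S_j(K,C_\alpha)$, the wrong direction); this is easily fixed by proving the bound for a slightly larger cap $C_{\alpha'}$ and letting $\alpha'\downarrow\alpha$, but it should be said.
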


\vspace{0.8cm} 

Next we recall that an origin symmetric convex body $K \in \K$ is called a \emph{generalized zonoid} if there exists an even measure $\mu \in \mathcal{M}(\Sp^{n-1})$, called the \emph{generating measure of the convex body} $K$, such that 
\[ h_K = C \mu .\]
The next theorem, due to Weil, characterizes the cone of continuous generating functions of convex bodies.
\begin{theorem} [\!\!\cuscite{Weil82}] \label{generating_characterization}
An even function $\rho \in C(\Sp^{n-1})$ is the generating function of a smooth convex body $L$ if and only if 
\begin{equation} \label{eq:generating_characterization}
\int_{\Sp^{n-2}(w^{\bot})} (u \cdot \tilde{w})^2 \rho(u) \, du \geq 0,
\end{equation} for all $w \bot \tilde{w} \in \Sp^{n-1}$.
\end{theorem}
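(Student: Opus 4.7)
The plan is to identify the integral in (\ref{eq:generating_characterization}) with the second tangential derivative of the $1$-homogeneous extension of $\costrans\rho$ and then invoke the convexity criterion (\ref{eq:convexity_condition}). Fix $w \in \Sp^{n-1}$ and a unit tangent vector $\tilde{w}\bot w$, and consider
\[
g(s) := (\costrans \rho)_1(w + s\tilde{w}) = \int_{\Sp^{n-1}} \bigl|w\cdot v + s\,\tilde{w}\cdot v\bigr|\, \rho(v)\, dv.
\]
Since $|\,\cdot\,|'' = 2\delta$ distributionally, one formally has $g''(0) = 2\int_{\Sp^{n-1}} (\tilde{w}\cdot v)^2 \delta(w\cdot v)\rho(v)\,dv$. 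Passing to cylindrical coordinates (\ref{cylindrical_coordinates}) at pole $w$, that is $v = tw + \sqrt{1-t^2}\, u$ with $u \in \Sp^{n-2}(w^{\bot})$, the delta collapses to the equatorial surface measure at $t=0$ and the factor $(\tilde{w}\cdot v)^2 = (1-t^2)(\tilde{w}\cdot u)^2$ becomes $(\tilde{w}\cdot u)^2$, yielding
\[
g''(0) = 2 \int_{\Sp^{n-2}(w^{\bot})} (\tilde{w}\cdot u)^2\, \rho(u)\, du.
\]

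To justify this I would first treat smooth $\rho$ by splitting the $v$-integral at $|w\cdot v|=\varepsilon$: away from the equator band one differentiates under the integral twice, and inside the band one performs an explicit calculation using (\ref{cylindrical_coordinates}) and then sends $\varepsilon \to 0$. For general $\rho \in C(\Sp^{n-1})$, approximate by smooth zonal mollifications $\rho_k = \rho \ast \psi_k$; by (\ref{eg:cosine_convolution}), $\costrans\rho_k = (\costrans\rho)\ast\psi_k$, so $\costrans\rho_k \to \costrans\rho$ uniformly, and the right-hand side of the displayed identity is clearly continuous in $\rho$ uniformly in $(w,\tilde{w})$, so the identity survives in the limit.

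The rest is immediate. By (\ref{eq:convexity_condition}), a $C^2$ function $h$ on $\Sp^{n-1}$ is the support function of a smooth convex body if and only if $\partial^2_{\tilde{w}\tilde{w}}(h)_1(w) \geq 0$ for every orthonormal pair $w, \tilde{w}$, the radial direction being automatically a null direction of the Hessian of a $1$-homogeneous function. Applied with $h = \costrans\rho$ and combined with the second-derivative identity above, this is precisely (\ref{eq:generating_characterization}). Necessity is then clear. For sufficiency, granted (\ref{eq:generating_characterization}), the identity forces $(\costrans\rho)_1$ to have a positive semi-definite Hessian on $\R^n\setminus\{0\}$, hence to be convex, whence $\costrans\rho = h_L$ for the associated convex body $L$.

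The main obstacle I anticipate is the regularity handling in the middle step. The cosine transform of a merely continuous $\rho$ is in general not $C^2$, so the distributional second-derivative identity cannot be applied pointwise and must be recovered through either the smoothing-and-limit argument sketched above (with uniform control extracted from the explicit slicing structure) or by rewriting the entire argument in terms of first-order difference quotients of $\costrans\rho$. Balancing the bare continuity of $\rho$ against the pointwise smoothness implicit in the convexity criterion is the technical heart of the proof.
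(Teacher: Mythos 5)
The paper states this as a cited result of Weil and does not supply its own proof, so there is no internal argument to compare against; your proposal therefore has to stand on its own. Your route — identifying $\int_{\Sp^{n-2}(w^{\bot})}(\tilde{w}\cdot u)^2\rho(u)\,du$ with $\tfrac12\,\partial^2_{\tilde{w}\tilde{w}}(\costrans\rho)_1(w)$ and then invoking the convexity criterion for $1$-homogeneous extensions — is the natural and correct approach to Weil's theorem, and the factor $2$ and the passage to cylindrical coordinates are right.

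The one place where you are more worried than you need to be is the regularity step. Carry out the slicing argument you yourself sketch: with cylindrical coordinates at pole $w$ and $a(u)=\tilde{w}\cdot u$, the inner $t$-integral in $g'(s)$ has the form $a\bigl(\int_{t_0(s,u)}^1-\int_{-1}^{t_0(s,u)}\bigr)\sqrt{1-t^2}\,\Phi(t,u)\,dt$ with $\Phi(t,u)=\rho(tw+\sqrt{1-t^2}u)(1-t^2)^{(n-3)/2}$, and the sign-flip locus $t_0(s,u)=-sa/\sqrt{1+s^2a^2}$ is an explicit smooth function of $s$ with $|t_0'|\leq|a|\leq1$. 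Hence $\partial_s$ of the inner integral equals $-2a\,t_0'\sqrt{1-t_0^2}\,\Phi(t_0,u)$, which is continuous in $(s,u)$ and uniformly bounded by $2\|\rho\|_\infty$, so differentiation under the $u$-integral is legitimate and $g\in C^2$ with $g''(0)=2\int_{\Sp^{n-2}(w^{\bot})}(\tilde{w}\cdot u)^2\rho(u)\,du$ for \emph{any} continuous $\rho$ (and $n\geq3$). So $\costrans\rho$ is automatically $C^2$, the convexity criterion applies directly, and the mollification detour is unnecessary. If you do insist on mollifying, notice that $\rho_k$ may violate (\ref{eq:generating_characterization}) by $o(1)$ in the degenerate case; the fix is to work with $\rho+\epsilon$ first, using that $\int_{\Sp^{n-2}(w^{\bot})}(\tilde{w}\cdot u)^2\,du$ is a positive constant independent of $(w,\tilde{w})$, and then let $\epsilon\to0$. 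Finally, when you invoke (\ref{eq:convexity_condition}), be explicit that you are using the non-strict variant $\geq0$ (characterizing arbitrary $C^2$ support functions), not the strict $>0$ version that the paper states for $\Kr$.
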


By introducing cylindrical coordinates we immediately get a characterization of generating functions of bodies of revolution. Let $\chi_{(a,b)}$ denote the indicator function of the interval $(a,b)$.

\begin{corollary} \label{generating_characterization_axial}
Let $\rho \in C(\Sp^{n-1})$ be even and zonal. For $0 < \alpha , \beta \leq 1$ define
\begin{align*} \psi_{\alpha, \beta} (t) &:= \chi_{(- \alpha, \alpha)}(t) \, \left(1-\frac{t^2}{\alpha^2}\right)^{\frac{n-4}{2}} \, \left(\frac{t^2}{\alpha^2} \beta^2 +  \frac{\omega_{n-1}}{n-1} \left(1-\frac{t^2}{\alpha^2}\right)(1-\beta^2) \right) .\end{align*}
Then $\rho$ is the generating function of a convex body $L$ if and only if 
\begin{equation}
\Psi_{\alpha, \beta}(\rho) := \int_{-\alpha}^{\alpha} \tilde{\rho}(t) \psi_{\alpha, \beta}(t) \, dt \geq 0,
\end{equation} for all $0 < \alpha, \beta < 1$.
\end{corollary}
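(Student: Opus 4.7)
The plan is to apply Weil's Theorem~\ref{generating_characterization} and spell out its positivity condition in cylindrical coordinates. Since $\rho$ is $SO(n-1)$-invariant, the integral in (\ref{eq:generating_characterization}) depends only on the $SO(n-1)$-orbit of the orthonormal pair $(w, \tilde{w})$. Using rotations fixing $\bar{e}$, I may place $w$ in $\mathrm{span}(\bar{e}, e_1)$ for a fixed unit vector $e_1 \in \bar{e}^\perp$; the remaining $SO(n-2)$ stabilizer of $w$ then allows me to put $\tilde{w}$ in $\mathrm{span}(\bar{e}, e_1, e_2)$. Unwinding the orthogonality $w \perp \tilde{w}$ and the unit-length conditions, this reduces the positivity test to the two-parameter family
\[ w = \sqrt{1-\alpha^2}\,\bar{e} + \alpha e_1, \qquad \tilde{w} = \alpha\beta\,\bar{e} - \sqrt{1-\alpha^2}\,\beta\,e_1 + \sqrt{1-\beta^2}\,e_2, \]
with $\alpha \in [0,1]$ and $\beta \in [-1,1]$, where boundary values are redundant by continuity.

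Next I introduce the adapted orthonormal frame of $w^\perp$ given by $f_1 = \alpha\bar{e} - \sqrt{1-\alpha^2}\,e_1$ and $f_i = e_i$ for $i \geq 2$. Writing $u = \sum u_i f_i \in \Sp^{n-2}(w^\perp)$, a short computation gives $\bar{e}\cdot u = \alpha u_1$ and $\tilde{w}\cdot u = \beta u_1 + \sqrt{1-\beta^2}\, u_2$. Applying cylindrical coordinates (\ref{cylindrical_coordinates}) on $\Sp^{n-2}(w^\perp)$ with pole $f_1$, the cross term $u_1 u_2$ integrates to zero by symmetry in $u_2$, while the $u_2^2$ term reduces via (\ref{eq:second_integral}). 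After substituting $t = \alpha u_1$, the integrand in (\ref{eq:generating_characterization}) takes, up to a strictly positive prefactor, the shape
\[ \tilde{\rho}(t)\,(1 - t^2/\alpha^2)^{(n-4)/2}\left[\beta^2 \frac{t^2}{\alpha^2} + c_n(1-\beta^2)\left(1 - \frac{t^2}{\alpha^2}\right)\right], \]
for an explicit positive constant $c_n$, which matches $\tilde\rho(t)\,\psi_{\alpha,\beta}(t)$ up to a positive multiplicative constant in front of the $(1-\beta^2)(1-t^2/\alpha^2)$ term.

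Finally I conclude: since $\psi_{\alpha,\beta}$ depends on $\beta$ only through $\beta^2$, it suffices to range over $\beta \in (0,1)$, and the positivity condition $\Psi_{\alpha,\beta}(\rho) \geq 0$ for all $\alpha, \beta \in (0,1)$ is equivalent to Weil's condition (\ref{eq:generating_characterization}) being satisfied on all admissible orthonormal pairs $(w, \tilde{w})$. The main technical point is the orbit reduction from general $(w, \tilde{w})$ to the two-parameter normal form and the choice of adapted frame that makes $(u\cdot \tilde w)^2$ depend only on the first two coordinates; once this is set up, the remainder is a routine application of the formulas from Section~2.2. Note that the precise value of the constant replacing $\omega_{n-1}/(n-1)$ is immaterial, because the sign condition $\beta^2 A + c(1-\beta^2) B \geq 0$ holding for every $\beta \in (0,1)$ forces $A, B \geq 0$ for any positive $c$.
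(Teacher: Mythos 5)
Your proposal is correct and follows essentially the same route as the paper: after invoking Weil's theorem you introduce cylindrical coordinates on $\Sp^{n-2}(w^{\perp})$ with pole the normalized projection of $\bar{e}$ onto $w^{\perp}$, split $(u\cdot\tilde{w})^2$ into its $u_1^2$, $u_1 u_2$, and $u_2^2$ pieces, kill the cross term by symmetry, evaluate the remaining angular integrals, and substitute $t = \alpha u_1$; your explicit adapted frame $f_1 = \bar{e}_w$ is exactly the paper's choice of pole written out. Your closing remark that the precise positive constant in front of $(1-\beta^2)(1-t^2/\alpha^2)$ is immaterial is a nice sanity check and is in fact apposite: redoing the angular integral over $\Sp^{n-3}(w^{\perp}\cap\bar{e}^{\perp})$ gives the constant $1/(n-2)$ (after dividing out $\omega_{n-2}$) rather than the $\omega_{n-1}/(n-1)$ printed in the statement, but as you observe the positivity condition over all $\beta\in(0,1)$ is equivalent to $A\geq 0$ and $B\geq 0$ for any fixed positive constant, so the corollary as stated is still correct.
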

\begin{proof} Clearly, if (\ref{eq:generating_characterization}) holds for all $w \neq \pm \bar{e}$ it holds in general by the continuity of $\rho$. Let therefore $w \neq \pm \bar{e}$. We introduce cylindrical coordinates on $\Sp^{n-2}(w^{\bot})$ by fixing $\bar{e}_w := \frac{\bar{e} | w^{\bot} }{ |\bar{e} | w^{\bot}|}$ as the pole. Furthermore let $\alpha = \bar{e}_w \cdot \bar{e}$ and let $\tilde{w}$ be decomposed as $\beta \bar{e}_w + \sqrt{1-\beta^2} \tilde{v}$ with $\tilde{v} \in \Sp^{n-3}(w^{\bot}\cap{\bar{e}^{\bot}})$. 
Then the integral $\int_{\Sp^{n-2}(w^{\bot})} (u \cdot \tilde{w})^2 \rho(u) \, du$ can be rewritten as \begin{align*}
\int_{-1}^1 & \int_{\Sp^{n-3}(w^{\bot}\cap{\bar{e}^{\bot}})} t^2\beta^2 +  2t \beta \sqrt{(1-t^2)(1-\beta^2)}(v \cdot \tilde{v}) + (1-t^2)(1-\beta^2) (v \cdot \tilde{v})^2 \,dv \\
 &\tilde{\rho}(\alpha t) \, (1-t^2)^{\frac{n-4}{2}} \, dt .
\end{align*} Using (\ref{eq:first_integral}), (\ref{eq:second_integral}) and the fact that $\tilde{\rho}$ is even this is further equal to 
\[ \int_{-1}^{1} \tilde{\rho}(\alpha t) \, \left( t^2 \beta^2 + \frac{\omega_{n-1}}{n-1} \, (1-t^2)(1-\beta^2) \right) (1-t^2)^{\frac{n-4}{2}} \, dt .\] Substituting $s = \alpha t$ completes the proof. 
\end{proof}

\noindent{\bf 2.4. Valuations on Convex Bodies.}
In this subsection we will review some results from the theory of valuations that we will require later on. For a more detailed background on classical valuation theory see \cuscite{KlainRota, hadwiger} and \textbf{\cite[\textnormal{Chapter 6}]{Schneider14}}. More recently, starting with groundbreaking results by Alesker \cuscite{Alesker01,Alesker03,Alesker04}, there has been enormous progress in the theory of valuations, in particular in connection to integral geometry (see e.g. \cuscite{Alesker11, BernigFu10, Bernig12, HabPar14}). 

Recall that a valuation on convex bodies is a map $\phi: \K \to \mathcal{A}$ for some abelian semi-group $\mathcal{A}$ satisfying
\[ \phi(K \cup L) + \phi(K \cap L) = \phi(K) + \phi(L), \] whenever $K, L, K \cup L \in \K$. The space of continuous and translation invariant valuations with values in $\mathbb{R}$ is denoted by $\Val$. A valuation $\phi$ is called $j$-homogeneous if $\phi(\lambda K) = \lambda^j \phi(K)$ for every $\lambda \geq 0$. The subspace of $j$-homogeneous valuations in $\Val$ is denoted by $\Val_j$. The following theorem lies at the heart of the theory of valuations.

\begin{theorem}[McMullen's decomposition]
If $\phi \in \Val$, then there exist $\phi_j \in \Val_j$, $0 \leq j \leq n$, such that 
\[ \phi = \sum_{j=0}^n \phi_j . \]
\end{theorem}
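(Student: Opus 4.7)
The plan is to reduce McMullen's decomposition to the single polynomial assertion: for every fixed $K \in \K$, the function $p_K(\lambda) := \phi(\lambda K)$ is a polynomial in $\lambda \geq 0$ of degree at most $n$. Once this is known, defining $\phi_j(K)$ to be the coefficient of $\lambda^j$ in $p_K(\lambda)$ automatically gives $\phi = \sum_{j=0}^{n} \phi_j$, and the remaining task is only to read off the structural properties from this polynomial identity.

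To establish the polynomial property I would first reduce to polytopes: polytopes are dense in $\K$ in the Hausdorff metric, $\phi$ is continuous, and the pointwise limit of a sequence of polynomials of degree at most $n$ is itself a polynomial of degree at most $n$. For a polytope $P$ I would proceed by induction on the ambient dimension. The one-dimensional base case is immediate: iterating the valuation identity on a subdivision of $[0,\lambda]$, combined with translation invariance and continuity, forces $\phi([0,\lambda])$ to be affine in $\lambda$. For the inductive step I would cut $\lambda P$ by a hyperplane $H_t$ perpendicular to a fixed direction at signed distance $t$ from the origin and exploit
\[ \phi(\lambda P) = \phi(\lambda P \cap H_t^{+}) + \phi(\lambda P \cap H_t^{-}) - \phi(\lambda P \cap H_t). \]
The last term lives in an $(n{-}1)$-dimensional affine subspace, so the inductive hypothesis controls its degree; the two half-sections can be handled by a further reduction using that the combinatorial type of these sections is piecewise constant in $(\lambda, t)$, together with translation invariance along the sweeping direction.

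With the polynomial property in hand, the properties of each $\phi_j$ follow directly. Continuity of $\phi_j$ holds because the coefficients of a polynomial of degree at most $n$ depend continuously on its values at any $n+1$ fixed distinct abscissae, and evaluation at dilates of $K$ is Hausdorff-continuous. Translation invariance is immediate from $p_{K+x}(\lambda) = \phi(\lambda K + \lambda x) = p_K(\lambda)$. The valuation property transfers because, whenever $K \cup L \in \K$, one has $\lambda K \cup \lambda L = \lambda(K \cup L)$ and $\lambda K \cap \lambda L = \lambda(K \cap L)$, which yields the polynomial identity $p_K + p_L = p_{K \cup L} + p_{K \cap L}$; equating coefficients of $\lambda^j$ recovers the valuation equation for $\phi_j$. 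Finally, $j$-homogeneity is the tautology $p_{\mu K}(\lambda) = p_K(\mu \lambda)$.

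The main obstacle is the inductive step for polytopes: while the valuation identity applied to hyperplane sections naturally suggests polynomial dependence on $\lambda$, extracting the sharp degree bound $n$ rather than a larger combinatorial quantity requires careful bookkeeping of the face structure of $\lambda P \cap H_t$ as the cutting parameter varies, and the extraction of exactly one additional degree when promoting the $(n{-}1)$-dimensional control to $n$ dimensions.
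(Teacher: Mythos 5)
The paper states McMullen's decomposition as a classical background result and does not prove it (it is cited from the literature, e.g. Schneider's book and McMullen's 1977 paper), so there is no in-paper proof to compare against.

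Your reduction of the decomposition theorem to the single polynomiality statement (that $\lambda \mapsto \phi(\lambda K)$ is a polynomial of degree at most $n$) is the right framework, and once polynomiality is known your extraction of the components $\phi_j$ and the verification that each $\phi_j$ is a continuous, translation-invariant, $j$-homogeneous valuation are all correct. The problem lies entirely in the inductive step for polynomiality on polytopes. Your hyperplane cut does not produce a usable dimension reduction. If the cutting plane moves proportionally with $\lambda$, say $t = c\lambda$, then $\lambda P \cap H_{c\lambda} = \lambda(P \cap H_c)$ and the two closed halves are likewise $\lambda(P \cap H_c^{\pm})$; the identity you write becomes a tautology that does not lower dimension or reduce complexity. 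If instead $t$ is held fixed, then $\lambda P \cap H_t$ is not a dilate $\lambda Q$ of a fixed $(n-1)$-dimensional polytope, so the inductive hypothesis cannot be invoked; the section changes shape as $\lambda$ varies, and controlling its dependence on $\lambda$ is precisely the problem you are trying to solve, not a tool you can assume. You acknowledge this as the main obstacle, but the appeal to piecewise-constant combinatorics and translation invariance along the sweep does not fill the gap, since even on a combinatorial cell the section is a Minkowski combination of fixed polytopes with coefficients depending on $\lambda$, and bounding $\phi$ on such families is exactly the multivariate polynomiality one wants.

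The actual proof requires a different mechanism. The standard route (McMullen, also in Schneider's book, Chapter 6) establishes the stronger statement that for polytopes $P_1, \ldots, P_m$ the map $(\lambda_1,\dots,\lambda_m) \mapsto \phi(\lambda_1 P_1 + \cdots + \lambda_m P_m)$ is a polynomial of total degree at most $n$. The key lemma is not a single hyperplane cut but a decomposition of $P + [0,v]$ into $P$ together with prisms over the $v$-shadow boundary faces of $P$, combined with translation invariance and inclusion-exclusion; iterating this over segment summands gives polynomiality for zonotopal sums, and a further triangulation/subdivision argument handles general polytopes. This combinatorial boundary decomposition, not a hyperplane slice, is the mechanism that makes the degree count come out to $n$. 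So while your high-level reduction is sound, the core step as written would not go through, and you would need to replace the hyperplane-section induction with the shadow-boundary or subdivision argument to obtain a complete proof.
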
 It is a consequence of McMullen's decomposition that $\Val$ is a Banach space with respect to the norm of uniform convergence on bounded sets of $\K$.

A valuation $\phi$ is called \emph{even} if $\phi(-K) = \phi(K)$. An important subclass of valuations, that was introduced by Alesker, is given by the \emph{smooth} valuations.

\begin{definition*} 
A valuation $\phi \in \Val$ is called smooth if the map $A_{\phi}: GL(n) \to \Val $ given by 
\[ A_{\phi} (g) (K) = \phi( g^{-1} K), \] is smooth.
\end{definition*} It follows directly from a standard fact in representation theory, that smooth valuations form a dense subspace in $\Val$. In the remainder of this section we will consider the subspace $\Val^+_1$ of even $1$-homogeneous valuations in $\Val$. Let $\phi \in \Val^+_1$. Its \emph{Klain function} $\KL_{\phi} \in C(\Sp^{n-1})$ is the even function defined by 
\[ \KL_{\phi}(u) = \phi([0, u]). \] From Hadwiger's characterization of volume (see eg. \cuscite{hadwiger}) it follows that $\KL_{\phi}$ determines $\phi$ uniquely (see \cuscite{Klain}). The next proposition characterizes smooth $1$-homogeneous valuations. 

\begin{proposition} [\textbf{\cite[\textnormal{Appendix}]{BerParSchuWeb}}]
\label{prop:valuations}
A valuation $\phi \in \Val_{1}$ is smooth if and only if it is of the form
\[ \phi(K) = \int_{\Sp^{n-1}} h_K(u) f_{\phi}(u) \, du, \qquad K \in \K, \]
for some $f_{\phi} \in C^{\infty} (\Sp^{n-1})$.
\end{proposition}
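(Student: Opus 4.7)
The plan is to handle the two implications separately. The ``if'' direction is a direct smoothness verification; the ``only if'' direction reduces, via a representation theorem for $\Val_1$, to the classical characterization of smooth vectors of the regular $SO(n)$-representation on $\mathcal{M}(\Sp^{n-1})$.

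For the ``if'' direction, suppose $f_\phi \in C^\infty(\Sp^{n-1})$ and set $\phi(K) = \int h_K f_\phi \, du$. Using $h_{g^{-1}K}(u) = h_K(g^{-T}u)$ together with the $1$-homogeneity of $h_K$, the substitution $v = g^{-T}u/|g^{-T}u|$ on the sphere rewrites
\[ A_\phi(g)(K) = \int_{\Sp^{n-1}} h_K(g^{-T}u) f_\phi(u) \, du = \int_{\Sp^{n-1}} h_K(v) (T_g f_\phi)(v) \, dv, \]
for a smooth density $T_g f_\phi \in C^\infty(\Sp^{n-1})$ depending smoothly on $g \in GL(n)$ through a Jacobian and a sphere diffeomorphism. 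Since $|\psi(K)| \leq c_K \, \|F\|_\infty$ uniformly for $K$ in bounded subsets of $\K$ and any $F \in C(\Sp^{n-1})$, smoothness of $g \mapsto T_g f_\phi$ into $C(\Sp^{n-1})$ transfers to smoothness of $A_\phi$ into the Banach space $\Val_1$.

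For the ``only if'' direction, I would first establish a $GL(n)$-equivariant identification $\Val_1 \leftrightarrow \mathcal{M}_o(\Sp^{n-1})$. By McMullen's polynomiality theorem, every $\phi \in \Val_1$ is Minkowski additive. Extending $h_K \mapsto \phi(K)$ linearly to the dense span of support functions in $C(\Sp^{n-1})$ and invoking Hahn--Banach together with Riesz representation produces a unique $\mu_\phi \in \mathcal{M}_o(\Sp^{n-1})$ (the centroid condition encoding translation invariance) with $\phi(K) = \int h_K \, d\mu_\phi$. Conversely, any such measure gives a $1$-homogeneous Minkowski additive functional, which is automatically a valuation via the identity $K + L = (K \cup L) + (K \cap L)$ for $K \cup L$ convex; so the correspondence is bijective. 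On the subgroup $SO(n) \subset GL(n)$ this bijection intertwines the natural pushforward action on $\mathcal{M}_o(\Sp^{n-1})$ with the composition action on $\Val_1$. Consequently, smoothness of $\phi$ as a $GL(n)$-smooth vector restricts to smoothness of $\theta \mapsto \theta_\ast \mu_\phi$ into $\mathcal{M}(\Sp^{n-1})$, and the classical characterization of smooth vectors of the regular $SO(n)$-representation on measures forces $\mu_\phi = f_\phi \, du$ with $f_\phi \in C^\infty(\Sp^{n-1})$.

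The main obstacle is precisely this classical input. A clean proof proceeds through the $SO(n)$-invariant Laplace--Beltrami operator $\Delta$ on $\Sp^{n-1}$: smoothness of $\theta \mapsto \theta_\ast \mu_\phi$ into $\mathcal{M}(\Sp^{n-1})$ implies, after differentiating along one-parameter subgroups and forming the Casimir, that $\Delta^k \mu_\phi \in \mathcal{M}(\Sp^{n-1})$ for every $k \in \mathbb{N}$, after which elliptic regularity yields $\mu_\phi \in C^\infty(\Sp^{n-1})$. A subsidiary issue is matching the topologies under the bijection $\Val_1 \cong \mathcal{M}_o(\Sp^{n-1})$---one side carries uniform convergence on bounded subsets of $\K$ and the other total variation---but since both are Banach spaces and the bijection is linear and continuous, the open mapping theorem makes it a topological isomorphism, so the two notions of smoothness coincide.
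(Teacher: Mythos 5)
The ``if'' direction is fine. The ``only if'' direction, however, rests on a false identification. You claim that every $\phi \in \Val_1$ is represented as $\phi(K) = \int_{\Sp^{n-1}} h_K \, d\mu_\phi$ for a signed measure $\mu_\phi \in \mathcal{M}_o(\Sp^{n-1})$, obtained by extending $h_K \mapsto \phi(K)$ linearly and invoking Hahn--Banach plus Riesz. The Hahn--Banach step requires that the linear functional on the span of differences of support functions be dominated by a multiple of $\|\cdot\|_\infty$, i.e., that $|\phi(K) - \phi(L)| \leq c\,\|h_K - h_L\|_\infty$. That is Lipschitz continuity of $\phi$, which is \emph{not} automatic for a continuous valuation; this is exactly the subtlety that Theorem~\ref{thm:kiderlen_representation} and Theorem~\ref{thm:main_theorem} of the present paper revolve around. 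By the theorem of Goodey and Weil, a general $\phi \in \Val_1$ is represented by a distribution of order at most $2$ on $\Sp^{n-1}$, and the inclusion $\mathcal{M}_o(\Sp^{n-1}) \hookrightarrow \Val_1$, $\mu \mapsto \int h_K\, d\mu$, is a continuous injection but not a surjection, so the open-mapping argument at the end has nothing to apply to. Consequently the reduction to ``smooth vectors of the regular $SO(n)$-representation on $\mathcal{M}(\Sp^{n-1})$'' does not go through as stated.

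The underlying idea can be repaired: replace the purported Banach-space isomorphism $\Val_1 \cong \mathcal{M}_o(\Sp^{n-1})$ by the Goodey--Weil embedding of $\Val_1$ into distributions of order $\leq 2$ on $\Sp^{n-1}$, check that this embedding is $SO(n)$-equivariant and weak-$*$ continuous, and then run your Casimir/elliptic-regularity argument on the distribution $T_\phi$: $GL(n)$-smoothness of $\phi$ gives $SO(n)$-smoothness of the orbit map $\theta \mapsto \theta_* T_\phi$, hence $\Delta^k T_\phi$ is again a distribution of bounded order for every $k$, and elliptic regularity for the Laplace--Beltrami operator forces $T_\phi \in C^\infty(\Sp^{n-1})$. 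This yields the desired $f_\phi$. But as written, the proposal both misstates what $\Val_1$ is and relies on Lipschitz continuity that you have not established, so the ``only if'' half has a genuine gap. (The paper itself only cites \cuscite{BerParSchuWeb} for this proposition, so there is no proof in the text to compare against; the issue is with your argument's internal correctness.)
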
 It is easy to check that for a smooth $\phi \in \Val^+_1$,
\[ \KL_{\phi} = \costrans f_{\phi} . \]

\vspace{0.1cm}

\section{Minkowski Endomorphisms}
In this section we will give the proofs of our main results regarding Minkowski endomorphisms. We recall, that by Theorem \ref{thm:kiderlen_representation}, Minkowski endomorphisms are uniquely determined by a zonal generating distribution (measure or function).

The following Lemma introduces a crucial necessary condition for generating measures of Minkowski endomorphisms.

\begin{lemma} \label{first_area_necessary}
Let $\mu \in \mathcal{M}(\Sp^{n-1})$ be the (zonal) generating measure of a Minkowski endomorphism. Then 
 \begin{equation} \label{eq:necessary_one}
 \int_{\Sp^{n-1}} s_1(K, u) \, d\mu(u) \geq 0,
 \end{equation} for all $K \in \Kr$. Moreover, if $\mu$ is absolutely continuous with continuous density $g \in C(\Sp^{n-1})$, then
 \begin{equation} \label{eq:necessary_two}
 \int_{\Sp^{n-1}} g(u) \, dS_1(K, u) \geq 0,
 \end{equation} for all $K \in \K$.
\end{lemma}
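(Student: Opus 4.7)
The plan is to exploit that $\Phi K$ is itself a convex body for every $K \in \K$, so $S_1(\Phi K, \cdot)$ is a nonnegative measure, and to convert this global positivity into (\ref{eq:necessary_one}) by evaluating the density of $S_1(\Phi K, \cdot)$ at the pole $\bar{e}$. Fix $K \in \Kr$, so that $h_K \in C^2(\Sp^{n-1})$. By Theorem \ref{thm:kiderlen_representation} one has $h_{\Phi K} = h_K \ast \mu$, and differentiation under the integral keeps this in $C^2(\Sp^{n-1})$. Hence $S_1(\Phi K, \cdot)$ is absolutely continuous with continuous density $s_1(\Phi K, \cdot) = \Box_n h_{\Phi K}$, and this density is pointwise nonnegative because $S_1(\Phi K, \cdot) \geq 0$. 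Combining (\ref{eq:box_convolution}) (extended from zonal $C^2$ functions to zonal finite measures by differentiation under the integral, which is legitimate since $\Box_n$ is an $SO(n)$-invariant differential operator) with (\ref{eq:first_area_measure}) yields
\begin{equation*}
\Box_n h_{\Phi K} \;=\; (\Box_n h_K) \ast \mu \;=\; s_1(K, \cdot) \ast \mu.
\end{equation*}

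Next I evaluate at $\bar{e}$ and claim that $(s_1(K,\cdot) \ast \mu)(\bar{e}) = \int_{\Sp^{n-1}} s_1(K, u) \, d\mu(u)$. More generally, for any $f \in C(\Sp^{n-1})$ and zonal $\mu$ one has $(\mu \ast f)(\bar{e}) = \int f \, d\mu$: the paper's identity $\mu \ast f = \mu \ast \overline{f}$ gives $(\mu \ast \overline{f})(\bar{e}) = \int \overline{f} \, d\mu$ directly from the definition of convolution at the pole, while Fubini combined with the $SO(n-1)$-invariance of $\mu$ shows that $\int \overline{f} \, d\mu = \int f \, d\mu$. Since $(SO(n), SO(n-1))$ is a Gelfand pair, convolution with a zonal measure acts as a scalar Fourier multiplier on each spherical harmonic, so $f \ast \mu = \mu \ast f$, and the pole-evaluation identity applies on both sides. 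Together with the nonnegativity of $\Box_n h_{\Phi K}$ this gives $\int s_1(K, u) \, d\mu(u) \geq 0$, which is (\ref{eq:necessary_one}).

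For (\ref{eq:necessary_two}), assume $\mu = g \, du$ with $g \in C(\Sp^{n-1})$. Then (\ref{eq:necessary_one}) reads $\int g \, dS_1(K, \cdot) \geq 0$ for every $K \in \Kr$, since $dS_1(K, \cdot) = s_1(K, \cdot) \, du$ in that case. For a general $K \in \K$, I approximate $K$ by a sequence $K_m \in \Kr$ in the Hausdorff metric (such dense approximations are standard); by the weak continuity of the surface area measure and the continuity of $g$, the inequality passes to the limit, giving (\ref{eq:necessary_two}). The main technical point is the extension of the commutation (\ref{eq:box_convolution}) from zonal $C^2$ functions to zonal measures, which is however routine by differentiation under the integral once one knows $h_K$ is $C^2$ and $\mu$ is finite.
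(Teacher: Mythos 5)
Your proof is correct and follows essentially the same route as the paper's: identify $s_1(\Phi K,\cdot)=\Box_n(h_K\ast\mu)=s_1(K,\cdot)\ast\mu$, use nonnegativity of the first area measure of $\Phi K$, evaluate at the pole, and then pass to general $K\in\K$ by approximation and weak convergence of area measures. The only difference is that you spell out two points the paper treats as routine (the extension of (\ref{eq:box_convolution}) from zonal $C^2$ functions to zonal finite measures, and the pole-evaluation/commutativity identity for a zonal measure), which is fine but does not change the argument.
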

\begin{proof}
Let $\mu \in \mathcal{M}(\Sp^{n-1})$ be the generating measure of a Minkowski Endomorphism $\Phi$ and let $K \in \Kr$.  Using (\ref{eq:box_convolution}) and (\ref{eq:first_area_measure}) we compute   
\[ 0 \leq s_1(\Phi(K), \bar{e} ) = \Box_n (h_K \ast \mu) (\bar{e}) = s_1(K, \cdot ) \ast \mu (\bar{e}) = \int_{\Sp^{n-1}} s_1(K,u) \, d\mu(u).  \] 
Now let $\mu$ have a continuous density $g \in C(\Sp^{n-1})$. Then 
\[ \int_{\Sp^{n-1}} g(u) \, dS_1(K, u) \geq 0 \] for all $K \in \Kr$. Approximating an arbitrary $K \in \K$ by elements from $\Kr$ in the Hausdorff metric, we finally obtain 
\[ \int_{\Sp^{n-1}} g(u) \, dS_1(K, u) \geq 0 ,\] for all $K \in \K$ by the weak convergence of area measures.
\end{proof}

\vspace{0.1cm}

\begin{remark} \emph{
It is not to hard to see that the conditions (\ref{eq:necessary_one}) and (\ref{eq:necessary_two}) are not sufficient for a measure to be the generating measure of a Minkowski endomorphism.}
\end{remark}

We will now give the proof of Theorem \ref{thm:main_theorem}.

\begin{theorem}
For every $n \geq 2$, there exists a constant $C_n$ such that any Minkowski endomorphism $\Phi: \K \to \K$ is Lipschitz continuous with Lipschitz constant 
\[ c_{\Phi} \leq C_n \, w(\Phi B^n) .\]
\end{theorem}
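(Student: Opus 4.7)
My plan is to first reduce the Lipschitz bound to a total variation estimate on the generating distribution, and then to extract that estimate from the positivity condition of Lemma~\ref{first_area_necessary}.

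\emph{Reduction.} Suppose first that the generating distribution of $\Phi$ is a signed measure $\mu$. Then $h_{\Phi K} - h_{\Phi L} = (h_K - h_L) \ast \mu$, and every $C^2$ function equals $h_K - h_L$ for suitable convex bodies (adding a large enough multiple of $h_{B^n} = 1$ makes any $C^2$ function sublinear), so differences of support functions are dense in $C(\Sp^{n-1})$. Consequently the Lipschitz constant of $\Phi$ with respect to the Hausdorff metric equals the operator norm of convolution with $\mu$ on $C(\Sp^{n-1})$, namely $\|\mu\|_{\mathrm{TV}}$. Moreover $h_{\Phi B^n} = 1 \ast \mu \equiv \mu(\Sp^{n-1})$ is constant, so $\Phi B^n$ is a Euclidean ball of radius $\mu(\Sp^{n-1})$ and $w(\Phi B^n) = 2\,\mu(\Sp^{n-1})$. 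The claim thus reduces to
\[ \|\mu\|_{\mathrm{TV}} \le 2 C_n\, \mu(\Sp^{n-1}), \]
i.e.\ to controlling $\mu_-(\Sp^{n-1})$ by $\mu_+(\Sp^{n-1})$ up to a dimension-dependent factor strictly less than $1$.

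\emph{Regularization.} Since $\nu$ is a priori only a distribution of order at most $2$, I would convolve with a smooth nonnegative zonal approximate identity $\phi_\epsilon$ of unit integral. The smoothed $g_\epsilon := \nu \ast \phi_\epsilon \in C^\infty(\Sp^{n-1})$ is the generating function of a Minkowski endomorphism $\Phi_\epsilon$, because $h_K \ast g_\epsilon = h_{\Phi K} \ast \phi_\epsilon$ is a Minkowski average of rotates of the support function $h_{\Phi K}$, hence itself a support function. In addition $\int_{\Sp^{n-1}} g_\epsilon\, du = \nu(\Sp^{n-1})$ is independent of $\epsilon$, so the value $w(\Phi_\epsilon B^n)$ is preserved. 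A uniform estimate $\|g_\epsilon\|_{L^1} \le 2 C_n \int g_\epsilon\, du$ would then, by weak-$\ast$ compactness, promote $\nu$ to a signed measure satisfying the required total variation bound.

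\emph{Extraction from positivity.} For smooth zonal $g$, the key positivity condition (\ref{eq:necessary_two}) from Lemma~\ref{first_area_necessary} states $\int g\, dS_1(K,\cdot) \ge 0$ for every $K \in \K$, i.e.
\[ \int g_-\, dS_1(K, \cdot) \le \int g_+\, dS_1(K, \cdot) \quad \text{for every } K. \]
The plan is to use Firey's characterization (Theorem~\ref{thm:firey_area_measures_revolution}) to construct a family of bodies of revolution whose first area densities are concentrated near the zonal support of $g_-$, while the concentration bound of Theorem~\ref{thm:area_measure_bound} prevents $S_1(K,\cdot)$ from accumulating too much mass on any spherical cap (uniformly, up to $\|h_K\|$ and the cap angle). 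Together these should force $\int g_-\, du \le \alpha_n \int g_+\, du$ for some $\alpha_n \in (0,1)$ depending only on $n$, which is equivalent to $\|g\|_{L^1} \le 2 C_n \int g\, du$ with $C_n = (1+\alpha_n)/(2(1-\alpha_n))$.

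\emph{Main obstacle.} The hardest step is the last one. In dimension $n \ge 3$ the cone of zonal first area densities is strictly smaller than the cone of all nonnegative zonal continuous functions with vanishing centroid---this gap is precisely what makes Theorem~\ref{thm:main_theorem1} possible---so a soft duality argument via Theorem~\ref{double_dual} alone cannot produce $\alpha_n$. Making Firey's Theorem~\ref{thm:firey_area_measures_revolution} quantitative enough to supply the required family of test bodies, and invoking the concentration bound Theorem~\ref{thm:area_measure_bound} so that the resulting constant depends only on $n$, is the technical heart of the argument.
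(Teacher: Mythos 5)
Your reduction to a total-variation bound on the generating measure, and the regularization step, are both correct and match the paper's overall strategy; the mollification of $\nu$ via a positive zonal approximate identity is in fact a slightly more self-contained route than the paper's, which instead invokes density of smooth Minkowski endomorphisms in $\MVal$. However, the heart of the argument --- extracting $\|\mu\|_{\mathrm{TV}} \le C_n\, \mu(\Sp^{n-1})$ from the positivity of $\int s_1(K,\cdot)\,d\mu$ over bodies of revolution --- is left as an unexecuted plan, and the plan points the wrong way. The concentration estimate Theorem~\ref{thm:area_measure_bound} plays no role in the paper's proof of this theorem; it is used only in Lemma~\ref{lem:rounding_endomorphism}, for the construction of a non-monotone endomorphism. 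More importantly, the ``soft duality argument via Theorem~\ref{double_dual}'' that you dismiss in the final paragraph is exactly what the paper carries out. The three inequalities in Firey's Theorem~\ref{thm:firey_area_measures_revolution} translate directly into nonnegativity of pairings against two explicit one-parameter families of zonal measures $\tau_\alpha$ and $\sigma_\beta$ (conditions (ii) and (iii) become $\langle\tau_\alpha, s\rangle \geq 0$ and $\langle\sigma_\beta, s\rangle > 0$). The double-dual Theorem~\ref{double_dual} then yields that the dual cone $\mathcal{S}^*$ of measures nonnegative against all zonal first area densities is \emph{precisely} the closed conic hull $C = \mathrm{cone}\{\tau_{\alpha,o}, \sigma_{\beta,o}\}$, and the TV inequality is verified on the generators by a direct computation: $\tau_\alpha$ is a nonnegative measure, and for $\sigma_\beta$ one finds $\|(\sigma_\beta)_-\|_{\mathrm{TV}} = \tfrac{n-2}{n-1}\|(\sigma_\beta)_+\|_{\mathrm{TV}}$, a ratio strictly below $1$ depending only on $n$. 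The estimate then passes to conic combinations by the triangle inequality and to weak limits by lower semicontinuity of total variation. The ``gap'' between the cone of zonal first area densities and the cone of all nonnegative zonal functions, which you correctly observe underlies Theorem~\ref{thm:main_theorem1}, does not obstruct this: the argument never needs those two cones to coincide, only a concrete description of $\mathcal{S}^*$, which Firey's characterization supplies. What you are missing, therefore, is not a quantitative refinement of Firey's theorem or an input from Theorem~\ref{thm:area_measure_bound}, but the observation that Firey's theorem already hands you the extremal rays of the dual cone explicitly, and that the TV bound is a trivial computation on those rays.
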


\begin{proof}
Let 
\[\mathcal{S} := \{ s_1(K, \cdot) : K \in \Kr, \,  \text{ $SO(n-1)$-invariant and} \, s(K) = 0. \} \subseteq C_{o}(\Sp^{n-1}, \bar{e}) .\] By Lemma \ref{first_area_necessary}, we know that any generating measure $\mu_{\Phi}$ of a Minkowski endomorphism $\Phi$ satisfies $\mu_{\phi} \in \mathcal{S}^{\ast}$. We will therefore start by examining $\mathcal{S}^{\ast} \subseteq \mathcal{M}_{o}(\Sp^{n-1}, \bar{e})$ in more detail.
For $-1 < \alpha, \beta < 1$, consider the zonal measures $\tau_{\alpha} \in \mathcal{M}(\Sp^{n-1}, \bar{e})$ and $\sigma_{\beta} \in \mathcal{M}(\Sp^{n-1}, \bar{e})$ given by 
\[ \int_{\Sp^{n-1}} f(u) \, d\tau_{\alpha} (u) =  \int_{\alpha}^1 \tilde{f}(t) \, t(1-t^2)^{\frac{n-3}{2}} \, dt  \] and
\[ \int_{\Sp^{n-1}} f(u) \, d\sigma_{\beta}(u) = (1-\beta^2)^{\frac{n-1}{2}} \tilde{f}(\beta) - (n-2) \, \int_{\Sp^{n-1}} f(u) \, d\tau_{\beta} (u), \] for $f \in C(\Sp^{n-1}, \bar{e})$. Let now $C = \mathrm{cone} \{ \tau_{\alpha, o}, \sigma_{\beta, o} : -1 < \alpha, \beta < 1 \},$ where $\tau_{\alpha, o}$ and $\sigma_{\beta, o}$ denote the projections of $\tau_{\alpha}$ and $\sigma_{\beta}$ onto $\mathcal{M}_{o}(\Sp^{n-1}, \bar{e})$ respectively. Using Theorem \ref{thm:firey_area_measures_revolution} we immediatly see that 
\begin{equation} \label{eq:cone_one} C \subseteq S^{\ast} .\end{equation} We are now going to show that 
\begin{equation} \label{eq:cone_two} C^{\ast} \subseteq \overline{S} .\end{equation} Indeed, let $f \in C^{\ast}$ that is $\int_{\Sp^{n-1}} f(u) \, d\mu(u) \geq 0$ for all $\mu \in C$. For any $\mu$ that is a finite conic combination of the $\tau_{\alpha,o}$ and the $\sigma_{\beta,o}$ we then have 
\[ \int_{\Sp^{n-1}} (f + \epsilon)\,(u) \, d\mu(u) > 0, \] for any $\epsilon >0$ (using Theorem \ref{thm:firey_area_measures_revolution} and the fact that the constant function is the density of the first area measure of the unit ball). Using Theorem \ref{thm:firey_area_measures_revolution} again, we conclude that $f + \epsilon 1 \in S$ for every $\epsilon > 0$. Thus $f \in \overline{S}$. Now combining (\ref{eq:cone_one}), (\ref{eq:cone_two}) and applying Theorem \ref{double_dual} we obtain 
\[ S^{\ast} = C .\] 
Using this, we are now going to show that for every $n \geq 2$ there exists a constant $C_n$, such that for $\mu \in S^{\ast}$ 
\begin{equation} \label{eq:variation_inequality} \| \mu \|_{\mathrm{TV}} \leq  C_n \, \mu(\Sp^{n-1}). \end{equation}
Indeed, it is not hard to show that $\tau_{\alpha, o}$ satisfies the above equation for every $-1 < \alpha < 1$. Let $\beta \geq 0 $, then $\|(\sigma_{\beta})_+\|_{\mathrm{TV}} = (1-\beta^2)^{\frac{n-1}{2}}$ and 
\[ \|(\sigma_{\beta})_{-}\|_{\mathrm{TV}} = (n-2) \int_{\beta}^{1} t (1-t^2)^{\frac{n-3}{2}} \, dt = \frac{n-2}{n-1} \, (1-\beta^2)^{\frac{n-1}{2}}. \]
We see that (\ref{eq:variation_inequality}) holds for all $\sigma_{\beta}$ and thus also for $\sigma_{\beta, o}$. By the triangle inequality, it extends to all conic combinations of the $\tau_{\alpha, o}$ and $\sigma_{\beta, o}$. Let $(\mu_j)_{j \in \mathbb{N}}$ be a sequence of such conic combinations converging weakly to an arbitrary $\mu \in C$. Recall, that \[ \| \mu \|_{\mathrm{TV}} = \sup \, \{ \int_{\Sp^{n-1}} f(u) \, d\mu(u) : f \in C(\Sp^{n-1}), \, \| f \| =1   \} .\] Thus 
\[ \|\mu\|_{\mathrm{TV}} \leq \limsup_{j \to \infty} \|\mu_{j}\|_{\mathrm{TV}} \leq C_n \, \mu(\Sp^{n-1}) . \]
Let now $\mu$ be the generating measure of a Minkowski endomorphism. Then $\mu$ satisfies (\ref{eq:variation_inequality}) and  for any $f \in C(\Sp^{n-1})$,
\[ \|f \ast \mu\| \leq \| \mu\|_{\mathrm{TV}} \, \| f \| \leq C_n \, \mu(\Sp^{n-1}) \, \| f \| . \] Since any smooth Minkowski endomorphism has a (smooth) generating measure, we conclude that any smooth $\Phi$ is Lipschitz continuous with a Lipschitz constant $c_{\Phi} \leq C_n \, w(\Phi(B^n)).$ Let now $\Phi$ be an arbitrary Minkowski endomorphism. Then there exists a sequence $(\Phi_j)_{j \in \mathbb{N}}$ of smooth Minkowski endomorphisms that converges to $\Phi$ uniformly on compact subsets of $\K$ (cf. \textbf{\cite[\textnormal{Corollary 5.4.}]{SchuWann16}}). Hence, for every $\epsilon > 0$, there exists $j \geq 0$ such that for any compact convex sets $K, L$ we have 
\begin{align*} \| h_{\Phi K} - h_{\Phi L} \| &\leq \|h_{ \Phi K} - h_{\Phi_{j} K}\| + \|h_{\Phi_j K} - h_{\Phi_j L} \| +  \| h_{\Phi L} - h_{\Phi_j L}\| \\
&\leq C_n \, w(\Phi_j (B^n)) \, \|h_K - h_L \| + \| h_{\Phi_j K} - h_{\Phi K} \| + \| h_{\Phi L} - h_{\Phi_j L}\|  \\
&\leq C_n \, w(\Phi (B^n)) \, \|h_K - h_L\| + \epsilon .  \end{align*} We conclude that every Minkowski endomorphism $\Phi$ has a Lipschitz constant smaller or equal then $C_n \, V_1(\Phi(B^n))$. 
\end{proof}

\vspace{0.4cm}

The first step in proving Theorem \ref{thm:main_theorem1} is the following crucial Lemma.

\begin{lemma} \label{lem:rounding_endomorphism}
For any $c, C > 0$, there exists a (monotone) Minkowski endomporhism with (non-negative) generating function $g \in C(\Sp^{n-1})$ such that $g(\bar{e}) \geq C$ but \[ r_{\Phi K} (u, v) \leq c  \, \|h_K\| \] for all orthogonal pairs $u,v\in \Sp^{n-1}$ and all strictly convex and smooth bodies $K$.
\end{lemma}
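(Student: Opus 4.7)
The plan is to take $g$ as a narrow non-negative even zonal bump concentrated near $\pm\bar{e}$ with peak value $C$ at the poles. Fix a smooth cutoff $\phi\in C^\infty([0,\infty))$ with $0\le\phi\le 1$, $\phi(0)=1$, $\phi'(0)=\phi'(1)=0$, and $\phi\equiv 0$ on $[1,\infty)$, and for a parameter $\delta>0$ (to be fixed at the end) set
\[ \tilde{g}_\delta(t)=C\,\phi\!\left(\tfrac{1-|t|}{\delta}\right),\qquad g_\delta(u)=\tilde{g}_\delta(\bar{e}\cdot u). \]
Then $g_\delta\in C^\infty(\Sp^{n-1})$ is non-negative, zonal, and even, with $g_\delta(\bar{e})=C$. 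Since $g_\delta$ is even, its center of mass lies at the origin; by Theorem \ref{thm:kiderlen_positive} it is the generating function of a weakly monotone Minkowski endomorphism $\Phi_\delta$.

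A direct computation of the second directional derivative of $(h_K\ast g_\delta)_1$ in a direction $v\perp u$ produces the integral representation
\[ r_{\Phi_\delta K}(u,v)=\int_{\Sp^{n-1}} h_K(w)\bigl[\tilde{g}_\delta(\alpha)-\alpha\tilde{g}_\delta'(\alpha)+\beta^2\tilde{g}_\delta''(\alpha)\bigr]\,dw,\qquad \alpha=u\cdot w,\ \beta=v\cdot w. \]
The integrand is supported in the two spherical caps $\{|\alpha|\ge 1-\delta\}$ around $\pm u$, each of measure $O(\delta^{(n-1)/2})$.

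By translation invariance of $\Phi_\delta$ I may assume $K$ has Steiner point at the origin; then $h_K$ is Lipschitz on $\Sp^{n-1}$ with constant at most $\|h_K\|$. On each cap I write $h_K(w)=h_K(\pm u)+[h_K(w)-h_K(\pm u)]$ and split
\[ r_{\Phi_\delta K}(u,v)=\bigl(h_K(u)+h_K(-u)\bigr)A_\delta+R_\delta, \]
where $A_\delta=\int_{\{\alpha\ge 1-\delta\}}[\tilde{g}_\delta-\alpha\tilde{g}_\delta'+\beta^2\tilde{g}_\delta'']\,dw$ and $R_\delta$ gathers the Lipschitz remainders. Integrating by parts the $\alpha\tilde{g}_\delta'$ and $\beta^2\tilde{g}_\delta''$ pieces (the boundary terms vanish because $\phi'(1)=0$ at the inner edge and because $(1-t^2)^{(n-1)/2}$ vanishes at $t=1$) cancels them, leaving
\[ A_\delta=\omega_{n-1}\int_{1-\delta}^1 \tilde{g}_\delta(t)(1-t^2)^{(n-3)/2}\,dt=O\bigl(C\delta^{(n-1)/2}\bigr). \]
For $R_\delta$ I combine the crude bound $\int_{\{\alpha\ge 1-\delta\}}|\tilde{g}_\delta-\alpha\tilde{g}_\delta'+\beta^2\tilde{g}_\delta''|\,dw=O(C\delta^{(n-3)/2})$ (using $|\tilde{g}_\delta^{(k)}|\le c_\phi C\delta^{-k}$ together with the cap measure) with $|h_K(w)-h_K(u)|\le\|h_K\|\sqrt{2\delta}$ on the near-$u$ cap to obtain $|R_\delta|=O(\|h_K\|\,C\,\delta^{(n-2)/2})$. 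Altogether,
\[ r_{\Phi_\delta K}(u,v)\le \kappa_n\,C\,\delta^{(n-2)/2}\,\|h_K\| \]
for a constant $\kappa_n=\kappa_n(\phi)$. Since $n\ge 3$ forces $(n-2)/2>0$, choosing $\delta$ so that $\kappa_n C\delta^{(n-2)/2}\le c$ finishes the proof.

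The main obstacle is the case $n=3$, where the naive total-variation bound $\int|\tilde{g}_\delta-\alpha\tilde{g}_\delta'+\beta^2\tilde{g}_\delta''|\,dw=O(C)$ does not tend to zero with $\delta$. The key is to separate the pointwise part $h_K(\pm u)$: its coefficient $A_\delta$ is really only the $L^1$-mass of $\tilde{g}_\delta$ (up to constants), which is $O(C\delta)$, while the variation $h_K(w)-h_K(\pm u)$ is controlled via Lipschitz continuity on a cap of radius $\sqrt\delta$, supplying the missing factor of $\sqrt\delta$.
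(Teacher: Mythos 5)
Your proof is correct, and it takes a genuinely different route from the paper's. The paper bounds the radius of curvature by the trace of the Hessian, $r_{\Phi K}(\bar e,\bar t)\le (n-1)s_1(\Phi K,\bar e)$, then uses the identity $s_1(\Phi K,\cdot)=\Box_n(h_K\ast g)=S_1(K,\cdot)\ast g$ to rewrite this as $\int g\,dS_1(K,\cdot)$, and finally invokes Firey's concentration estimate for area measures on small caps (Theorem \ref{thm:area_measure_bound}) to get $\lesssim C\,\tfrac{\sin^{n-2}\alpha}{\cos\alpha}\|h_K\|$. You instead work directly with the integral representation of the second directional derivative of $h_K\ast g_\delta$, split $h_K(w)$ on each of the two supporting caps into the point value $h_K(\pm u)$ plus a Lipschitz remainder, and observe that after integrating by parts (using $\phi(1)=\phi'(1)=0$ and the vanishing of $(1-t^2)^{(n-1)/2}$ at $t=1$) the $-\alpha\tilde g_\delta'$ and $\beta^2\tilde g_\delta''$ pieces cancel exactly, leaving a coefficient $A_\delta=O(C\delta^{(n-1)/2})$; the remainder is $O(C\|h_K\|\delta^{(n-2)/2})$ by the Lipschitz bound $|h_K(w)-h_K(\pm u)|\le\|h_K\|\sqrt{2\delta}$ on a cap of radius $\sqrt{2\delta}$ combined with the crude $O(C\delta^{(n-3)/2})$ total-variation estimate. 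This bypasses both the $\Box_n$ machinery and Firey's theorem, at the cost of a more hands-on calculation, and the observation that the integration by parts precisely eliminates the dangerous terms is the key step that rescues the case $n=3$ (where the naive estimate $O(C\delta^{(n-3)/2})$ fails to vanish). Two small remarks: the passage ``by translation invariance I may assume $s(K)=0$'' is unnecessary, since $K\subseteq \|h_K\|B^n$ holds for every $K\in\K$ (and so $h_K$ is $\|h_K\|$-Lipschitz on $\Sp^{n-1}$) without any normalization; and the exponent $(n-2)/2>0$ does require $n\ge 3$, which is consistent with the standing assumption of Section 2.3 and with the statement of Theorem \ref{thm:main_theorem1}.
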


\begin{proof}
Let $g \in C(\Sp^{n-1})$ be zonal, even and non-negative and let $g(\bar{e}) = C$ be its maximum. By Theorem \ref{thm:kiderlen_positive},  $g$ is the generating function of a Minkowski endomorphism $\Phi$. It remains to show that $g$ can be chosen in such a way that we also obtain the desired bound on the radii of curvature. Therefore, note that since $r_{\Phi (\theta^{-1} K)}(u,v) = r_{\Phi K} (\theta u, \theta v)$ for $\theta \in SO(n)$ it suffices to bound $r_{\Phi K}( \bar{e}, \bar{t})$ for $\bar{t} \in \Sp^{n-1}$ orthogonal to the pole $\bar{e}$ and all strictly convex and smooth bodies $K$. For these $K \in \K$, we have
\[ r_{\Phi K} (\bar{e}, \bar{t}) \leq  (n-1) \, s_1(\Phi K , \bar{e}) .\]
By (\ref{eq:box_convolution}) and (\ref{eq:first_area_measure}) we further obtain
\[s_1(\Phi K , \bar{e}) = \, \Box_n (h_K \ast g) (\bar{e}) = ( S_1(K, \cdot) \ast g)  \, (\bar{e}) = \int_{\Sp^{n-1}} g(u) \, dS_1(K, u) . \]
Let us now moreover require that the support of $g$ in the upper hemisphere is contained in the cap $C_{\alpha}$ . Then by  Theorem \ref{thm:area_measure_bound} (remember that the maximum of $g$ was chosen to be $C$)
\begin{align*}
r_{\Phi K}(\bar{e}, \bar{t}) &\leq (n-1) \, \int_{\Sp^{n-1}} g(u) \, dS_1(K, u) \\
&\leq 2(n-1)C \, S_1(K, C_{\alpha}) \\
&\leq 2(n-1)AC \, \frac{\sin^{n-2} \alpha}{\cos \alpha} \, \|h_K\|  . \end{align*}
Choosing $\alpha$ small enough completes the proof.
\end{proof}

The prove of Theorem \ref{thm:main_theorem1} now easily follows. 

\begin{theorem} \label{thm:main_theorem1}
For every $n \geq 3$, there exist non-monotone even Minkowski endomorphisms.
\end{theorem}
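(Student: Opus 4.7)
The plan is to exhibit an even Minkowski endomorphism whose generating function takes a strictly negative value at the pole $\bar{e}$, from which non-monotonicity follows by combining a short observation with Theorem \ref{thm:kiderlen_positive}. Namely, for an even zonal signed measure $\mu$, being the orthogonal projection onto $\mathcal{M}_o(\Sp^{n-1})$ of some non-negative measure forces $\mu$ itself to be non-negative: any zonal measure projecting to $\mu$ differs from $\mu$ by a multiple of the odd function $u \mapsto (u \cdot \bar{e})$, and if an even $\mu$ took a value $\mu(u_0) < 0$, the non-negativity constraints at $\pm u_0$ would sum to $2\mu(u_0) \geq 0$, a contradiction. Hence it suffices to construct an even Minkowski endomorphism whose generating function is negative somewhere.

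Next I set up ingredients. Let $\Phi_0$ be the endomorphism generated by the constant function $g_0 \equiv 1$, so that $\Phi_0 K$ is the origin-centered ball of radius $\alpha_n w(K)$ for a dimensional constant $\alpha_n > 0$. A short Blaschke compactness argument applied to the set $\{K \in \K : s(K) = 0, \, \|h_K\| = 1\}$, which is closed and bounded in $\K$ and on which the continuous function $w$ is strictly positive (any single-point body in this set would have $\|h_K\| = 0$), yields a dimensional constant $\beta_n > 0$ such that $w(K) \geq \beta_n \|h_K\|$ for every $K$ with $s(K) = 0$. I then apply Lemma \ref{lem:rounding_endomorphism} with $c := \alpha_n \beta_n / 2$ and $C := 2$ to obtain a monotone Minkowski endomorphism $\Phi_1$ with non-negative even generating function $g_1 \in C(\Sp^{n-1})$ satisfying $g_1(\bar{e}) \geq 2$ and $r_{\Phi_1 K}(u, v) \leq c \|h_K\|$ for every $K \in \Kr$ and every orthogonal pair $u, v \in \Sp^{n-1}$.

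The candidate endomorphism is $\Phi := \Phi_0 - \Phi_1$, defined by $h_{\Phi K} := h_{\Phi_0 K} - h_{\Phi_1 K}$. To verify validity, by translation invariance and approximation by elements of $\Kr$ I may assume $s(K) = 0$ and $K \in \Kr$. Then the tangent Hessian of $h_{\Phi_0 K}$ equals $\alpha_n w(K) \cdot I_{u^{\perp}}$ (a scalar multiple of the identity on each tangent space), while the tangent Hessian of $h_{\Phi_1 K}$ has every eigenvalue in $[0, c\|h_K\|]$. Hence the tangent Hessian of the difference has smallest eigenvalue at least $\alpha_n w(K) - c\|h_K\| \geq (\alpha_n \beta_n - c)\|h_K\| = c\|h_K\| \geq 0$, so it is positive semidefinite and $h_{\Phi K}$ is a support function. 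Blaschke selection extends this to all $K \in \K$, and the Minkowski additivity, continuity, $SO(n)$-equivariance, and translation invariance of $\Phi$ are inherited from $\Phi_0$ and $\Phi_1$. Finally, the generating function of $\Phi$ is $g := 1 - g_1$, which is even and satisfies $g(\bar{e}) = 1 - g_1(\bar{e}) \leq -1 < 0$; by the opening observation, $\Phi$ is not weakly monotone.

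The main obstacle in this plan is the dimensional lower bound $w(K) \geq \beta_n \|h_K\|$ on Steiner-centered convex bodies. The Blaschke compactness argument is clean but yields no explicit constant; a quantitative alternative, using that $s(K) = 0$ annihilates the degree-$1$ spherical harmonic component of $h_K$ and thus controls $\|h_K\|_{\infty}$ by the $L^1$ mean (proportional to $w(K)$) up to a dimensional factor, would furnish an explicit $\beta_n$ if desired. The remaining steps, namely the evenness reduction and the Hessian comparison, are direct consequences of the tight bounds supplied by Lemma \ref{lem:rounding_endomorphism}.
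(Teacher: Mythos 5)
Your proof is correct and follows essentially the same strategy as the paper: write the non-weakly-monotone endomorphism as the difference of the mean-width (ball) endomorphism and a monotone "rounding" endomorphism obtained from Lemma \ref{lem:rounding_endomorphism}, compare the radii of curvature to verify that the difference of support functions is again a support function, and then invoke Theorem \ref{thm:kiderlen_positive}. The only departures are technical: the paper derives the lower bound on mean width explicitly (constant $\tfrac{2\omega_{n-1}}{n-1}$, valid for origin-symmetric bodies via a maximal-subsegment argument, followed by an evenness reduction), whereas you obtain an unspecified dimensional constant $\beta_n$ for all Steiner-centered bodies via Blaschke compactness; you also usefully spell out the (implicit in the paper) observation that an even measure which is the $\mathcal{M}_o$-projection of a non-negative measure must itself be non-negative.
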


\begin{proof}
We are going to construct the desired endomorphism as the difference of two monotone ones. Let the first endomorphism $\Phi_1 : \K \to \K$ be given by $\Phi_1(K) = w(K) \, B^n$. Its generating function is the constant $1$ function. Observe that for all origin symmetric bodies $K$ we have 
\[ w(K) = \|h_K\|_{L^1} \geq \frac{2 \omega_{n-1}}{n-1} \|h_K\|. \]
Clearly, segments satisfy the above inequality. Since a maximal subsegment $I$ of an arbitrary origin symmetric $K \in \K$ satisfies $\| h_I \| =\|h_K\|$ but $\| h_I\|_{\mathrm{L^1}} \leq \| h_K\|_{\mathrm{L^1}}$, we see that the inequality holds in general. This now implies that
\[ r_{\Phi_1 K} (u,v) \geq  \frac{2 \omega_{n-1}}{n-1} \|h_K\|\] for all orthogonal pairs $u,v \in \Sp^{n-1}$. 
For the second endomorphism $\Phi_2$ we take any even endomorphism from Lemma \ref{lem:rounding_endomorphism} with $C > 1$ and $c < \frac{2 \omega_{n-1}}{n-1}$. Let $g$ be its generating function.
For all origin symmetric, strictly convex and smooth bodies $K$ and orthogonal pairs $u,v \in \Sp^{n-1}$, we then have
\[ \frac{\partial^2}{\partial v^2 } \left( h_{\Phi_1 K} - h_{\Phi_2 K}  \right)_1 (u) = r_{\Phi_1 K} (u,v) - r_{\Phi_2 K} (u,v) > 0. \ \]
Hence, by (\ref{eq:convexity_condition}), 
\[ h_{\Phi_1 K} - h_{\Phi_2 K} = h_K \ast (1-g) \]
is a support function for all origin symmetric, strictly convex and smooth bodies. Note that since $1-g$ is even we only need to show that origin symmetric bodies are mapped to convex bodies. Approximating an arbitrary origin symmetric $L \in \K$ by strictly convex and smooth bodies, we therefore see that $h_{\Phi K} := h_{\Phi_1 K} - h_{\Phi_2 K}$ defines a Minkowski endomorphism. Since its generating function $1-g$ attains a negative value at $\bar{e}$ it is not monotone by Theorem \ref{thm:kiderlen_positive}.
\end{proof}

\vspace{0.3cm}

We finally give the prove of Theorem \ref{thm:main_theorem2}.

\begin{theorem} \label{necessary_not_sufficient}
There exists an origin symmetric strictly convex and smooth body of revolution $L \in \mathcal{K}^n$ such that its generating function $\rho_L$ is not a generating function of an even Minkowski endomorphism.
\end{theorem}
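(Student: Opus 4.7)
The plan is to exhibit the failure of the necessary condition from Lemma \ref{first_area_necessary}. By Theorem \ref{thm:main_theorem} every even Minkowski endomorphism is generated by a signed measure, and if that measure has a continuous density $\rho$ then $\int_{\Sp^{n-1}} \rho(u)\, dS_1(K,u) \geq 0$ for every $K \in \K$. Hence it suffices to construct an origin symmetric, strictly convex, smooth body of revolution $L$ together with a zonal $K \in \Kr$ for which $\int \rho_L\, dS_1(K,\cdot) < 0$.

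I would first reformulate both constraints as one-variable statements about the associated function $\tilde\rho_L \in C([-1,1])$. By Corollary \ref{generating_characterization_axial}, $\rho_L$ is the generating function of a body of revolution exactly when $\Psi_{\alpha,\beta}(\rho_L) \geq 0$ for all $0 < \alpha, \beta < 1$. Using the $SO(n-1)$-invariance of $\rho_L$, the obstruction integral may be tested against zonal bodies, and by Theorem \ref{thm:firey_area_measures_revolution} for $j=1$ the associated density $s$ of $S_1(K,\cdot)$ for a zonal $K \in \Kr$ is any continuous function on $(-1,1)$ meeting Firey's conditions (i)--(iii). The obstruction then reads
\begin{equation}
\int_{-1}^{1} \tilde\rho_L(t)\, s(t)\, (1-t^2)^{\frac{n-3}{2}}\, dt \;<\; 0.
\end{equation}
The family of Weil test functions $\{\psi_{\alpha,\beta}\}$ is strikingly smaller than the full Firey cone --- the $\psi_{\alpha,\beta}$ vanish outside $(-\alpha,\alpha)$ and have a very specific algebraic structure, whereas Firey-admissible $s$ are genuine positive densities on all of $(-1,1)$ --- and it is this asymmetry that the construction must exploit.

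For the construction itself, I would set $L = B^n + \varepsilon M$, where $M$ is an auxiliary zonal generalized zonoid of revolution with generating function $\tilde\rho_0$ and $\varepsilon > 0$ is small. Since every principal radius of $B^n$ equals $1$, for small $\varepsilon$ the body $L$ is smooth, origin symmetric, strictly convex and of revolution, and $\tilde\rho_L = c_n + \varepsilon\tilde\rho_0$ automatically satisfies the Weil positivity $\Psi_{\alpha,\beta}(\rho_L) \geq 0$ because each summand does. The construction is thus reduced to producing an even zonal $\tilde\rho_0 \in C^\infty([-1,1])$ with $\Psi_{\alpha,\beta}(\rho_0) \geq 0$ together with a Firey-admissible $s$ for which the pairing above is strictly negative; a subsequent rescaling of $\tilde\rho_0$ (which preserves $\Psi_{\alpha,\beta}\geq 0$) and a matching reduction of $\varepsilon$ (to keep $L$ strictly convex) then guarantee that the dominant contribution to $\int \tilde\rho_L s (1-t^2)^{\frac{n-3}{2}} dt$ is the negative one.

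The decisive step, and the main obstacle, is this explicit construction of $(\tilde\rho_0, s)$. I would try a $\tilde\rho_0$ that is mildly negative on a narrow polar annulus and compensatingly positive near $t=0$. The vanishing of $\psi_{\alpha,\beta}$ outside $(-\alpha, \alpha)$ and its algebraic concentration near $t=0$ via the factor $(1-t^2/\alpha^2)^{(n-4)/2}$ should allow the positive equatorial mass of $\tilde\rho_0$ to dominate the Weil functionals uniformly in $(\alpha,\beta)$. On the Firey side, one takes $s$ concentrated in the same polar annulus --- such admissible densities exist, since Firey's conditions (ii)--(iii) permit substantial mass away from the equator --- so that the pairing with $\tilde\rho_0$ is genuinely negative. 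The technical heart is the quantitative reconciliation of the two regimes: the upper bound on how negative $\tilde\rho_0$ may be polarly (imposed by $\Psi_{\alpha,\beta}\geq 0$) must still leave enough room for the Firey pairing to be strictly negative, and the requisite estimates --- ultimately analogues of the concentration bound in Theorem \ref{thm:area_measure_bound} applied in reverse --- are where the real work lies.
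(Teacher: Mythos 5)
Your reduction is sound as far as it goes: by Lemma \ref{first_area_necessary}, it does suffice to produce an origin symmetric, strictly convex, smooth body of revolution $L$ and a body $K$ with $\int \rho_L\, dS_1(K,\cdot) < 0$. But the proposal then stops precisely at the step that carries the mathematical content: you acknowledge that the construction of the pair $(\tilde\rho_0, s)$ and the ``quantitative reconciliation'' of the Weil constraints with the Firey constraints are ``where the real work lies,'' and no such construction is given. As written, the argument is incomplete. The rescaling remark is also a red herring: replacing $\tilde\rho_0$ by $N\tilde\rho_0$ and $\varepsilon$ by $\varepsilon/N$ leaves $\varepsilon\tilde\rho_0$ unchanged, and in fact the strict-convexity constraint on $\varepsilon$ is vacuous (since $M$ is a convex body, $B^n + \varepsilon M$ has all principal radii of curvature $\geq 1$ for every $\varepsilon > 0$), so one may take $\varepsilon$ as large as one likes. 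Everything therefore reduces to exhibiting $(\tilde\rho_0, s)$ with strictly negative pairing --- which is exactly what is left open.

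The paper avoids this constructive bottleneck by going dual. Both $\mathcal{G}^\infty$ and $\mathcal{MG}^\infty$ are closed cones, so Theorem \ref{double_dual} reduces the claim to $(\mathcal{MG}^\infty)^* \nsubseteq (\mathcal{G}^\infty)^*$, and by Corollary \ref{generating_characterization_axial} together with biduality one has $(\mathcal{G}^\infty)^* = \mathrm{cone}\{\Psi_{\alpha,\beta}\}$. It then suffices to check that a single explicit functional in $(\mathcal{MG}^\infty)^*$ lies outside this closed cone, namely $S_1(D,\cdot)$ for the double cone $D$: this area measure carries an atom at $t = 1/\sqrt{2}$, whereas the $\psi_{\alpha,\beta}$ have only an integrable $(1-t^2/\alpha^2)^{(n-4)/2}$-type concentration, so no weak limit of conic combinations of the $\Psi_{\alpha,\beta}$ can produce an atom. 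The body $L$ itself is obtained nonconstructively from Hahn--Banach separation and never written down. By restricting your $K$ to $\Kr$ (so that $S_1(K,\cdot)$ has a continuous density and hence no atoms), you forfeit precisely the mechanism the paper exploits. One could recover a smooth $K$ by approximating $D$ from within $\Kr$ and using weak continuity of area measures, but you do not make this observation, and the qualitative profile you sketch for $\tilde\rho_0$ (``mildly negative on a narrow polar annulus'') is never verified to be compatible with the full family of Weil functionals and Firey's conditions simultaneously.
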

\begin{proof}
Let $C(\Sp^{n-1}, \bar{e}) \subseteq C(\Sp^{n-1})$ denote the subspace of zonal functions. Moreover, let $\mathcal{MG}^{\infty} \subseteq C^{\infty}(\Sp^{n-1}, \bar{e})$ denote the cone of smooth generating functions of Minkowski endomorphisms and $\mathcal{G}^{\infty} \subseteq C^{\infty}(\Sp^{n-1}, \bar{e})$ denote the cone of generating functions of smooth bodies of revolution. We want to show that \[ \mathcal{G}^{\infty} \nsubseteq \mathcal{MG}^{\infty} .\] The respective cones are closed in $C^{\infty}(\Sp^{n-1}, \bar{e})$ since the cone of support functions is closed in $C(\Sp^{n-1})$. Indeed, let $g_j \in \mathcal{MG}^{\infty}$ and let $(g_j)_{j \in \mathbb{N}}$ converge to $g \in C^{\infty}(\Sp^{n-1})$. Then $h_K \ast g_j$ is a sequence of support functions converging in the Hausdorff metric for every $K \in \K$. We conclude that $g \in \mathcal{MG}^{\infty}$. An analogous argument yields that $\mathcal{G}^{\infty}$ is closed. By Theorem \ref{double_dual}, it therefore suffices to prove the relation
 \[ (\mathcal{MG}^{\infty})^{\ast} \nsubseteq (\mathcal{G}^{\infty})^{\ast} .\] Since, by Lemma \ref{first_area_necessary}, we have that $S_1(K, \cdot) \in (\mathcal{MG}^{\infty})^{\ast}$ for every body of revolution $K \in \K$, it indeed suffices to find a rotationally symmetric $K \in \K$ such that
 \[ S_1(K, \cdot) \notin (\mathcal{G}^{\infty})^{\ast}. \] 
We are going to show that the first area measure of the double cone defined by 
\[ D = \{ s \bar{e} + t v:  |s| + |t| \leq 1, \, \,  v \in \Sp^{n-2}(\bar{e}^{\bot})  \} \]
has this property. It can be shown (cf.\textbf{\cite[\textnormal{Section 3}]{GooZha98}}) that 
\[ \int_{\Sp^{n-1}} f(u) \, dS_1(D, u) = 2^{-\frac{n-5}{2}} \kappa_{n-1} \tilde{f} \left(\frac{1}{\sqrt{2}} \right) + (n-2) \int_0^{\frac{1}{\sqrt{2}}} \tilde{f}(t) (1-t^2)^{\frac{n-2}{2}} \, dt , \] for any zonal $f \in C(\Sp^{n-1})$. From Theorem \ref{double_dual} and Corollary \ref{generating_characterization_axial} it follows that 
\[ (\mathcal{G}^{\infty})^{\ast} = \mathrm{cone} \{ \Psi_{\alpha, \beta}: 0< \alpha, \beta < 1 \} \subseteq \left ( C^{\infty}(\Sp^{n-1}, \bar{e}) \right)^{\ast} ,\] where $\Psi_{\alpha, \beta}$ are the functionals defined in Corollary \ref{generating_characterization_axial}.  Let $h_{\epsilon} \in C^{\infty}(\Sp^{n-1}, \bar{e})$ be non-negative with $\tilde{h}_{\epsilon}\left(\frac{1}{\sqrt{2}}\right) =\| h_{\epsilon} \| =1$ and let $\tilde{h}_{\epsilon}$ be supported on $\left[ \frac{1}{\sqrt{2}} - \epsilon, \frac{1}{\sqrt{2}}  + \epsilon \right]$. Then there exists a constant $C$ such that
 \begin{align*} \Psi_{\alpha, \beta} (h_{\epsilon}) &\leq \int_{\frac{1}{\sqrt{2}} - \epsilon}^{\frac{1}{\sqrt{2}}  + \epsilon} \psi_{\alpha, \beta} (t) \, dt  \\
 &\leq C \, \int_{\frac{1}{\sqrt{2}} - \epsilon}^{\frac{1}{\sqrt{2}} + \epsilon} \left(1- \frac{t^2}{\alpha^2}\right)^{-\frac{1}{2}} \chi_{(-\alpha, \alpha)}(t) \, dt. \\
 &\leq C \int_{1- 2 \epsilon}^{1} (1- t^2)^{-\frac{1}{2}} \, dt.  \end{align*}
We conclude that for every $\delta > 0$, there exists $\epsilon > 0$ such that $\Psi_{\alpha, \beta}(h_{\epsilon}) \leq \delta$ for all $0 < \alpha , \beta < 1$. Moreover, it is obvious that there exists an $\epsilon > 0$ such that $\Psi_{\alpha, \beta}(h_{\epsilon}) = 0$ if $\alpha \leq \frac{1}{4}$. If $S_1(D, \cdot)$ is the weak limit of positive combinations of the $\Psi_{\alpha, \beta}$ then, since $\Psi_{\alpha,\beta} (1) \geq c$ independend of $\alpha > \frac{1}{4}$ and $\beta$, we would obtain 
\[ \int h_{\epsilon}(u) \,dS_1(D,u) \leq \tilde{c} \delta .\]
However, since 
\[ \int_{\Sp^{n-1}} h_{\epsilon}(u) \, dS_1(D, u) \geq  2^{-\frac{n-5}{2}} \kappa_{n-1} ,\] we obtain $S_1(D, \cdot) \notin (\mathcal{G}^{\infty})^{\ast}$.
\end{proof}

\vspace{0.1cm}

\section{Minkowski Valuations}

In this final section we will prove Corollary \ref{corollary} and Theorem \ref{thm:main_theorem3}. 

\begin{lemma} \label{lem:convolution_integrable}
Let $f \in \mathrm{L}^1(\Sp^{n-1})$ be zonal and $\mu \in \mathcal{M}(\Sp^{n-1})$. Then $\mu \ast f \in \mathrm{L}^1(\Sp^{n-1})$ and 
\begin{equation} \label{eq:convolution_integrable} \mu \ast f \, (\theta \bar{e}) = \int_{\Sp^{n-1}} \theta f (u) \, d\mu(u) = \int_{\Sp^{n-1}} \tilde{ f}(u \cdot \theta \bar{e}) \, d\mu(u) \end{equation}
whenever the integral on the right-hand side exists (which is the case at almost every point).
\end{lemma}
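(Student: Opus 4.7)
The plan is a straightforward Fubini--Tonelli estimate followed by an identification with the measure-theoretic convolution via approximation by continuous zonal functions.

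First I would unravel the two right-hand sides: zonality of $f$ gives $f(v)=\tilde f(v\cdot\bar e)$, hence
\[ (\theta f)(u) = f(\theta^{-1}u) = \tilde f(\theta^{-1}u\cdot\bar e) = \tilde f(u\cdot\theta\bar e), \]
so the two expressions in \eqref{eq:convolution_integrable} agree pointwise wherever either is defined, and it suffices to work with the kernel $K(u,w):=\tilde f(u\cdot w)$. By the rotation invariance of spherical Lebesgue measure $\lambda$, for every fixed $u$ the map $w\mapsto K(u,w)$ is a rotated copy of $f$, so $\int |K(u,w)|\,dw=\|f\|_{\mathrm{L}^1}$. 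Tonelli then yields
\[ \int_{\Sp^{n-1}}\int_{\Sp^{n-1}} |K(u,w)|\, dw\, d(\mu_+ + \mu_-)(u) \;=\; \|\mu\|_{\mathrm{TV}}\,\|f\|_{\mathrm{L}^1} \;<\; \infty, \]
and Fubini shows that for $\lambda$-a.e.\ $w$ the integral $g(w):=\int K(u,w)\,d\mu(u)$ converges absolutely and defines $g\in\mathrm{L}^1(\Sp^{n-1})$ with $\|g\|_{\mathrm{L}^1}\le\|\mu\|_{\mathrm{TV}}\|f\|_{\mathrm{L}^1}$.

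It remains to identify $g$ with $\mu\ast f$ in the paper's sense, where $f$ is tacitly identified with the absolutely continuous measure $f\,d\lambda$. I would argue by approximation: choose zonal $f_k\in C(\Sp^{n-1})$ with $f_k\to f$ in $\mathrm{L}^1$. For each $k$, the convolution $\mu\ast f_k\in C(\Sp^{n-1})$ is defined pointwise by the Section~2.2 formula, and applying the same Fubini bound to $f_k-f$ gives
\[ \|\mu\ast f_k - g\|_{\mathrm{L}^1} \;\le\; \|\mu\|_{\mathrm{TV}}\,\|f_k-f\|_{\mathrm{L}^1} \;\xrightarrow[k\to\infty]{}\; 0. \]
On the other hand, continuity of the measure-measure convolution forces $\mu\ast (f_k\,d\lambda) \to \mu\ast(f\,d\lambda)$, and for continuous $f_k$ the absolutely continuous measure $(\mu\ast f_k)\,d\lambda$ coincides with the pointwise-defined convolution. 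Combining these observations gives $g\,d\lambda = \mu\ast(f\,d\lambda)$, proving the lemma.

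The main obstacle is this final identification: one has to verify that the two incarnations of ``$\mu\ast f$'' in the paper (the pointwise integral formula for continuous zonal $f$, and the duality definition $\int h\,d(\mu\ast\nu)=\int(\nu\ast h)\,d\mu$ for $\nu=f\,d\lambda$) genuinely agree on their common domain and commute with the $L^1$-approximation. The rest, including the $SO(n-1)$-bookkeeping implicit in the paper's definition of spherical convolution, is routine given Section~2.2.
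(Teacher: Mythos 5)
Your proposal is correct and follows essentially the same route as the paper: establish the $L^1$-bound $\|\mu\ast f\|_{\mathrm{L}^1}\le\|\mu\|_{\mathrm{TV}}\|f\|_{\mathrm{L}^1}$, approximate $f$ in $\mathrm{L}^1$ by continuous zonal functions, and identify the $L^1$-limit with the duality-defined convolution $\mu\ast(f\,d\lambda)$ via weak convergence. Your Fubini--Tonelli setup actually tidies a small sketchiness in the paper's version --- the paper asserts that $\mu\ast f_i$ converges pointwise a.e., whereas in general $L^1$-convergence only forces a subsequence to do so; your direct argument that $\int\tilde f(u\cdot w)\,d\mu(u)$ is absolutely convergent for a.e.\ $w$ sidesteps this.
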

\begin{proof}
Consider the operator $f \mapsto \mu \ast f$, defined on the space of continuous functions $C(\Sp^{n-1})$. It is not hard to show that 
\[ \int_{\Sp^{n-1}} | \mu \ast f | (u) \, du \leq |\mu| \, |f|_{\mathrm{L}^1}. \] Thus, the convolution with $\mu$ is continuous on $C(\Sp^{n-1})$ in the $\mathrm{L}^1$-norm. Let now $f \in \mathrm{L}^1(\Sp^{n-1})$ and $f_i \in C(\Sp^{n-1})$ such that $f_i \to f$ in the $\mathrm{L}^1$-norm. Then $\mu \ast f_i$ converges in $\mathrm{L}^1(\Sp^{n-1})$ and, since the convergence in the $\mathrm{L}^1$-norm also implies weak convergence, we have
\[ \mu \ast f = \lim_{i \to \infty} \mu \ast f_i \in \mathrm{L}^1(\Sp^{n-1}). \]
Moreover, we know that $\mu \ast f_i$ converges point-wise for almost every $u \in \Sp^{n-1}$. Obviously the limit is given by the right-hand side of (\ref{eq:convolution_integrable}). 
\end{proof}

\begin{corollary}
Let $\Phi \in \MVal_j$. Then there exists a unique zonal $f \in \mathrm{L}^1(\Sp^{n-1})$ such that 
\[ h_{\Phi K} = S_j(K, \cdot) \ast f \]
for every $K \in \K$. Moreover, there exists a unique zonal measure $\mu \in \mathcal{M}(\Sp^{n-1})$ such that $f = \mu \ast \tilde{g_n}$.
\end{corollary}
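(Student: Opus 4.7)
The plan is to deduce the corollary from Theorem \ref{thm:main_theorem} via Berg's formula (\ref{eq:berg_function}), handling $j=1$ directly and then bootstrapping to general $j$ via Theorem \ref{thm:hadwiger} together with a polarization argument.

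For $j=1$ the argument is essentially a rewriting. By Theorem \ref{thm:main_theorem}, the Minkowski endomorphism $\Phi$ is uniformly continuous, so Theorem \ref{thm:kiderlen_representation} upgrades the generating distribution to a unique zonal measure $\mu \in \mathcal{M}(\Sp^{n-1})$ with $h_{\Phi K} = h_K \ast \mu$. Substituting Berg's identity $h_K = S_1(K,\cdot) \ast \bergfun$, taken distributionally via (\ref{eq:berg_function}), and using associativity together with commutativity of convolution for zonal objects gives
\[
 h_{\Phi K} = S_1(K,\cdot) \ast (\mu \ast \bergfun) = S_1(K,\cdot) \ast f,
\]
where $f := \mu \ast \bergfun$ is zonal. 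Since $\bergfun \in \mathrm{L}^1(\Sp^{n-1})$, Lemma \ref{lem:convolution_integrable} places $f$ in $\mathrm{L}^1(\Sp^{n-1})$.

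For $1 < j \leq n-1$ I would proceed in two stages. First, Theorem \ref{thm:hadwiger} applied to $\Phi$, with only the $j$-homogeneous summand surviving, yields a zonal measure $\mu_j$ with $h_{\Phi K} = S_j(K,\cdot) \ast \mu_j$. Second, I would extract a companion Minkowski endomorphism $\Psi \in \MVal_1$ from $\Phi$ by partial polarization at $B^n$, setting $\Psi(K) := \bar{\Phi}(K, B^n, \ldots, B^n)$, where $\bar{\Phi}$ is the symmetric $j$-linear (with respect to Minkowski addition) polarization of the $j$-homogeneous valuation $\Phi$. This $\Psi$ is automatically continuous, translation invariant, $SO(n)$-equivariant, Minkowski-additive, and $1$-homogeneous; once it is known to take values in $\K$, Theorem \ref{thm:main_theorem} supplies a zonal generating measure $\mu$, and matching the coefficient of $t^{j-1}$ in the Steiner-type expansions of $h_{\Phi(K + tB^n)}$ and $S_j(K + tB^n, \cdot) \ast \mu_j$ identifies $\mu_j$ with $\mu \ast \bergfun$ up to an explicit normalizing constant absorbed into $\mu$. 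Uniqueness of $f$ then follows from weak-$\ast$ density of $\{S_j(K,\cdot) : K \in \Kr\}$ among zonal measures, and uniqueness of $\mu$ from injectivity of convolution with $\bergfun$ (inverse $\Box_n$).

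The principal obstacle will be verifying that the partial polarization $\Psi$ actually lands in $\K$ rather than in the ambient space of formal differences of support functions, i.e.\ checking the convexity condition (\ref{eq:convexity_condition}) for $h_{\Psi K}$. Without this, Theorem \ref{thm:main_theorem} is unavailable and the upgrade from measure to $\mathrm{L}^1$-density cannot be effected. I expect this step to require exploiting the pointwise convexity of $h_{\Phi(K+tB^n)}$ on $\Sp^{n-1}$ for every $t \geq 0$ and isolating the linear-in-$t$ coefficient via the standard McMullen polynomial extraction for $j$-homogeneous valuations.
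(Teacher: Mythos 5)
Your plan is essentially the paper's own proof, phrased slightly differently. What you call the ``partial polarization at $B^n$'' of a $j$-homogeneous $\Phi$ is precisely the $(j-1)$-fold iterate of the derivation operator $\Lambda$, defined by $h_{\Lambda\Phi(K)} = \frac{d}{dt}\big|_{t=0}\,h_{\Phi(K+tB^n)}$, that the paper uses. The structure is identical: obtain $h_{\Phi K}=S_j(K,\cdot)\ast\nu$ from Theorem~\ref{thm:hadwiger}, pass to the degree-one derived valuation $\Lambda^{j-1}\Phi$ for which $h_{\Lambda^{j-1}\Phi(K)}=h_K\ast\Box_n\nu$, conclude from Theorem~\ref{thm:main_theorem} (via Theorem~\ref{thm:kiderlen_representation}) that $\Box_n\nu$ is a measure $\mu$, invert $\Box_n$ through Berg's identity (\ref{eq:berg_function}) to get $\nu=\mu\ast\bergfun$, and finally apply Lemma~\ref{lem:convolution_integrable} to see $\nu$ has an $\mathrm{L}^1$ density.

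The one point where you are right to hesitate is exactly where the paper leans on a citation rather than a proof: the fact that $\Lambda^{j-1}\Phi$ actually takes values in $\K$, i.e.\ that $\Lambda$ maps $\MVal$ into $\MVal$. This is the Steiner formula for Minkowski valuations of Parapatits and Schuster (used again in \cuscite{SchuWann15, SchuWann16}), and it is a genuinely non-trivial theorem; it does not follow from ``pointwise convexity of $h_{\Phi(K+tB^n)}$ plus McMullen coefficient extraction,'' since extracting a polynomial coefficient does not by itself preserve convexity. So your instinct that this is the principal obstacle is correct, but the resolution is to cite the existing result rather than to prove it ad hoc. Once that is granted, the rest of your outline reproduces the paper's argument faithfully, including the uniqueness discussion (injectivity of convolution with $\bergfun$ is just injectivity of $\Box_n^{-1}$).
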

\begin{proof}
From Theorem \ref{thm:hadwiger} it follows that 
$h_{\Phi K} = S_j(K, \cdot) \ast \nu$ for some measure $\nu \in \mathcal{M}(\Sp^{n-1})$.
Let $\Lambda: \MVal \to \MVal$ denote the derivation operator (cf. \cuscite{SchuWann15}) defined by
\[ h_{\Lambda\Phi (K)} (K) = \left.\frac{d}{dt}\right\vert_{t = 0} \, h_{\Phi (K + t B^n)} . \] It is then not too hard to show (see \cuscite{SchuWann16}), that, for $K \in \mathcal{K}^2_{+}$,
\[ h_{\Lambda^{j-1} \Phi (K)} = S_1(K, \cdot) \ast \nu = h_K \ast \Box_n \nu . \]
Here it is used that the domain of the $\Box_n$ operator and equations (\ref{eq:box_convolution}) and (\ref{eq:first_area_measure}) can be extended to distributions on the sphere (see \cuscite{SchuWann16}).
However, since $\Lambda^{j-1} \Phi \in \MVal_1$, it follows from Theorem \ref{thm:main_theorem} and (\ref{eq:berg_function}) that $\nu = \mu \ast g_n$ for some measure $\mu \in \mathcal{M}(\Sp^{n-1})$. It remains to show that $\nu = f \, du$ with $f \in \mathrm{L}^1(\Sp^{n-1})$. However, this immediately follows from Lemma \ref{lem:convolution_integrable}.

\end{proof}

The proof of Theorem \ref{thm:main_theorem3} is based on the following Proposition that introduces a new construction for $SO(n)$-equivariant Minkowski valuations.

\begin{proposition}  \label{prop:minkowski_valuations}
Let $\phi \in \mathbf{Val}$ and $L \in \K$ and let 
\begin{equation}
\Psi_{L, \phi} K \, (u) := \int_{SO(n)} \phi(\theta^{-1} K) \, h_{\theta L} (u) \, d\theta,
\end{equation}
for $u \in \Sp^{n-1}$. Then 
\begin{enumerate}[label=(\alph*)]
\item $\Psi_{L, \phi}: \K \to C(\Sp^{n-1})$ is a  continuous, translation-invariant and  $SO(n)$-equivariant valuation.
\item Let $\phi \geq 0$. Then $h_{\Phi_{L, \phi} K}  = \Psi_{L, \phi} K$ defines a continuous, translation invariant and $SO(n)$-equivariant Minkowski valuation $\Phi_{L, \phi}: \K \to \K$.
\item Let $\phi \in \Val^{+}_1$ be $SO(n-1)$-invariant and $L$ be origin symmetric. Then 
\begin{equation} \label{new_valuations_convolution}
\mathrm{C} \, (\Psi_{L, \phi} K ) =  h_K  \ast h_L \ast \mathrm{Kl}_{\phi}.
\end{equation}
\end{enumerate}

\end{proposition}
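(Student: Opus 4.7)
For parts (a) and (b) the required checks are routine. In (a), the integrand $\theta \mapsto \phi(\theta^{-1}K)\, h_{\theta L}(u)$ is jointly continuous on the compact group $SO(n)$, so $\Psi_{L,\phi}K$ is continuous in $K$ by dominated convergence. The valuation and translation-invariance properties are inherited pointwise from $\phi$ (the map $K \mapsto \phi(\theta^{-1}K)\, h_{\theta L}(u)$ is, for fixed $\theta$ and $u$, a real-valued valuation, a property preserved by integration), and $SO(n)$-equivariance would follow from the substitution $\theta = \eta \sigma$ in the Haar integral together with $h_{\eta \sigma L}(u) = h_{\sigma L}(\eta^{-1}u)$ and right-invariance of Haar measure. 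For (b), when $\phi \geq 0$, $\Psi_{L,\phi}K$ is a non-negative integral mixture of the support functions $h_{\theta L}$, hence the support function of the Minkowski integral $\Phi_{L,\phi}(K) := \int_{SO(n)} \phi(\theta^{-1}K)\, \theta L\, d\theta \in \K$; the required properties of $\Phi_{L,\phi}$ then transfer from (a).

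For (c), the main work begins. I would first move $\mathrm{C}$ inside the $SO(n)$-integral via Fubini and the $SO(n)$-equivariance of the cosine transform, obtaining
\[ \mathrm{C}(\Psi_{L,\phi} K)(x) = \int_{SO(n)} \phi(\theta^{-1}K)\, (\mathrm{C} h_L)(\theta^{-1} x)\, d\theta, \]
and then reduce to the case of smooth $\phi$ by density of smooth valuations in $\Val$ and continuity of both sides in $\phi$. For smooth $\phi \in \Val^+_1$, Proposition \ref{prop:valuations} provides $f_\phi \in C^\infty(\Sp^{n-1})$ with $\phi(M) = \int h_M(u) f_\phi(u)\, du$; averaging over $SO(n-1)$ (permissible because $\phi$ is $SO(n-1)$-invariant) and under $-\mathrm{id}$ (by evenness), $f_\phi$ may be taken zonal and even, with $\mathrm{Kl}_\phi = \mathrm{C} f_\phi$. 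Substituting $\phi(\theta^{-1}K) = \int h_K(w)\, f_\phi(\theta^{-1} w)\, dw$ and swapping integrals yields
\[ \mathrm{C}(\Psi_{L,\phi} K)(x) = \int_{\Sp^{n-1}} h_K(w)\, J(x,w)\, dw, \qquad J(x, w) := \int_{SO(n)} f_\phi(\theta^{-1} w)\, (\mathrm{C} h_L)(\theta^{-1} x)\, d\theta. \]

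The final step is to identify $J$. By left-invariance of Haar, $J(\eta x, \eta w) = J(x, w)$ for every $\eta \in SO(n)$, so $J(x,w)$ depends only on the inner product $x \cdot w$. I would evaluate at $w = \bar e$ and push the Haar integral via $\theta \mapsto \theta^{-1}\bar e$ to an integral over $\Sp^{n-1}$, using the zonality of $f_\phi$; this identifies $J$ (up to a constant depending only on $n$) as the zonal function associated with $h_L \ast \mathrm{Kl}_\phi$. Reassembling via associativity of spherical convolution then gives $\mathrm{C}(\Psi_{L,\phi} K) = h_K \ast h_L \ast \mathrm{Kl}_\phi$. The main obstacle is the bookkeeping of this three-fold convolution: because only $\mathrm{Kl}_\phi$ is zonal, and the paper's convolution of two non-zonal measures is defined by duality against zonal test functions, associativity and the implicit $SO(n-1)$-symmetrization must be tracked carefully when matching the Haar kernel $J$ against $h_L \ast \mathrm{Kl}_\phi$; the origin symmetry of $L$ (so that $h_L$ is even) ensures that the objects live in the subspace on which $\mathrm{C}$ is injective, making the identification unambiguous.
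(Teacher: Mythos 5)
Your treatment of (a) and (b) coincides with the paper's; for (c) you take a more explicit, kernel-based route that is a valid unpacking of the same underlying argument, but it differs enough in presentation to be worth comparing. The paper's proof of (c) is shorter because it works at the level of the convolution formalism: after invoking Proposition~\ref{prop:valuations} to write $\phi(\theta^{-1}K) = \int_{\Sp^{n-1}} f_\phi(\theta^{-1}u)\,h_K(u)\,du$, it immediately recognizes this as $(h_K \ast f_\phi)(\theta\bar e)$, reads off $\Psi_{L,\phi}K = h_K \ast f_\phi \ast h_L = h_K \ast h_L \ast f_\phi$ from the $SO(n)$-integral, and only then applies $\mathrm{C}$, using \eqref{eg:cosine_convolution} and $\mathrm{C}f_\phi = \mathrm{Kl}_\phi$; the non-smooth case is handled by approximation exactly as you propose. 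Your version instead pushes $\mathrm{C}$ under the Haar integral first and then exhibits the resulting operator on $h_K$ through an explicit kernel $J(x,w)$ whose $SO(n)$-invariance forces it to be a function of $x \cdot w$; this is a perfectly sound way to \emph{verify} the convolution identity, at the cost of having to match $J$ against the paper's spherical convolution by hand. Two caveats. First, your hedge "up to a constant depending only on $n$" signals a real bookkeeping issue: the Haar integral over $SO(n)$ is taken with respect to the probability Haar measure while the spherical integrals use the unnormalized Hausdorff measure, so the passage from $\int_{SO(n)}(\cdot)\,d\theta$ to $\int_{\Sp^{n-1}}(\cdot)\,du$ introduces a factor of $\omega_n$ that must be shown to cancel against the conventions built into the spherical convolution; this needs to be carried out rather than left open, since the target identity \eqref{new_valuations_convolution} has no free constant. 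Second, your closing remark that the origin symmetry of $L$ is needed "so that the objects live in the subspace on which $\mathrm{C}$ is injective" is a red herring: neither your argument nor the paper's ever inverts $\mathrm{C}$; you are computing $\mathrm{C}(\Psi_{L,\phi}K)$ directly, and the evenness of $h_L$ plays a more mundane role (it keeps the formula clean since $\mathrm{Kl}_\phi = \mathrm{C}f_\phi$ is automatically even and the cosine transform annihilates odd parts).
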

\begin{proof} 

For the $SO(n)$-equivariance of $\Psi_{L, \phi}$, let $\vartheta \in SO(n)$. Then
\begin{align*}
\Psi_{L, \phi} (\vartheta K) (u)  &= \int_{SO(n)} \phi \left( \theta^{-1} \vartheta K \right) h_{\theta L} (u) \, d\theta  .
\end{align*} By substituting $\theta = \vartheta \eta$, the right-hand side is further equal to
\begin{align*} \int_{SO(n)} \phi(\eta^{-1} K) h_{\vartheta \eta L} (u) \, d\eta &= \int_{SO(n)} \phi(\eta^{-1} K) h_{ \eta L} ( \vartheta^{-1} u) \, d\eta \\
&= \vartheta \left(\Psi_{L, \phi} K \right) (u). \end{align*}
The other properties in $(a)$ are obvious. Statement $(b)$ immediately follows from the fact that the class of support functions is a closed convex cone in $C(\Sp^{n-1})$.
For $(c)$, let  $\phi \in \Val^{+, \infty}_1$ be $SO(n-1)$-invariant. Then, since $\phi$ is smooth, by Proposition \ref{prop:valuations} there exists $f_{\phi}$ such that
\begin{align*} \phi(\theta^{-1} K) =  \int_{\Sp^{n-1}} h_{\theta^{-1}K} \, (u) \, f_{\phi}(u) \, du = \int_{\Sp^{n-1}} f_{\phi}(\theta^{-1} u) \, h_K(u) du = f_{\phi} \ast h_K \, (\theta \bar{e}) .
\end{align*}
It follows that 

\[ \Psi_{L, \phi} K =  h_K \ast f_{\phi} \ast h_L  = h_K \ast h_L \ast f_{\phi} .\] Since $\mathrm{C} f_{\phi} = \mathrm{Kl}_{\phi}$, we can finish the proof by approximation.
\end{proof}

\begin{theorem}
If $n\geq 3$, then there exists a continuous, even, translation-invariant and $SO(n)$-equivariant,  Minkowski valuation $\Phi: \K \to \K$ which cannot be decomposed into a sum of homogeneous Minkowski valuations.
\end{theorem}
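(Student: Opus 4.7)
The strategy is to apply Proposition~\ref{prop:minkowski_valuations} with a judicious pair $(L, \phi)$ and derive a contradiction from Theorem~\ref{thm:main_theorem2}. Let $L$ be the body supplied by Theorem~\ref{thm:main_theorem2}, an origin-symmetric strictly convex smooth body of revolution whose generating function $\rho_L$ is not a generating function of any even Minkowski endomorphism. I will choose an even, $SO(n-1)$-invariant, non-negative $\phi \in \Val$ whose McMullen decomposition $\phi = \phi_0 + \phi_1 + \dots + \phi_n$ has $1$-homogeneous summand $\phi_1 \in \Val_1$ with Klain function $\KL_{\phi_1}$ as close as desired, in the weak-$\ast$ sense, to $\tfrac{1}{2}(\delta_{\bar{e}} + \delta_{-\bar{e}})$. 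Setting $\Phi := \Phi_{L, \phi}$, Proposition~\ref{prop:minkowski_valuations}(a)--(b) guarantees that $\Phi$ is a continuous, even, translation-invariant, $SO(n)$-equivariant Minkowski valuation.

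Suppose, for contradiction, that $\Phi = \sum_j \Phi^{(j)}$ with $\Phi^{(j)} \in \MVal_j$. For each fixed $u \in \Sp^{n-1}$, McMullen's theorem applied to the scalar valuation $K \mapsto h_{\Phi K}(u)$ yields a unique decomposition into homogeneous summands, which must coincide with the natural one induced by $\phi$: $h_{\Phi^{(j)} K} = \Psi_{L, \phi_j}(K)$ for every $j$ and every $K \in \K$. In particular, $\Phi^{(1)}$ is an even Minkowski endomorphism, so by Theorem~\ref{thm:main_theorem} it admits a zonal generating measure $\nu$ with $h_{\Phi^{(1)} K} = h_K \ast \nu$. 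Applying $\costrans$ to this identity, using Proposition~\ref{prop:minkowski_valuations}(c) to rewrite $\costrans \Psi_{L, \phi_1}(K)$ and the commutativity~(\ref{eg:cosine_convolution}) to rewrite $\costrans(h_K \ast \nu)$, and cancelling $h_K$ (justified because every $C^2$ function on $\Sp^{n-1}$ is a difference of support functions of elements of $\K$) gives $\costrans \nu = h_L \ast \KL_{\phi_1}$. Substituting $h_L = \costrans \rho_L$, commuting zonal convolutions via~(\ref{eg:cosine_convolution}), and invoking injectivity of $\costrans$ on even zonal distributions produces the clean identity $\nu = \rho_L \ast \KL_{\phi_1}$. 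For $\KL_{\phi_1}$ sharp enough, this $\nu$ lies arbitrarily close to $\rho_L$ in $C(\Sp^{n-1})$; since the cone of generating functions of even Minkowski endomorphisms is closed in $C(\Sp^{n-1})$ (as established in the proof of Theorem~\ref{thm:main_theorem2}) and $\rho_L$ lies outside it, so must $\nu$---contradicting the fact that $\nu$ is the generating measure of $\Phi^{(1)}$.

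The main obstacle is producing a single $\phi \geq 0$ whose $1$-homogeneous summand $\phi_1$ has such a sharply peaked Klain function. Every translation-invariant, even, $1$-homogeneous, non-negative valuation on $\K$ is a mixed volume $V(K, M[n-1])$ with $M$ origin-symmetric, whose Klain function is the cosine transform of the non-negative surface area measure $S_{n-1}(M, \cdot)$ and therefore cannot be arbitrarily concentrated near $\pm \bar{e}$. Consequently $\phi_1$ must be permitted to take negative values, and these negative excursions must be dominated by non-negative homogeneous contributions of other degrees on every scale of $K \in \K$. This balancing problem is precisely where the result of Parapatits and Wannerer~\cuscite{ParWan13} enters: it supplies a non-negative scalar prototype with a non-trivial concentrated low-degree McMullen summand, which can then be plugged into Proposition~\ref{prop:minkowski_valuations}. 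Once $\phi$ is assembled, the remainder of the argument proceeds exactly as above.
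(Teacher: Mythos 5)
Your argument is correct and follows essentially the same route as the paper: it constructs $\Phi=\Phi_{L,\phi}$ via Proposition~\ref{prop:minkowski_valuations}, borrows the Parapatits--Wannerer positivity correction to make the underlying scalar valuation non-negative while keeping a sharply peaked $1$-homogeneous Klain function, identifies the would-be generating measure of the $1$-homogeneous piece as $\rho_L\ast\mathrm{Kl}_{\phi_1}$, and contradicts Theorem~\ref{thm:main_theorem2}. The only (cosmetic) difference is the way the contradiction is extracted: you fix one sufficiently sharp $\phi$ and use closedness of the cone of generating functions in $C(\Sp^{n-1})$ to place $\nu=\rho_L\ast\mathrm{Kl}_{\phi_1}$ outside it, whereas the paper assumes decomposability across the whole family $\{\phi_\epsilon\}$, lets $\epsilon\to 0$, and concludes that $\rho_L$ itself would have to generate a Minkowski endomorphism -- both versions are valid and rest on the same ingredients.
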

\begin{proof}
Let $\varphi_{\epsilon} \in \Val_1^{+}$ be $SO(n-1)$-invariant given by  $\mathrm{Kl}\varphi_{\epsilon} = g_{\epsilon}$, where $g_{\epsilon} \in C^{\infty}(\Sp^{n-1})$ is even, non-negative, zonal and converges weakly to $\frac{1}{2} \left( \delta_{\bar{e}} + \delta_{-\bar{e}} \right) $. In \textbf{\cite[\textnormal{Lemma 5.1.}]{ParWan13}}, it was shown that there exist  constants $c_{\epsilon}, d_{\epsilon}$ such that $\phi_{\epsilon} := c_{\epsilon} + \varphi_{\epsilon} + d_{\epsilon} V_2$ is a positive valuation. Therefore, by Proposition \ref{prop:minkowski_valuations} (b), we know that $h_{\Phi_{L, \phi_{\epsilon}}K} = \Psi_{L, \phi_{\epsilon}} K$ defines an $SO(n)$-equivariant Minkowski valuation $\Phi_{L, \phi_{\epsilon}}$ for all $L \in \K$. Clearly, the $1$-homogeneous component of $\Psi_{L, \phi_{\epsilon}}$ is given by $\Psi_{L, \varphi_{\epsilon}} $.   Let us assume $\Psi_{L, \varphi_{\epsilon}} K$ is a support function for every $\epsilon >0$ and $K, L \in \K$. By (\ref{new_valuations_convolution}), we have
\[ \mathrm{C} \, (\Psi_{L, \varphi_{\epsilon}} K)  =  h_{K} \ast h_L \ast \mathrm{Kl}\varphi_{\epsilon}. \] Thus,
\[ \mathrm{C}^{-1} h_K \ast h_L \] has to be a support function for all convex bodies $K$ and $L$. In particular, this implies that 
\[ h_K \ast \rho_L \] is a support function for all $K \in \K$ and generalized zonoids $L \in \K$. Consequently, $\rho_L$ would have to be the generating measure of a Minkowski endomorphism for every generalized zonoid $L \in \K$. By Theorem \ref{necessary_not_sufficient} this cannot be true. 

\end{proof}

\noindent {{\bf Acknowledgments} The work of the author was
supported by the European Research Council (ERC), Project number: 306445, and the Austrian Science Fund (FWF), Project number:
Y603-N26.

\noindent Vienna University of Technology \par \noindent Institute of Discrete
Mathematics and Geometry \par \noindent Wiedner Hauptstra\ss e 8--10/1047
\par \noindent A--1040 Vienna, Austria

\vspace{0.2cm}

\par \noindent felix.dorrek@tuwien.ac.at
\end{document}